\newcommand{\tauktrial}{\tau_{k,\text{trial}}}
\newcommand{\finf}{f_{\text{inf}}}
\newcommand{\fsup}{f_{\text{sup}}}
\newcommand{\Phibar}{\bar\Phi}
\newcommand{\alphamin}{\alpha_{\min}}
\newcommand{\taumin}{\tau_{\min}}
\newcommand{\sigmamin}{\sigma_{\min}}
\newcommand{\lipg}{L_{g}}
\newcommand{\lipJ}{L_{J}}
\newcommand{\kappabar}{\bar\kappa}
\newcommand{\kappag}{\kappa_{\nabla f}}
\newcommand{\kappagr}{\kappa_{\partial r}}
\newcommand{\kappac}{\kappa_{c}}
\newcommand{\kappav}{\kappa_{v}}
\newcommand{\kappaJ}{\kappa_J}
\newcommand{\kappayz}{\kappa_{yz}}
\newcommand{\kappaPhi}{\kappa_{\Phi}}
\newcommand{\taubar}{\bar\tau}
\newcommand{\kappabarPhi}{\bar\kappa_{\Phi}}
\newcommand{\alphabar}{\bar\alpha}
\newcommand{\alphabarmin}{\bar\alpha_{\min}}
\newcommand{\xstar}{x_*}
\newcommand{\chibar}{\bar\chi}
\newcommand{\fus}{f_{\text{Algorithm~\ref{alg:main}}}}
\newcommand{\fBazinga}{f_{\text{Bazinga}}}
\newcommand{\nlim}{{n_{\Lcal}}}
\newcommand{\epsminL}{\epsilon^\Lcal_{\min}}
\newcommand{\sigmaminL}{\sigma^\Lcal_{\min}}
\newcommand{\deltamin}{\delta_{\min}}
\newcommand{\deltaminL}{\delta_{\min}^\Lcal}
\newtheorem{assumption}{Assumption}[section]
\newif\iftechreport
\title{A Proximal-Gradient Method for Solving Regularized Optimization Problems with General Constraints\footnotemark[1]}
\author{Frank E. Curtis\footnotemark[2]
   \and Xiaoyi Qu\footnotemark[2]
   \and Daniel P.~Robinson\footnotemark[2]}
\date  {\today}
\begin{document}

\maketitle

\renewcommand{\thefootnote}{\fnsymbol{footnote}}
\footnotetext[1]{This material is based upon work supported by the U.S. National Science Foundation, Division of Mathematical Sciences, Computational Mathematics Program under Award Number DMS--1016291.}
\footnotetext[2]{Department of Industrial and Systems Engineering, Lehigh University, Bethlehem, PA, USA; E-mail: \email{\{frank.e.curtis, xiq322, daniel.p.robinson\}@lehigh.edu}}
\renewcommand{\thefootnote}{\arabic{footnote}}

\begin{abstract}
We propose, analyze, and test a proximal-gradient method for solving regularized optimization problems with general constraints. The method employs a decomposition strategy to compute trial steps and uses a merit function to determine step acceptance or rejection. Under various assumptions, we establish a worst-case iteration complexity result, prove that limit points are first-order KKT points, and show that manifold identification and active-set identification properties hold. Preliminary numerical experiments on a subset of the CUTEst test problems and sparse canonical correlation analysis problems demonstrate the promising performance of our approach.
\end{abstract}

\begin{keywords}
proximal-gradient method,
nonlinear optimization, nonconvex optimization, worst-case iteration complexity, regularization, composite optimization, constrained optimization
\end{keywords}

\begin{AMS}
  49M37, 65K05, 65K10, 65Y20, 68Q25, 90C30, 90C60
\end{AMS}

\numberwithin{equation}{section}
\numberwithin{theorem}{section}

\section{Introduction}\label{sec:introduction}
We consider the constrained optimization problem
\begin{equation}~\label{prob:general}
\min_{x\in\R{n}} \ f(x) + r(x) \ \ \text{subject to (s.t.)}  \ \ c(x) = 0, \ x \in \Omega,
\end{equation}
where $f:\R{n} \to \R{}$ is continuously differentiable, $r: \R{n} \to [0,\infty)$ is a nonnegative-valued convex function (possibly nonsmooth),   $c:\R{n} \to \R{m}$ is continuously differentiable with $m \leq n$, and $\Omega$ is the nonnegative orthant in $\R{n}$ (i.e., the vectors in~$\R{n}$ with all nonnegative components). We note that general inequality constraints can be converted to the form~\eqref{prob:general} by using slack variables. Thus, problem~\eqref{prob:general} is important to a range of application areas such as data science (e.g., principal component analysis~\cite{zou2018selective} and canonical correlation analysis~\cite{witten2009penalized,yang2019survey}), finance (e.g., portfolio selection~\cite{bai2016splitting,cui2013convex}), signal processing (e.g., sparse blind deconvolution~\cite{zhang2017global} and array beamformer design~\cite{hamza2019hybrid,huang2023sparse}), and image processing (e.g., hyperspectral unmixing~\cite{chouzenoux2020proximal}).

When the constraints in~\eqref{prob:general} are not present, the problem reduces to a nonsmooth unconstrained regularized optimization problem, for which proximal-gradient (PG) methods and their variants are among the most widely used algorithms~\cite{BecT09,beck2017first,CheCR18,CheCR17,KariNutiSchm16,lee2019inexact}. 
The basic PG method proceeds by  solving a sequence of proximal subproblems. Given the $k$th iterate $x_k \in \R{n}$ and  proximal parameter $\alpha_k > 0$, the next iterate $x_{k+1}$ is computed as the unique solution to the optimization problem
\begin{equation}
\min_{x \in \mathbb{R}^n} \left\{ \tfrac{1}{2\alpha_k} \|x - (x_k - \nabla f(x_k))\|_2^2 + r(x) \right\}.
\label{eq:prox_def}
\end{equation}
A notable property of PG methods is that as $\alpha_k \to 0$, the vector $x_{k+1} - x_k$ converges to zero. PG methods are also well-known for their \emph{structure identification} property~\cite{lee2023accelerating, nutini2019active,sun2019we}, whereby the sequence of iterates eventually identifies the manifold associated with a solution (e.g., the zero-nonzero structure of an optimal solution when $r(x) = \|x\|_1$). This property is particularly advantageous in structured optimization problems for at least three reasons.  First, identifying the correct solution structure can have significant computational savings.  For example, when $r(x) = \|x\|_1$, it is well known that optimal solutions tend to be sparser, and in the context of statistical modeling sparser solutions offer simpler models that can be employed more efficiently~\cite{han2015learning,hoefler2021sparsity}.  Second, in certain other applications, the zero-nonzero values of the variables can have a physical meaning that is lost if the solutions do not have the true zero-nonzero structure~\cite{dorfler2014sparsity,fardad2011sparsity,tonneau2020sl1m}.  Third, if the manifold of the solution can be identified, then one can consider hybrid methods that combine PG calculations with those of more advanced (usually higher-order) optimization algorithms designed for \emph{smooth} optimization problems (here restricted to the smooth manifold identified by the PG iterates). Such an approach aims to exploit local smoothness to achieve accelerated convergence rates, and has great success in many settings~\cite{bareilles2023newton, lee2023accelerating,liu2025proximal}. 

When the regularization function $r$ is not present in problem~\eqref{prob:general}, it reduces to a traditional nonlinear program. An important concept in the nonlinear programming literature is \emph{active-set identification}.  An algorithm has the active-set identification property if, under certain reasonable assumptions, it can identify from an iterate near an optimal solution which inequality constraints are active (i.e., hold at equality) at that optimal solution. For a comprehensive overview of active-set identification strategies in nonlinear programming, see~\cite{facchinei1998accurate,oberlin2006active} and the references therein.

Little research has considered the case when the regularization function $r$ and nonlinear constraints are present. Two primary challenges arise in this setting. First, the computation of projections onto the feasible points satisfying $c(x) = 0$ (or perhaps the intersection of this region with $\Omega$) is typically computationally intractable. Second, conventional techniques such as penalty-based methods~\cite{de2024interior} may fail to preserve the structure of the solution (see~\cite[Section 5]{dai2024proximal}), therefore limiting their effectiveness in this setting. Our work is  motivated by the need to address these challenges.

\subsection{Related work}\label{sec:related-work}
We restrict our attention to work that considers regularized optimization problems with smooth nonlinear constraints, where both the smooth part of the objective and the constraints may be nonconvex. Most approaches are penalty-function-based, where constrained problems are transformed into unconstrained ones (or ones with simple constraints) by combining the objective function with a penalty function that measures constraint violation. The resulting subproblems are then typically solved using the PG method or its variants. Penalty-based methods generally fall into two main categories: augmented Lagrangian methods and penalty-barrier methods. Among these, \cite{bourkhissi2025complexity,li2021rate,sahin2019inexact} propose inexact augmented Lagrangian methods and show that an $\epsilon$-KKT point can be found within $\Ocal(\epsilon^{-3})$ iterations under suitable constraint qualifications. The constraint qualifications in~\cite{li2021rate,sahin2019inexact} are identical, whereas~\cite{bourkhissi2025complexity} uses a slightly different condition, replacing the subdifferential with the horizon subdifferential. In contrast, the augmented Lagrangian method in~\cite{hallak2023adaptive} adopts a transversality condition and establishes a better complexity bound of $\Ocal(\epsilon^{-2})$. In~\cite{de2023constrained}, an augmented Lagrangian method is proposed for solving regularized problems with general constraints.  The authors use an AM-regularity condition to establish convergence, but no complexity result is provided. To the best of our knowledge, \cite{de2024interior} is the only penalty-barrier approach designed for our problem setting.  Instead of assuming any constraint qualification, they directly assume the existence and boundedness of Lagrange multipliers, which is typically implied by a constraint qualification. 

Three non-penalty approaches for solving regularized problems with constraints include~\cite{boob2025level, dai2024proximal,wei2024sqp}.  In~\cite{wei2024sqp}, the authors combine ideas from PG methods and sequential quadratic programming methods. In particular, their method formulates a quadratic approximation to $f$, linearizes the constraint function, and keeps the regularizer explicitly in each subproblem. This nonsmooth subproblem is solved using a semi-smooth Newton method. The weakness of this approach is that each subproblem is assumed to be feasible and no structure identification result is provided. In \cite{boob2025level}, a  feasible proximal-gradient method is proposed that  reformulates a nonconvex problem into convex surrogate subproblems with quadratic regularization, but it cannot handle problems that involve equality constraints due to the infeasibility of each subproblem. Our work builds upon on~\cite{dai2024proximal}, which only considers the equality-constrained case. Although limited in relevance here, we mention that some work has considered problems with only simple bound constraints~\cite{bertocchi2020deep,leconte2024interior} or only  linear constraints~\cite{hajinezhad2019perturbed,jiang2019structured,kong2019complexity}.

\subsection{Contributions}
Our contributions relate to the proposal, analysis, and testing of a new PG algorithm for solving problem~\eqref{prob:general}, as we now discuss. 
\begin{itemize}
\item We propose a new PG method  (Algorithm~\ref{alg:main}) for solving problem~\eqref{prob:general}. Unlike most work in the literature, our method has the following characteristics: (i) it uses the regularization function explicitly (as opposed to approximating it) when computing the trial step, (ii) it avoids using a penalty function to handle the constraints, and (iii) every subproblem is feasible.
\item We establish various convergence results.
(i) Without assuming any constraint qualification, we prove that the number of iterations required to reduce a stationarity measure related to minimizing the constraint violation below $\epsilon > 0$ is $O(\epsilon^{-2})$ (see Theorem~\ref{thm:no-licq}). (ii) Under the linear independence constraint qualification (LICQ), we show that all limit points of the iterate sequence are first-order KKT points (see Theorem~\ref{thm:KKT}). (iii) Under a sequential constraint qualification that is stronger than the LICQ, we prove that the worst-case iteration complexity needed to reduce a KKT measure below $\epsilon > 0$ is $O(\epsilon^{-2})$ (see Theorem~\ref{thm:complexity}). (iv) When strict complementarity holds in addition, we prove that our method possesses an optimal active-set identification property (see Theorem~\ref{thm:active-set-id}). (v) Under partial smoothness of the regularization function $r$ and a certain non-degeneracy assumption, we establish a manifold identification property for our method (see Theorem~\ref{thm:manifold}).
\item We numerically test the performance of our method 
on CUTEst test problems and a sparse canonical correlation analysis problem. In addition, we demonstrate the competitive performance of our algorithm by comparing it  to an augmented Lagrangian approach named Bazinga~\cite{de2023constrained}.
\end{itemize}

\subsection{Organization}
In Section~\ref{sec:preliminary}, we introduce notations and definitions. In Section~\ref{sec:algorithm}, we propose our method as  Algorithm~\ref{alg:main}. In Section~\ref{sec:subproblems}, we derive preliminary results for the subproblems used in our method, which are critical for the theoretical analysis we provide in Section~\ref{sec:analysis}. In Section~\ref{sec:numerical}, we illustrate our algorithm’s performance through numerical tests, and final comments are provided in Section~\ref{sec:conclusion}.

\section{Preliminaries}\label{sec:preliminary}
Let $\R{}$ denote the set of real numbers, $\R{}_{\geq0}$ (resp., $\R{}_{>0}$) denote the set of nonnegative (resp.,~positive) real numbers, $\R{n}$ denote the set of $n$-dimensional real vectors, and $\R{m \times n}$ denote the set of $m$-by-$n$-dimensional real matrices. The set of natural numbers is $\N{} := \{0,1,2,\dots\}$. For a given natural number $n \in \N{}$, let $[n] := \{1,\dots,n\}$. The index sets of active and inactive variables at $x\in\R{n}$ is $\Acal(x) := \{i \in [n]: x_i = 0\}$ and 
$\Ical(x) := \{i \in [n]: x_i \neq 0\}$, respectively. The $\epsilon$-neighborhood ball of a point $x\in \R{n}$ is $\Bcal(x, \epsilon) := \{z \in \R{n} : \|x - z\|_2 < \epsilon \}$.  Given a nonempty set $\Ccal$ that is either compact, or closed and convex, and a point $\xbar\in\R{n}$, the distance from $\xbar$ to $\Ccal$ is $\dist(\xbar,\Ccal) := \min_{x\in\Ccal} \|x-\xbar\|_2$.

For convenience, we define $g(x) :=  \nabla f(x)$ and $J(x) := \nabla c(x)^T$. We append a natural number as a subscript for a quantity to denote its value during an iteration of an algorithm; i.e., we let $f_k := f(x_k)$, $g_k := g(x_k)$, $c_k := c(x_k)$, and $J_k := J(x_k)$. 

We now introduce several key concepts from convex analysis that will be used throughout the paper.  We start with the normal cone~\cite[Theorem 6.9]{rockafellar2009variational}.

\begin{definition}[normal cone]\label{def:normal-cone}
The normal cone of a convex set  $\Ccal$ at $x\in\Ccal$ is
\begin{equation*}
N_{\Ccal}(x) = \{v \in \R{n} : v^T(y-x) \leq 0 \ \ \text{for all} \ \ y \in \Ccal\}.
\end{equation*}
\end{definition}
We define the tangent cone using its polarity with the normal cone~\cite[Theorem 6.28]{rockafellar2009variational}. 
\begin{definition}[tangent cone]\label{def:tangent-cone}
The tangent cone of a convex set $\Ccal$ at $x\in\Ccal$ is 
\begin{equation*}
T_{\Ccal}(x) = \{d \in \R{n} : v^Td \leq 0 \ \ \text{for all} \ \ v \in N_{\Ccal}(x)\}. 
\end{equation*}  
\end{definition}
Next, we define the projection onto a closed convex set~\cite[Proposition 1.1.9]{bertsekas2009convex}.

\begin{definition}[Projection]
Let $\Ccal \subseteq \R{n}$ be a nonempty closed convex set. 
The projection of $x \in \R{n}$ onto $\Ccal$ is $\Proj_{\Ccal}(x) := \arg\min_{y \in \Ccal} \|x-y\|_2$.
\end{definition}

Finally, we define the projection of the steepest descent direction of a  function onto the tangent cone~\cite[Equation (3.1)]{calamai1987projected} associated with $\Omega$ at a point $x$.

\begin{definition}\label{def:projected-steepest-descent}
Given a differentiable function $h:\R{n} \to \R{}$, a convex set $\Ccal$, and $x\in\Ccal$, the projection of the steepest descent direction of $h$ at $x$ onto $T_{\Ccal}(x)$ is
\begin{equation*}
\nabla_{\Ccal} h(x) = \mathop{\arg \min}\limits_{v \in T_{\Ccal}(x)} \ \|v + \nabla h(x)\|_2 \equiv \Proj_{T_\Ccal(x)}(-\nabla h(x)).
\end{equation*}
\end{definition}

\section{Algorithm Framework}\label{sec:algorithm}
The algorithm that we propose for solving problem~\eqref{prob:general} is stated as Algorithm~\ref{alg:main}. Given the $k$th iterate $x_k\in\Omega$, the $k$th proximal parameter $\alpha_k$, and constant $\kappa_v \in \R{}_{>0}$, we first compute a direction $v_k$ that reduces linearized infeasibility within  $\Omega$. In particular, the vector  $v_k$ is computed as an  approximate solution to the bound-constrained trust-region subproblem
\begin{equation}\label{subprob:feasibility}
\min_{v\in\R{n}} \ m_k(v) \ \ \text{s.t.} \ \|v\|_2 \leq \kappa_v \alpha_k \delta_k,\  x_k + v \in\Omega\ \ \text{with}\ \ m_k(v):=\tfrac{1}{2}\|c_k+J_k v\|_2^2, 
\end{equation}
where 
\begin{equation}\label{def:deltak}
\delta_k := \|\nabla_{\Omega} \psi(x_k)\|_2 \equiv \|\Proj_{T_{\Omega}(x_k)}(-J_k^T c_k)\|_2 \ \ \text{with} \ \ \psi(x) := \tfrac{1}{2}\|c(x)\|_2^2.
\end{equation}
If $\delta_k = 0$, then  $v_k \gets 0$ solves~\eqref{subprob:feasibility}.  In this case, if $\|c_k\|_2 \neq 0$, we terminate our algorithm in Line~\ref{line:ifs} since $x_k$ is an infeasible stationary point, i.e., $x_k$ is infeasible for $c(x) = 0$ and is a first-order stationary point for the problem
\begin{equation}\label{prob:isp}
\min_{x\in\Omega} \ \tfrac{1}{2}\|c(x)\|_2^2.
\end{equation}
If $\delta_k \neq 0$, we compute an approximate solution $v_k$ to~\eqref{subprob:feasibility} satisfying
\begin{equation}~\label{eq:vk-condition}
\begin{aligned}
\|v_k\|_2 \leq \kappa_v \alpha_k \delta_k,  \ \ 
x_k + v_k \in \Omega, \ \ \text{and} \ \ m_k(v_k) \leq m_k(v_k^c), 
\end{aligned}
\end{equation}
where $v_k^c$ is a Cauchy point computed using a projected line search along the steepest descent direction of $m_k$ at $v = 0$.  In particular, by defining
\begin{equation}\label{def:vkbeta}
v_k(\beta) \gets \Proj_{\Omega}(x_k - \beta \nabla m_k(0)) - x_k
\equiv \Proj_{\Omega}(x_k - \beta J_k^T c_k) - x_k, 
\end{equation}
we define the Cauchy point as 
\begin{equation}\label{eq:beta-condition}
v_k^c 
:= v_k(\beta_k)  \equiv \text{Proj}_{\Omega}(x_k - \beta_k J_k^T c_k) - x_k 
\end{equation}
where, for some chosen $\gamma\in(0,1)$, 
\begin{equation}\label{def:beta}
\beta_k = \gamma^{i_k}
\end{equation}
with $i_k$ being the smallest nonnegative integer such that $\beta_k$ in~\eqref{def:beta} satisfies
\begin{equation}\label{eq:bound-sufficient-decrease-sub}
  \|v_k(\beta_k)\|_2 \leq \kappa_v \alpha_k \delta_k \ \ \text{and} \ \  m_k(v_k(\beta_k)) \leq m_k(0) + \eta_m \nabla m_k(0)^T v_k(\beta_k)
\end{equation}
for some constant $\eta_m \in (0,1)$.  (It follows from Lemma~\ref{lem:lower-bound-betak} later on that this procedure is well defined.)  Note from the definition of $v_k^c$ (see~\eqref{eq:beta-condition} which ensures $x_k + v_k^c\in\Omega$) and~\eqref{eq:bound-sufficient-decrease-sub} that $v_k^c$ itself satisfies the conditions required of $v_k$ in~\eqref{eq:vk-condition}.


\balgorithm[!th]
\caption{PG method for solving problem~\eqref{prob:general}}
\label{alg:main}
\balgorithmic[1]
   \State \textbf{Input:} $x_0\in\Omega$, $\{\alpha_0,\tau_{-1},\kappa_\tau,\kappa_v\} \subset \R{}_{>0}$, and $\{\xi,\eta_\Phi,\sigma_c,\epsilon_\tau,\gamma,\eta_m\} \subset (0,1)$
   \For{$k = 0,1,2,\dots$}
   \State compute $\delta_k$ in~\eqref{def:deltak}
   \If{$\delta_k = 0$}\label{line:deltak}
       \State set $v_k \gets 0$
       \If{$\|c_k\|_2 \neq 0$}\label{line:ifs.start}
         \State return $x_k$ (infeasible stationary point)\label{line:ifs}
      \EndIf      
   \Else~($\delta_k \neq 0$)  
      \State compute $v_k$ as an approximate solution to~\eqref{subprob:feasibility} satisfying~\eqref{eq:vk-condition} \label{line:vk} 
   \EndIf\label{line:ifs.end}
   \State compute $u_k$ as the unique solution to subproblem~\eqref{subprob:stationarity} \label{line:uk}
   \State set $s_k \gets v_k + u_k$
   \If{$\|s_k\|_2/\alpha_k = 0$}
   \State return $x_k$ (first-order KKT point for problem~\eqref{prob:general}) \label{line:first.order.kkt} 
   \EndIf
   \State compute $\tau_k$ using~\eqref{eq:tau-update}
   \If{$\Phi_{\tau_k}(x_k+s_k) - \Phi_{\tau_k}(x_k) \leq -\eta_\Phi\left(\frac{\tau_k}{4\alpha_k}\|s_k\|_2^2 + \sigma_c (\|c_k\|_2 - \|c_k + J_k s_k\|_2) \right)$}~\label{line:check-suff-decrease}
   \State set $x_{k+1} \to x_k + s_k$ and $\alpha_{k+1} \to \alpha_k$ \label{line:alpha-same} 
   \Else
   \State set $x_{k+1} \to x_k$ and $\alpha_{k+1} \to \xi \alpha_k$
   \EndIf
    \EndFor
    \ealgorithmic
\ealgorithm

Next, we compute a direction $u_k$ that maintains the level of linearized infeasibility achieved by $v_k$ while also reducing a model of the objective function. In particular, we compute $u_k$ as the unique solution to the strongly convex subproblem
\begin{equation}~\label{subprob:stationarity}
\begin{aligned}
\min_{u \in \R{n}} & \ g_k^T u + \tfrac{1}{2\alpha_k} \|u\|_2^2 + \tfrac{1}{\alpha_k}v_k^Tu + r(x_k+v_k+u) \\
\text{s.t.} & \ J_k u = 0, \ \ x_k + v_k + u \in \Omega.
\end{aligned}
\end{equation}
Concerning  subproblem~\eqref{subprob:stationarity}, note that $u=0$ is feasible and that its solution is unique since it is a convex optimization problem with a strongly convex objective function. The overall trial step $s_k$ is defined as $s_k = v_k + u_k$. 

To determine whether the trial step $s_k$ is accepted, we adopt the $\ell_2$ merit function, which for merit parameter $\tau \in\R{}_{>0}$ is defined as 
$$
\Phi_\tau(x) := \tau\big( f(x) + r(x)\big) + \|c(x)\|_2.
$$
During each iteration, the merit parameter is updated so that $s_k$ is a descent direction for the merit function. To ensure that this holds, note that the directional derivative of $\Phi_\tau$ at $x_k$ along $s_k$, denoted as $D_{\Phi_{\tau}}(x_k,s_k)$, satisfies (see~\cite[Lemma 3.3]{dai2024proximal})
\begin{align*}
& \ D_{\Phi_{\tau}}(x_k,s_k) \\
\leq & \ \tau (g_k^T s_k + r(x_k+s_k) - r_k) + \|c_k + J_k s_k\|_2 - \|c_k\|_2 \\
= & -\tfrac{\tau}{2\alpha_k}\|s_k\|_2^2 + \tau \underbrace{(g_k^T s_k + \tfrac{1}{2\alpha_k}\|s_k\|_2^2 + r(x_k+s_k) - r_k)}_{A_k} + \|c_k + J_k s_k\|_2 - \|c_k\|_2.
\end{align*}
Next, for a chosen parameter $\sigma_c \in (0,1)$, we set
\[
\tau_{k,\text{trial}} \gets \left \{\begin{array}{ll}
    \infty & \text{if } A_k \leq 0, \\
    \tfrac{(1-\sigma_c)(\|c_k\|_2 - \|c_k + J_k s_k\|_2)}{g_k^T s_k + \tfrac{1}{2\alpha_k}\|s_k\|_2^2 + r(x_k+s_k) - r_k} & \text{otherwise,} 
\end{array} \right.
\]
and then set, for some chosen $\epsilon_\tau\in(0,1)$, the value of the $k$th merit parameter as
\begin{equation}~\label{eq:tau-update}
\tau_k \gets
\begin{cases}
   \tau_{k-1} & \text{if } \tau_{k-1} \leq \tau_{k,\text{trial}}, \\
   \min\{(1-\epsilon_{\tau})\tau_{k-1},  \tau_{k,\text{trial}}\} & \text{otherwise.} 
\end{cases}
\end{equation}
This merit parameter update strategy ensures that
\begin{equation*}
    D_{\Phi_{\tau_k}}(x_k,s_k) \leq -\tfrac{\tau_k}{2\alpha_k}\|s_k\|_2^2 - \sigma_c (\|c_k\|_2 - \|c_k + J_k s_k\|_2),
\end{equation*}
meaning that the negative directional derivative is lower bounded by critical measures of problem~\eqref{prob:general}. The $k$th iteration is completed by checking whether the merit function achieves sufficient decrease (see Line~\ref{line:check-suff-decrease}), and then defining the next iterate and proximal parameter accordingly. Specifically, if sufficient decrease in the merit function is achieved, the trial step is accepted  (i.e., $x_{k+1} \gets x_k + s_k$) and the proximal parameter value is maintained (i.e., $\alpha_{k+1} \gets \alpha_k$); otherwise, the trial step is rejected (i.e., $x_{k+1} \gets x_k$) and the proximal parameter value is decreased (i.e., $\alpha_{k+1} \gets \xi \alpha_k$ for some $\xi\in(0,1)$).  This update strategy motivates the definition of the index set \begin{equation}\label{def:S}
\Scal := \{k \in \N{}: x_{k+1} = x_k + s_k \},
\end{equation}
which contains the indices of the successful iterations associated with  Algorithm~\ref{alg:main}.

The following assumption is assumed to hold throughout the paper.

\begin{assumption}\label{ass:basic}
Let $\Xcal \subseteq \R{n}$ be an open convex set containing the iterate sequences $\{x_k\}$ and $\{x_k+v_k\}$ generated by Algorithm~\ref{alg:main}. The function $f: \R{n} \rightarrow \R{}$ is bounded over $\Xcal$, and its gradient function $\nabla f: \R{n} \rightarrow \R{}$ is Lipschitz continuous and bounded in norm over $\Xcal$. Similarly, for all $i \in [m]$, the constraint function $c_i: \R{n} \rightarrow \R{}$ is bounded over $\Xcal$, and its gradient function $\nabla c_i: \R{n} \rightarrow \R{}$ is Lipschitz continuous and bounded in norm over $\Xcal$. Finally, the function $r : \R{n} \to \R{}_{\geq 0}$ is convex, and has bounded  subdifferential $\partial r: \R{n} \to \R{n}$ over $\Xcal$.
\end{assumption}

Under Assumption~\ref{ass:basic}, there exist constants $(\finf,\fsup,\kappag,\kappagr,\kappac,\kappaJ,\lipg,\lipJ) \in \R{} \times \R{} \times \R{}_{>0} \times \R{}_{>0} \times \R{}_{>0} \times \R{}_{> 0} \times \R{}_{>0} \times \R{}_{> 0}$ such that for all $x\in\Xcal$ one has
\begin{equation}\label{eq:consequence-of-basic-assumption}
\begin{aligned}
\finf &\leq f(x) \leq \fsup, & \ \ 
\|\nabla f(x)\|_2 &\leq \kappag, & \ \
\|\partial r(x)\|_2 &\leq \kappagr, \\ 
\|c(x)\|_2 &\leq \kappac, & \ \ 
\|\nabla c(x)^T\|_2 &\leq \kappaJ, & 
\end{aligned}
\end{equation}
and for all $(x,\xbar)\in\Xcal\times \Xcal$ one has
\begin{equation}\label{eq.Lipschitz}
  \|\nabla f(x) - \nabla f(\xbar)\|_2 \leq \lipg \|x - \xbar\|_2
  \ \ \text{and}\ \
  \|\nabla c(x)^T - \nabla c(\xbar)^T\|_2 \leq \lipJ \|x - \xbar\|_2.
\end{equation}

\section{Preliminary Properties Related to the Subproblems}\label{sec:subproblems}
In this section, we discuss properties related to the  subproblems used in Algorithm~\ref{alg:main}.  

\subsection{Subproblem (\ref{subprob:feasibility})}\label{sec:trust-region}

In this section, we present properties  related to the computation of the Cauchy point of subproblem~\eqref{subprob:feasibility}, following by a final result related to the computed feasibility steps. Recall that the Cauchy point is defined in~\eqref{eq:beta-condition}.  Our first lemma summarizes properties of $v_k(\cdot)$ (recall~\eqref{def:vkbeta}).

\begin{lemma}\label{lem:vk-monotonicity}
Consider $v_k(\cdot)$  defined in~\eqref{eq:beta-condition}. For all $0 < \beta_2 \leq \beta_1$, it holds that
\begin{subequations}
\begin{align}
\|v_k(\beta_2)\|_2 &\leq
\|v_k(\beta_1)\|_2 \ \ \text{and}  \label{eq:nondecreasing} \\
\|v_k(\beta_1)/\beta_1\|_2 &\leq \|v_k(\beta_2)/\beta_2\|_2. \label{eq:nonincraesing}
\end{align}
\end{subequations}
For all $\beta\in\R{}_{>0}$ it holds that
\begin{subequations}
\begin{align}
-\nabla m_k(0)^T v_k (\beta) &\geq \|v_k(\beta)\|_2^2/\beta \ \ \text{and} \label{eq:ip-bound} \\
\delta_k \equiv \|\nabla_{\Omega} \psi(x_k)\|_2 
&\geq \left\|v_k(\beta)/\beta\right\|_2. \label{eq:lower-bound-ratio}
\end{align}
\end{subequations}
Finally, the following limit holds:
\begin{equation}\label{eq:limiting-vector}
\lim_{\beta \to 0^+} v_k(\beta)/\beta = \nabla_{\Omega}\psi(x_k).
\end{equation}
\end{lemma}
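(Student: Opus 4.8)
The plan is to exploit the fact that $\Omega$ is the nonnegative orthant, so that the projection $\Proj_\Omega$ --- and hence the entire path $v_k(\cdot)$ --- separates across coordinates. Writing $g := \nabla m_k(0) = J_k^T c_k$ and noting $x_k \in \Omega$ (so $[x_k]_i \ge 0$), the orthant projection formula gives, for each $i \in [n]$,
\[ [v_k(\beta)]_i = \max\{0,\, [x_k]_i - \beta g_i\} - [x_k]_i = \max\{-[x_k]_i,\, -\beta g_i\}. \]
First I would analyze each scalar map $\beta \mapsto [v_k(\beta)]_i$ in the two regimes $g_i \le 0$ (where $[v_k(\beta)]_i = -\beta g_i \ge 0$ grows linearly in $\beta$) and $g_i > 0$ (where $[v_k(\beta)]_i = -\beta g_i$ until $\beta$ reaches the breakpoint $[x_k]_i/g_i$ and thereafter stays fixed at $-[x_k]_i$). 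From this description $|[v_k(\beta)]_i|$ is non-decreasing while $|[v_k(\beta)]_i/\beta|$ is non-increasing in $\beta$ in every case; squaring and summing over $i$ then yields \eqref{eq:nondecreasing} and \eqref{eq:nonincraesing} at once.

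For \eqref{eq:ip-bound} I would give a short coordinate-free argument using the variational characterization of the projection. Writing $p := \Proj_\Omega(x_k - \beta g)$ so that $v_k(\beta) = p - x_k$, optimality of the projection gives $(x_k - \beta g - p)^T(w - p) \le 0$ for all $w \in \Omega$; taking the feasible point $w = x_k$ and simplifying produces $\|v_k(\beta)\|_2^2 + \beta\, g^T v_k(\beta) \le 0$, which rearranges exactly to \eqref{eq:ip-bound}. The same bound also falls out of the componentwise picture, where each coordinate obeys $-g_i [v_k(\beta)]_i \ge [v_k(\beta)]_i^2/\beta$ (with equality away from the active breakpoint).

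Next I would establish the limit \eqref{eq:limiting-vector}, from which \eqref{eq:lower-bound-ratio} follows immediately. Taking $\beta \to 0^+$ in the scalar description above, the $i$th component of $v_k(\beta)/\beta$ tends to $-g_i$ when $[x_k]_i > 0$ (i.e.\ $i \in \Ical(x_k)$, the unrestricted value) and to $\max\{0,-g_i\}$ when $[x_k]_i = 0$ (i.e.\ $i \in \Acal(x_k)$, the one-sided value). Because the tangent cone satisfies $T_\Omega(x_k) = \{d : d_i \ge 0 \text{ for } i \in \Acal(x_k)\}$, its projection also separates coordinatewise, and $\Proj_{T_\Omega(x_k)}(-g)$ has precisely these components; this identifies the limit with $\nabla_\Omega \psi(x_k)$ and proves \eqref{eq:limiting-vector}. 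Finally, since \eqref{eq:nonincraesing} makes $\|v_k(\beta)/\beta\|_2$ non-increasing in $\beta$, for any fixed $\beta > 0$ I would let the competing parameter $\beta' \le \beta$ shrink to $0^+$ and invoke \eqref{eq:limiting-vector} to obtain $\|v_k(\beta)/\beta\|_2 \le \lim_{\beta'\to 0^+}\|v_k(\beta')/\beta'\|_2 = \|\nabla_\Omega \psi(x_k)\|_2 = \delta_k$, which is \eqref{eq:lower-bound-ratio}.

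I expect the main obstacle to be the limit \eqref{eq:limiting-vector}, specifically matching the componentwise limit of $v_k(\beta)/\beta$ with $\Proj_{T_\Omega(x_k)}(-g)$ on the active set: one must check carefully that for an active index with $g_i > 0$ the path component is identically zero (the breakpoint $[x_k]_i/g_i$ equals $0$), whereas for $g_i \le 0$ it grows linearly, so that the limiting value is the truncation $\max\{0,-g_i\}$ rather than the free value $-g_i$. Keeping this active/inactive case split consistent across all five claims --- and confirming that both $\Proj_\Omega$ and $\Proj_{T_\Omega(x_k)}$ decompose along coordinates for the orthant --- is where the care lies; the monotonicity relations and the inner-product bound are then routine.
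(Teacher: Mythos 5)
Your proof is correct, but it takes a genuinely different route from the paper. The paper disposes of the lemma in one line by citation: \eqref{eq:nondecreasing}--\eqref{eq:ip-bound} are quoted from Toint's trust-region lemma \cite[Lemma 2]{toint1988global}, \eqref{eq:limiting-vector} from McCormick--Tapia \cite[Proposition 2]{mccormick1972gradient}, and \eqref{eq:lower-bound-ratio} is obtained exactly as you do, by combining \eqref{eq:limiting-vector}, \eqref{eq:nonincraesing}, and \eqref{def:deltak}. Those cited results hold for projections onto an \emph{arbitrary} closed convex set, whereas your coordinatewise analysis of $[v_k(\beta)]_i = \max\{-[x_k]_i,\,-\beta g_i\}$ is specific to the nonnegative orthant. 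What you gain is a completely self-contained and elementary argument: the scalar case split makes the two monotonicity claims and the limit transparent, and your variational-inequality derivation of \eqref{eq:ip-bound} (testing the projection inequality at $w = x_k$) is in fact the general-convex-set argument and would survive any change of $\Omega$. What you lose is that the monotonicity proofs would not transfer if $\Omega$ were replaced by a general convex set, which is the situation the cited references cover. All the delicate points you flag --- the active-index breakpoint at $\beta = [x_k]_i/g_i = 0$, the identification of the componentwise limit with $\Proj_{T_\Omega(x_k)}(-J_k^Tc_k)$ via the explicit form of $T_\Omega(x_k)$ for the orthant --- check out, so there is no gap.
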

\begin{proof}
Parts~\eqref{eq:nondecreasing}--\eqref{eq:ip-bound} follow from~\cite[Lemma 2]{toint1988global}, part~\eqref{eq:limiting-vector} follows from~\cite[Proposition 2]{mccormick1972gradient}, and part~\eqref{eq:lower-bound-ratio} follows by combining~\eqref{eq:limiting-vector}, \eqref{eq:nonincraesing}, and~\eqref{def:deltak}.
\end{proof}

The next result is a special case of~\cite[Lemma 4.3]{more2006trust}.
\begin{lemma}\label{lem:lower-bound-betak}
Suppose that $\delta_k \neq 0$.  If $\beta \in\R{}_{>0}$ satisfies $m_k(v_k(\beta)) > m_k(0) + \eta_m \nabla m(0)^T v_k(\beta)$, then $\beta \geq (1-\eta_m)/\|J_k^T J_k\|_2$.
\end{lemma}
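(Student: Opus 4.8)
The plan is to exploit the fact that $m_k$ is a convex quadratic, so its second-order Taylor expansion about $v=0$ is exact, and then to combine this identity with the inner-product bound~\eqref{eq:ip-bound} from Lemma~\ref{lem:vk-monotonicity}.

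First I would record that $\nabla m_k(0) = J_k^T c_k$ and that the (constant) Hessian of $m_k$ is $J_k^T J_k$, so that for every $v$ one has exactly
\[
m_k(v) = m_k(0) + \nabla m_k(0)^T v + \tfrac12 v^T J_k^T J_k v.
\]
Substituting $v = v_k(\beta)$ and subtracting $m_k(0) + \eta_m \nabla m_k(0)^T v_k(\beta)$, the hypothesized failure of the sufficient-decrease condition becomes
\[
(1-\eta_m)\,\nabla m_k(0)^T v_k(\beta) + \tfrac12\, v_k(\beta)^T J_k^T J_k v_k(\beta) > 0,
\]
or equivalently $\tfrac12\, v_k(\beta)^T J_k^T J_k v_k(\beta) > (1-\eta_m)\big({-}\nabla m_k(0)^T v_k(\beta)\big)$.

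Next I would bound the two sides separately. The left-hand side satisfies $\tfrac12\, v_k(\beta)^T J_k^T J_k v_k(\beta) \le \tfrac12 \|J_k^T J_k\|_2 \|v_k(\beta)\|_2^2$ since $J_k^T J_k$ is symmetric positive semidefinite, while~\eqref{eq:ip-bound} gives $-\nabla m_k(0)^T v_k(\beta) \ge \|v_k(\beta)\|_2^2/\beta \ge 0$. Chaining these through the displayed inequality yields
\[
\tfrac12 \|J_k^T J_k\|_2 \|v_k(\beta)\|_2^2 > (1-\eta_m)\|v_k(\beta)\|_2^2/\beta.
\]
The strict inequality forces $v_k(\beta)\neq 0$ (at $v_k(\beta)=0$ both sides of the failure condition vanish, contradicting strictness), so $\|v_k(\beta)\|_2^2 > 0$ and I may divide by it to obtain $\beta > 2(1-\eta_m)/\|J_k^T J_k\|_2 \ge (1-\eta_m)/\|J_k^T J_k\|_2$, which is the claim.

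The only real subtlety — the closest thing to an obstacle — is justifying the division by $\|v_k(\beta)\|_2^2$, i.e., confirming that $v_k(\beta)$ is nonzero; this is handled cleanly by observing that the strict failure of the Armijo-type condition cannot occur at $v_k(\beta)=0$ (so the hypothesis $\delta_k \neq 0$ is needed only to place us in the regime where $v_k(\cdot)$ is actually being computed). Everything else is the exact quadratic expansion together with the already-established bound~\eqref{eq:ip-bound}; notably, the argument in fact delivers the factor-of-two-stronger conclusion $\beta > 2(1-\eta_m)/\|J_k^T J_k\|_2$, from which the stated inequality is immediate.
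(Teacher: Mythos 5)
Your proof is correct. The paper gives no argument of its own for Lemma~\ref{lem:lower-bound-betak}, simply citing \cite[Lemma 4.3]{more2006trust}; your self-contained derivation --- the exact quadratic expansion of $m_k$, the bound \eqref{eq:ip-bound}, and the observation that $v_k(\beta)\neq 0$ whenever the decrease condition fails strictly --- is precisely the standard argument underlying that citation, and it even delivers the slightly stronger conclusion $\beta > 2(1-\eta_m)/\|J_k^T J_k\|_2$.
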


We now bound the decrease in $m_k$ by using the argument in~\cite[Theorem 4.4]{more2006trust}. 
\begin{lemma}\label{lem:cauchy-decrease-projection}
Suppose that $\delta_k \neq 0$. Then, with respect to the constant $\kappabar_1 := \min\{1, \gamma(1-\eta_m), \gamma\} \equiv \gamma(1-\eta_m) \in (0,1)$, the Cauchy point $v_k^c \equiv v_k(\beta_k)$ satisfies
\begin{equation*}\label{eq:cauchy-decrease-projection}
-\nabla m_k(0)^T v_k(\beta_k) \geq \kappabar_1 \left[\frac{\|v_k(\beta_k)\|_2}{\beta_k}\right] \min \left\{\frac{1}{1+ \|J_k^T J_k\|_2}\left[\frac{\|v_k(\beta_k)\|_2}{\beta_k}\right], \kappa_v \alpha_k \delta_k \right\}.
\end{equation*}
Moreover, with respect to the constant $\kappa_1 := \kappabar_1 \eta_m \equiv \gamma\eta_m(1-\eta_m)\in(0,1)$, it satisfies
\begin{equation*}
m_k(0) - m_k(v_k(\beta_k)) \geq \kappa_1 \left[\frac{\|v_k(\beta_k)\|_2}{\beta_k}\right] \min \left\{\frac{1}{1+ \|J_k^T J_k\|_2}\left[\frac{\|v_k(\beta_k)\|_2}{\beta_k}\right], \kappa_v \alpha_k \delta_k \right\}.
\end{equation*}
\end{lemma}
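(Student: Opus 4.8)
The plan is to reduce both displayed inequalities to a single lower bound on the accepted step length, namely
\[
\beta_k \ \geq\ \kappabar_1 \left\{ \min\left\{\frac{1}{1+\|J_k^TJ_k\|_2},\ \frac{\kappa_v\alpha_k\delta_k}{\|v_k(\beta_k)\|_2/\beta_k}\right\}\right\},
\]
and then to invoke the inner-product estimate~\eqref{eq:ip-bound}. Abbreviating the ratio by $R := \|v_k(\beta_k)\|_2/\beta_k$, inequality~\eqref{eq:ip-bound} reads $-\nabla m_k(0)^T v_k(\beta_k) \geq R\,\|v_k(\beta_k)\|_2$. Multiplying the step-length bound above by $R$ lower-bounds $\|v_k(\beta_k)\|_2 = \beta_k R$, and substituting the result into~\eqref{eq:ip-bound} then yields precisely the first claimed inequality. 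So the first task is to establish the step-length bound.

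To prove that bound I would analyze the backtracking defining $\beta_k = \gamma^{i_k}$ in~\eqref{def:beta}, splitting on the value of $i_k$. If $i_k = 0$ then $\beta_k = 1$, and the bound is immediate because the $\min$ is at most $1/(1+\|J_k^TJ_k\|_2) \leq 1$ while $\kappabar_1 \leq 1$. If $i_k \geq 1$, the previous trial $\beta' := \gamma^{i_k-1}$ must violate at least one of the two conditions in~\eqref{eq:bound-sufficient-decrease-sub}. If it violates the sufficient-decrease inequality, Lemma~\ref{lem:lower-bound-betak} gives $\beta' \geq (1-\eta_m)/\|J_k^TJ_k\|_2$, hence $\beta_k = \gamma\beta' \geq \gamma(1-\eta_m)/\|J_k^TJ_k\|_2 \geq \gamma(1-\eta_m)/(1+\|J_k^TJ_k\|_2)$, which covers the first term in the $\min$ with factor $\gamma(1-\eta_m)$. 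If instead $\beta'$ violates the trust-region bound, so that $\|v_k(\beta')\|_2 > \kappa_v\alpha_k\delta_k$, I would apply the ratio monotonicity~\eqref{eq:nonincraesing} with $\beta' \geq \beta_k$ to get $\|v_k(\beta')\|_2/\beta' \leq R$, whence $\kappa_v\alpha_k\delta_k < \|v_k(\beta')\|_2 \leq \beta' R$ and therefore $\beta_k = \gamma\beta' > \gamma\,\kappa_v\alpha_k\delta_k/R$, covering the second term with factor $\gamma$. Taking the worst factor $\min\{1,\gamma(1-\eta_m),\gamma\} = \kappabar_1$ across the three cases delivers the step-length bound.

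With the first inequality in hand, the second follows directly: the accepted step satisfies the sufficient-decrease condition in~\eqref{eq:bound-sufficient-decrease-sub}, which rearranges to $m_k(0) - m_k(v_k(\beta_k)) \geq \eta_m\big[-\nabla m_k(0)^T v_k(\beta_k)\big]$, and substituting the first inequality collects the constant $\kappa_1 = \eta_m\kappabar_1 = \gamma\eta_m(1-\eta_m)$.

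The main obstacle I anticipate is the bookkeeping in the trust-region case: applying the ratio monotonicity~\eqref{eq:nonincraesing} in the correct direction ($\beta' \geq \beta_k$, so the ratio at $\beta'$ is no larger than $R$) and confirming that $\|J_k^TJ_k\|_2 > 0$, which holds because $\delta_k \neq 0$ forces $J_k \neq 0$ (otherwise $-J_k^T c_k = 0$ would give $\delta_k = \|\Proj_{T_\Omega(x_k)}(0)\|_2 = 0$). This positivity is what makes both $(1-\eta_m)/\|J_k^TJ_k\|_2$ and the comparison $1/\|J_k^TJ_k\|_2 \geq 1/(1+\|J_k^TJ_k\|_2)$ well defined.
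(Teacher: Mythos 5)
Your proposal is correct and follows essentially the same route as the paper: the same three-way case analysis on the backtracking ($\beta_k=1$; the previous trial failed sufficient decrease, handled via Lemma~\ref{lem:lower-bound-betak}; the previous trial violated the trust-region bound, handled via the ratio monotonicity~\eqref{eq:nonincraesing}), combined with the inner-product bound~\eqref{eq:ip-bound}, and the second inequality deduced from the first via the sufficient-decrease condition in~\eqref{eq:bound-sufficient-decrease-sub}. Packaging the case analysis as a single lower bound on $\beta_k$ before multiplying by $R$ is only a cosmetic reorganization of the paper's argument.
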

\begin{proof}
We begin by proving the first inequality by considering three cases. \\
\textbf{Case 1: $\beta_k = 1$.} It follows from~\eqref{eq:ip-bound} and $\beta_k = 1$ that
\begin{align*}
    -\nabla m_k(0)^T v_k(\beta_k) 
    &\geq \beta_k\left[\frac{\|v_k(\beta_k)\|_2}{\beta_k}\right]^2  =  \left[\frac{\|v_k(\beta_k)\|_2}{\beta_k}\right]^2 \\
    &\geq  \frac{\|v_k(\beta_k)\|_2}{\beta_k}\min\left\{\frac{\|v_k(\beta_k)\|_2}{\beta_k},\kappav\alpha_k\delta_k\right\}.
\end{align*}
Combining this result with $1/(1+\|J_k^TJ_k\|_2) \leq 1$ shows that the first inequality holds. \\
\textbf{Case 2: $\beta_k < 1$ and $\|v_k(\gamma^{-1}\beta_k)\|_2 \leq \kappa_v \alpha_k \delta_k$.} Since $\gamma\in(0,1)$, $\|v_k(\gamma^{-1}\beta_k)\|_2 \leq \kappa_v \alpha_k \delta_k$, and the step size  $\gamma^{-1}\beta_k$ was not accepted by the search procedure, the sufficient decrease condition must not have held, i.e., it must hold that $m_k(v_k(\gamma^{-1}\beta_k)) > m_k(0) + \eta_m \nabla m_k(0)^T v_k(\gamma^{-1}\beta_k)$. Combining this inequality with Lemma~\ref{lem:lower-bound-betak} gives
$\gamma^{-1}\beta_k \geq (1-\eta_m)/\|J_k^T J_k\|_2$. Combining this with~\eqref{eq:ip-bound} gives
\begin{align*}
    -\nabla m_k(0)^T v_k(\beta_k) 
    &\geq \beta_k    \left[\frac{\|v_k(\beta_k)\|_2}{\beta_k}\right]^2  \geq \gamma \frac{(1-\eta_m)} {1+\|J_k^T J_k\|_2} \left[\frac{\|v_k(\beta_k)\|_2}{\beta_k}\right]^2 \\
    &\geq \gamma (1-\eta_m) \frac{\|v_k(\beta_k)\|_2}{\beta_k} \min\left\{
    \frac{1}{1+\|J_k^T J_k\|_2}    \left[\frac{\|v_k(\beta_k)\|_2}{\beta_k}\right],\kappav\alpha_k\delta_k\right\}
\end{align*}
so that the first inequality again holds, and completes the proof for this case. \\
\textbf{Case 3: $\beta_k < 1$ and $\|v_k(\gamma^{-1}\beta_k)\|_2 > \kappa_v \alpha_k \delta_k$.} It follows from~\eqref{eq:nonincraesing} and the fact that $\gamma\in(0,1)$ that  $\frac{\|v_k(\beta_k)\|_2}{\beta_k} \geq \frac{\|v_k(\gamma^{-1}\beta_k)\|_2}{\gamma^{-1}\beta_k}$.  After rearrangement and using the fact that  $\|v_k(\gamma^{-1}\beta_k)\|_2 > \kappa_v \alpha_k \delta_k$ in this case, we obtain
$\gamma^{-1}\|v_k(\beta_k)\|_2 \geq \|v_k(\gamma^{-1}\beta_k)\|_2 > \kappa_v \alpha_k \delta_k$, 
which combined with~\eqref{eq:ip-bound} yields
\begin{align*}
    -\nabla m_k(0)^T v_k(\beta_k) 
    &\geq \|v_k(\beta_k)\|_2\left[\frac{\|v_k(\beta_k)\|_2}{\beta_k}\right]  > \gamma \kappa_v \alpha_k \delta_k \left[\frac{\|v_k(\beta_k)\|_2}{\beta_k}\right] \\
    &\geq \gamma\left[\frac{\|v_k(\beta_k)\|_2}{\beta_k}\right] \min \left\{\frac{1}{1+ \|J_k^T J_k\|_2}\left[\frac{\|v_k(\beta_k)\|_2}{\beta_k}\right], \kappa_v \alpha_k \delta_k \right\},
\end{align*}
so that the first inequality again holds, and completes the proof for this case.

The second inequality  follows from the first inequality and~\eqref{eq:bound-sufficient-decrease-sub}.
\end{proof}

Combining the previous result with Lemma~\ref{lem:vk-monotonicity} gives new lower bounds.
\begin{lemma}\label{cor:cauchy-decrease}
For $\kappa_1 \in (0,1]$ in Lemma~\ref{lem:cauchy-decrease-projection}, the Cauchy point $v_k^c \equiv v_k(\beta_k)$ yields
\begin{subequations}\label{eq:cauchy-decrease-projection-lemma}
\begin{align}
m_k(0) - m_k(v_k^c) 
&\geq \kappa_1 \left[\frac{\|v_k(\beta_k)\|_2}{\beta_k}\right]^2 \min \left\{\frac{1}{1+ \|J_k^T J_k\|_2}, \kappa_v \alpha_k \right\} \label{eq:cauchy-decrease-projection-corollary-1} \\
&\geq \kappa_1 \|v_k(1)\|_2^2 \min \left\{\frac{1}{1+ \|J_k^T J_k\|_2}, \kappa_v \alpha_k \right\} \label{eq:cauchy-decrease-projection-corollary-2}
\end{align}
\end{subequations}
and
\begin{equation}\label{diff-m-no-assumptions}
\|c_k\|_2 - \|c_k + J_k v_k^c\|_2 
\geq \tfrac{\kappa_1}{\kappa_c}\|v_k(1)\|_2^2 \min \left\{\frac{1}{1+ \|J_k^T J_k\|_2}, \kappa_v \alpha_k \right\}.
\end{equation}
\end{lemma}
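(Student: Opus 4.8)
The plan is to obtain all three displayed bounds from Lemma~\ref{lem:cauchy-decrease-projection} together with the monotonicity relations collected in Lemma~\ref{lem:vk-monotonicity}; no fresh estimate on the model $m_k$ is required. For brevity I write $R_k := \|v_k(\beta_k)\|_2/\beta_k$, and recall from~\eqref{eq:lower-bound-ratio} that $\delta_k \geq R_k$.

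For~\eqref{eq:cauchy-decrease-projection-corollary-1}, I start from the second inequality of Lemma~\ref{lem:cauchy-decrease-projection},
\[
m_k(0) - m_k(v_k^c) \geq \kappa_1 R_k \min\Big\{\tfrac{R_k}{1+\|J_k^TJ_k\|_2},\ \kappa_v\alpha_k\delta_k\Big\},
\]
and rewrite the right-hand side by pulling a second factor of $R_k$ into the minimum, obtaining $\kappa_1 R_k^2 \min\{\tfrac{1}{1+\|J_k^TJ_k\|_2},\ \kappa_v\alpha_k\delta_k/R_k\}$. Since~\eqref{eq:lower-bound-ratio} gives $\delta_k/R_k \geq 1$, replacing $\delta_k/R_k$ by $1$ can only decrease the second argument of the minimum, which yields exactly the factor $\min\{\tfrac{1}{1+\|J_k^TJ_k\|_2},\kappa_v\alpha_k\}$ and hence~\eqref{eq:cauchy-decrease-projection-corollary-1}.

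Inequality~\eqref{eq:cauchy-decrease-projection-corollary-2} follows at once, since the two bounds share the identical $\min\{\cdot,\cdot\}$ factor and it therefore suffices to check $R_k \geq \|v_k(1)\|_2$. This is precisely~\eqref{eq:nonincraesing} with $\beta_1 = 1$ and $\beta_2 = \beta_k$: as $\beta_k = \gamma^{i_k} \leq 1$ with $\gamma\in(0,1)$ and $i_k\in\N{}$, the ratio $\|v_k(\beta)/\beta\|_2$ is nonincreasing in $\beta$, so $\|v_k(1)\|_2 \leq \|v_k(\beta_k)/\beta_k\|_2 = R_k$.

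Finally, for~\eqref{diff-m-no-assumptions} I translate the model decrease into a decrease of $\|c_k + J_k\cdot\|_2$. Using $m_k(v) = \tfrac12\|c_k+J_kv\|_2^2$ and factoring the difference of squares,
\[
m_k(0) - m_k(v_k^c) = \tfrac12\big(\|c_k\|_2 - \|c_k+J_kv_k^c\|_2\big)\big(\|c_k\|_2 + \|c_k+J_kv_k^c\|_2\big).
\]
The model decrease is nonnegative (it bounds below the left side of~\eqref{eq:cauchy-decrease-projection-corollary-1}), so $\|c_k+J_kv_k^c\|_2 \leq \|c_k\|_2$, and with $\|c_k\|_2 \leq \kappa_c$ from~\eqref{eq:consequence-of-basic-assumption} the second factor is at most $2\kappa_c$. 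Thus $\|c_k\|_2 - \|c_k+J_kv_k^c\|_2 \geq (m_k(0)-m_k(v_k^c))/\kappa_c$, and substituting~\eqref{eq:cauchy-decrease-projection-corollary-2} yields~\eqref{diff-m-no-assumptions}. I do not expect a real obstacle: the entire argument rests on the two substitutions $\delta_k \geq R_k$ and $R_k \geq \|v_k(1)\|_2$ and on the denominator bound $\|c_k\|_2 + \|c_k+J_kv_k^c\|_2 \leq 2\kappa_c$, with the only care needed being the correct reindexing of the minimum in the first step so that the inequality $\delta_k/R_k \geq 1$ acts on the right argument.
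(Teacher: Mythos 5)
Your proof is correct and follows essentially the same route as the paper: \eqref{eq:cauchy-decrease-projection-corollary-1} from Lemma~\ref{lem:cauchy-decrease-projection} combined with $\delta_k \geq \|v_k(\beta_k)\|_2/\beta_k$ from~\eqref{eq:lower-bound-ratio}, \eqref{eq:cauchy-decrease-projection-corollary-2} from the monotonicity~\eqref{eq:nonincraesing} with $\beta_k \leq 1$, and \eqref{diff-m-no-assumptions} via the same difference-of-squares factorization. The only (harmless) cosmetic difference is that you bound the sum factor directly by $2\kappa_c$ rather than dividing by $2\|c_k\|_2$, which lets you skip the paper's separate treatment of the case $\|c_k\|_2 = 0$.
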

\begin{proof}
Inequality~\eqref{eq:cauchy-decrease-projection-corollary-1} follows from Lemma~\ref{lem:cauchy-decrease-projection}, $v_k^c = v_k(\beta_k)$, and~\eqref{eq:lower-bound-ratio} with $\beta = \beta_k$. Inequality~\eqref{eq:cauchy-decrease-projection-corollary-2} follows from~\eqref{eq:nonincraesing} since $\beta_k \leq 1$.

It follows from \eqref{eq:cauchy-decrease-projection-corollary-1} that  $\|c_k+J_kv_k^c\|_2 \leq \|c_k\|_2$.  If $\|c_k\|_2 = 0$, then \eqref{diff-m-no-assumptions} follows trivially.  Otherwise, it follows from $\|c_k+J_kv_k^c\|_2 \leq \|c_k\|_2$ that
\begin{equation}~\label{eq:feasibility-reduction-inequality-new}
\begin{aligned}
\|c_k\|_2^2 - \|c_k + J_k v_k^c\|_2^2 &= (\|c_k\|_2 + \|c_k + J_k v_k^c\|_2)(\|c_k\|_2 - \|c_k + J_k v_k^c\|_2) \\
&\leq 2\|c_k\|_2 (\|c_k\|_2 - \|c_k + J_k v_k^c\|_2).
\end{aligned}
\end{equation}
Combining~\eqref{eq:feasibility-reduction-inequality-new} and \eqref{eq:cauchy-decrease-projection-lemma} we have
\begin{align*}
2\|c_k\|_2(\|c_k\|_2 - \|c_k + J_k v_k^c\|_2) 
&\geq \|c_k\|_2^2 - \|c_k + J_k v_k^c\|_2^2 
= 2(m_k(0) - m_k(v_k^c)) \\ 
&\geq 2\kappa_1 \|v_k(1)\|_2^2 \min \left\{\frac{1}{1+ \|J_k^T J_k\|_2}, \kappa_v \alpha_k\right\}.
\end{align*}
Diving both sides by $2\|c_k\|_2$ and using~\eqref{eq:consequence-of-basic-assumption} gives~\eqref{diff-m-no-assumptions}.
\end{proof}

Our next lemma relates the computation of $v_k$ to the measure $\delta_k$. We suspect the first result is well-known in the literature but we could not find a suitable reference.

\begin{lemma}\label{lem:opt-equivalence}
The following results hold.
\begin{itemize}
\item[(i)]
If $\|v_k(1)\|_2 = 0$, then $\delta_k = 0$.
\item[(ii)] $\|v_k\|_2 = 0$ if and only if $\delta_k = 0$.
\item[(iii)] If $\delta_k = 0$, then $x_k$ is a first-order KKT point for problem~\eqref{prob:isp}.
\end{itemize}
\end{lemma}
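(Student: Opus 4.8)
The plan is to establish the three parts in order, drawing on the monotonicity and limit identities of Lemma~\ref{lem:vk-monotonicity}, the Cauchy decrease guarantee of Lemma~\ref{cor:cauchy-decrease}, and the polarity between the tangent and normal cones encoded in Definitions~\ref{def:normal-cone} and~\ref{def:tangent-cone}. For part~(i), I would start from the hypothesis $\|v_k(1)\|_2 = 0$ and use~\eqref{eq:nondecreasing} to conclude $\|v_k(\beta)\|_2 \leq \|v_k(1)\|_2 = 0$ for every $\beta \in (0,1]$, so that $v_k(\beta)/\beta = 0$ on this interval. Passing to the limit $\beta \to 0^+$ and invoking~\eqref{eq:limiting-vector} gives $\nabla_{\Omega}\psi(x_k) = 0$, and therefore $\delta_k = \|\nabla_{\Omega}\psi(x_k)\|_2 = 0$ by~\eqref{def:deltak}.

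For part~(ii), the forward implication $\delta_k = 0 \Rightarrow \|v_k\|_2 = 0$ is immediate, since Algorithm~\ref{alg:main} assigns $v_k \gets 0$ precisely when $\delta_k = 0$. The reverse implication I would prove by contradiction: assume $\|v_k\|_2 = 0$ but $\delta_k \neq 0$. Then $v_k = 0$, so the Cauchy-point condition in~\eqref{eq:vk-condition} yields $m_k(0) = m_k(v_k) \leq m_k(v_k^c)$, i.e. $m_k(0) - m_k(v_k^c) \leq 0$. Since $\delta_k \neq 0$, Lemma~\ref{cor:cauchy-decrease} applies and inequality~\eqref{eq:cauchy-decrease-projection-corollary-2} gives $m_k(0) - m_k(v_k^c) \geq \kappa_1 \|v_k(1)\|_2^2 \min\{(1+\|J_k^T J_k\|_2)^{-1}, \kappa_v \alpha_k\} \geq 0$. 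Because $\kappa_1 > 0$, $\alpha_k > 0$, and the minimum is strictly positive, these two bounds force $\|v_k(1)\|_2 = 0$, whence part~(i) delivers $\delta_k = 0$, contradicting $\delta_k \neq 0$.

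For part~(iii), I would identify $\psi(x) = \tfrac{1}{2}\|c(x)\|_2^2$ as the objective of~\eqref{prob:isp} with $\nabla\psi(x_k) = J_k^T c_k$, so that the first-order KKT condition for $\min_{x\in\Omega}\psi(x)$ over the convex set $\Omega$ is $-\nabla\psi(x_k) \in N_{\Omega}(x_k)$. The key observation is that, since $T_{\Omega}(x_k)$ is a closed convex cone whose polar is $N_{\Omega}(x_k)$ (Definition~\ref{def:tangent-cone} together with the bipolar relation), the projection of any vector onto $T_{\Omega}(x_k)$ vanishes if and only if that vector lies in $N_{\Omega}(x_k)$. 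Applying this to $-\nabla\psi(x_k) = -J_k^T c_k$ shows $\delta_k = \|\Proj_{T_{\Omega}(x_k)}(-J_k^T c_k)\|_2 = 0$ is equivalent to $-\nabla\psi(x_k)\in N_{\Omega}(x_k)$, so $\delta_k = 0$ implies $x_k$ is a first-order KKT point for~\eqref{prob:isp}.

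The main obstacle is the reverse direction of part~(ii), the only part requiring a genuine argument: one must recognize that setting $v_k = 0$ in the acceptance condition $m_k(v_k) \leq m_k(v_k^c)$ collides with the strictly positive lower bound on the Cauchy decrease, and then feed the resulting $\|v_k(1)\|_2 = 0$ back through part~(i). The one subtlety to state explicitly in part~(iii) is the standard fact that projection onto a closed convex cone returns the origin exactly when the projected vector belongs to the polar cone; everything else reduces to the limit and monotonicity identities already available.
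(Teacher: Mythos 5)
Your proof is correct, and two of its three parts follow a genuinely different route from the paper's. For part (i), the paper argues directly from $\Proj_{\Omega}(x_k - J_k^Tc_k) = x_k$ via the projection theorem that $-J_k^Tc_k \in N_{\Omega}(x_k)$, then uses tangent--normal polarity to show $v=0$ minimizes $\|v + J_k^Tc_k\|_2$ over $T_{\Omega}(x_k)$; you instead combine the monotonicity bound~\eqref{eq:nondecreasing} (which forces $v_k(\beta)=0$ for all $\beta\in(0,1]$) with the limit identity~\eqref{eq:limiting-vector}. Your route is shorter and reuses Lemma~\ref{lem:vk-monotonicity} wholesale, at the cost of leaning on the McCormick--Tapia limit where the paper's argument is purely algebraic. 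Part (ii) is essentially the paper's argument: the same collision between the acceptance condition $m_k(v_k)\le m_k(v_k^c)$ and the strictly positive Cauchy decrease of Lemma~\ref{cor:cauchy-decrease}, merely packaged as a contradiction rather than a direct implication (your framing has the minor virtue of making explicit that the Cauchy point is only defined when $\delta_k\neq 0$). For part (iii) the paper simply cites \cite[Lemma 3.1(c)]{calamai1987projected}, whereas you supply a self-contained derivation from the standard fact that the projection onto the closed convex cone $T_{\Omega}(x_k)$ vanishes exactly when the projected vector lies in its polar $N_{\Omega}(x_k)$; this is a correct and complete substitute for the citation.
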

\begin{proof}
To prove part (i), we suppose that $\|v_k(1)\|_2 = 0$. Note that $0 = \|v_k(1)\|_2 = \|\text{Proj}_{ \Omega}(x_k-J_k^T c_k) - x_k\|_2$ implies that $\text{Proj}_{ \Omega}(x_k-J_k^T c_k) = x_k$. Using this fact, we can apply the  projection theorem~\cite[Proposition 1.1.9]{bertsekas2009convex} to obtain
\begin{equation*}
(-J_k^Tc_k)^T (z-x_k) = (x_k - J_k^Tc_k - x_k)^T (z-x_k) \leq 0 \ \text{for all $z\in\Omega$,}
\end{equation*}
which is equivalent to $-J_k^Tc_k \in N_{\Omega} (x_k)$. It now follows from  Definition~\ref{def:tangent-cone} that
\begin{equation}~\label{eq:duality-normal-tangent}
(-J_k^Tc_k)^T v \leq 0 \ \text{for all $v\in T_{\Omega}(x_k)$.}
\end{equation} 
Using~\eqref{eq:duality-normal-tangent} and nonnegativity of norms, we find that
\begin{equation*}\label{eq:objective-lb}
\tfrac{1}{2} \| v + J_k^T c_k \|_2^2 = \tfrac{1}{2} \left( \|v\|_2^2 + 2v^T J_k^T c_k + \|J_k^T c_k\|_2^2 \right) \geq \tfrac{1}{2}\|J_k^T c_k\|_2^2 \ \text{for all $v\in T_{\Omega}(x_k)$.}
\end{equation*}
It follows from this inequality and 
$\frac{1}{2} \| v + J_k^T c_k \|_2^2$ being strongly convex in $v$ that
$$
0 = \mathop{\arg \min}\limits_{v \in T_{\Omega}(x_k)} \tfrac{1}{2} \|v + J_k^Tc_k\|_2^2 = \mathop{\arg \min}\limits_{v \in T_{\Omega}(x_k)} \|v + J_k^Tc_k\|_2
= \Proj_{T_{\Omega}(x_k)}(-J_k^Tc_k) = \nabla_\Omega(\psi(x_k)).
$$
It now follows from~\eqref{def:deltak} that $\delta_k = 0$, which completes the proof of part (i).

To prove part (ii), we first observe from Algorithm~\ref{alg:main} that if $\delta_k = 0$ then $v_k = 0$.  Thus, it remains to prove that if $v_k = 0$, then $\delta_k = 0$. To do this, let us assume that $v_k = 0$. It follows from the third condition in~\eqref{eq:vk-condition} and Lemma~\ref{cor:cauchy-decrease} that
\begin{equation*}
0 = m_k(0)-m_k(v_k) \geq m_k(0)-m_k(v_k^c) \geq \kappa_1 \|v_k(1)\|_2^2 \min \left\{\frac{1}{1+ \|J_k^T J_k\|_2}, \kappa_v \alpha_k \right\}.
\end{equation*}
Since $\kappa_1$, $\kappa_v$, and $\alpha_k$ are strictly positive, it follows that $\|v_k(1)\|_2 = 0$.  We can combine this result with part (i) to conclude that 
$\delta_k = 0$, which completes the proof.

The proof of part (iii) is provided in~\cite[Lemma 3.1(c)]{calamai1987projected}.
\end{proof}

\subsection{Subproblem (\ref{subprob:stationarity})}
With respect to subproblem~\eqref{subprob:stationarity}, we recall that $u = 0$ is feasible, the constraints are linear (meaning that the feasible region is convex and that a constraint qualification holds), and the objective function is strongly convex. Therefore, the unique solution $u_k$ to subproblem~\eqref{subprob:stationarity} satisfies, for some  $g_{r,k} \in \partial r(x_k + v_k + u_k)$, $y_k \in \R{m}$, and $z_k \in \R{n}$, the following conditions: 
\begin{subequations}\label{kkt:u}
\begin{align}
g_k + \tfrac{1}{\alpha_k}u_k + \tfrac{1}{\alpha_k}v_k + g_{r,k} + J_k^T y_k + z_k &= 0,\label{kkt:stationary} \\
J_ku_k &= 0, \label{kkt:nullspace}\ \ \text{and}\\
\|\min\{x_k + v_k + u_k, -z_k\}\|_2 &= 0, \label{kkt:complementarity}
\end{align}
\end{subequations}
where the minimum of two vectors is taken componentwise. These conditions characterize $u_k$ and will play a critical role in the analysis of Section~\ref{sec:analysis}.  In particular, they allow us to establish the following bound on the size of the trial step.

\begin{lemma}\label{lem:snorm-bound}
The trial step $s_k$ satisfies $\|s_k\|_2 \geq \|\min\{x_k,-z_k\}\|_2$.
\end{lemma}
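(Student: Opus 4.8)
The plan is to prove the inequality componentwise and then sum. Recall that $s_k = v_k + u_k$, so $x_k + s_k = x_k + v_k + u_k$. By the feasibility conditions (namely $x_k\in\Omega$ together with the membership $x_k+v_k+u_k\in\Omega$ guaranteed by subproblem~\eqref{subprob:stationarity}), every component satisfies $(x_k)_i\geq 0$ and $(x_k+s_k)_i\geq 0$. The complementarity condition~\eqref{kkt:complementarity} states that $\min\{x_k+s_k,-z_k\}=0$ componentwise, which I would unpack as: for each $i\in[n]$ one has $(x_k+s_k)_i\geq 0$, $-(z_k)_i\geq 0$, and $(x_k+s_k)_i\cdot(-(z_k)_i)=0$. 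Because both $(x_k)_i$ and $-(z_k)_i$ are nonnegative, the target quantity $\min\{(x_k)_i,-(z_k)_i\}$ is itself nonnegative, so $\big|\min\{(x_k)_i,-(z_k)_i\}\big|=\min\{(x_k)_i,-(z_k)_i\}$. Hence it suffices to prove the single-index bound $|(s_k)_i|\geq \min\{(x_k)_i,-(z_k)_i\}$ for all $i$.

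I would establish this bound by a two-case split on the sign of $(x_k+s_k)_i$. If $(x_k+s_k)_i>0$, then the complementarity product forces $-(z_k)_i=0$, so that $\min\{(x_k)_i,-(z_k)_i\}=\min\{(x_k)_i,0\}=0$ (using $(x_k)_i\geq0$); since $|(s_k)_i|\geq 0$ the bound holds trivially. If instead $(x_k+s_k)_i=0$, then $(s_k)_i=(x_k+s_k)_i-(x_k)_i=-(x_k)_i$, so $|(s_k)_i|=(x_k)_i\geq \min\{(x_k)_i,-(z_k)_i\}$, where the last inequality is just the fact that a minimum is no larger than either of its arguments. In both cases the desired componentwise inequality holds.

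Summing squares over $i\in[n]$ and then taking square roots yields
\begin{equation*}
\|s_k\|_2^2=\sum_{i=1}^{n}(s_k)_i^2\geq \sum_{i=1}^{n}\big(\min\{(x_k)_i,-(z_k)_i\}\big)^2=\|\min\{x_k,-z_k\}\|_2^2,
\end{equation*}
which gives the claimed bound. I do not anticipate a genuine obstacle here, as the argument is a clean componentwise case analysis; the only point requiring care is the strictly-positive case, where one must combine the feasibility-driven sign information with the complementarity product in~\eqref{kkt:complementarity} to conclude $(z_k)_i=0$ and thereby annihilate the right-hand side.
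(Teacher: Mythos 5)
Your proof is correct and follows essentially the same route as the paper's: both arguments reduce the claim to a componentwise inequality via the complementarity condition~\eqref{kkt:complementarity} and the nonnegativity of $x_k$ and $-z_k$, and then conclude by summing squares. The only cosmetic difference is that the paper derives the componentwise bound from the inequality $\min\{a+b,c\}\geq\min\{a,c\}+\min\{b,0\}$ applied to $0=\min\{[x_k+s_k]_i,-[z_k]_i\}$, whereas you split into cases according to whether $[x_k+s_k]_i$ is zero or positive.
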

\begin{proof}
It follows from $s_k = v_k + u_k$ and \eqref{kkt:u} that 
\begin{equation}\label{eq:stationarity-old}
-\tfrac{1}{\alpha_k}s_k = g_k + g_{r,k} + J_k^T y_k +z_k \ \ \text{and} \ \
\|\min\{x_k + s_k, -z_k\}\|_2 = 0.
\end{equation}
The latter equality and min-inequalities give, for each $i\in\{1,2,\dots, n\}$, that 
\begin{align*}
0 &= \min\{[x_k + s_k]_i,-[z_k]_i\}
\geq \min\{[x_k]_i,-[z_k]_i\} + \min\{[s_k]_i, 0\}.
\end{align*}
Combining this inequality with $\min\{[x_k]_i,-[z_k]_i\} \geq 0$ gives
$0 \leq \min\{[x_k]_i,-[z_k]_i\} \leq -\min\{[s_k]_i,0\}$. 
It follows from this inequality that
\begin{align*}
\|\min\{x_k,-z_k\}\|_2^2
&= \sum_{i=1}^n |\min\{[x_k]_i,-[z_k]_i\}|^2 \\
&\leq \sum_{i=1}^n |\min\{[s_k]_i,0\}|^2
\leq \sum_{i=1}^n |[s_k]_i|^2
= \|s_k\|_2^2.
\end{align*}
Taking the square-root of both sides of this inequality completes the proof.
\end{proof}

\section{Analysis}\label{sec:analysis} 
In this section, we present a complete convergence analysis for Algorithm~\ref{alg:main} in both the finite termination case and infinite iteration case.

\subsection{Finite termination}
Our first result shows that the solutions to our subproblems that define the trial step are both zero precisely when the trial step is zero.
\begin{lemma}\label{lem:sk-vk-uk-0}
$s_k = 0$ if and only if $v_k = u_k = 0$.
\end{lemma}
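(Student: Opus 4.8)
The plan is to prove the two implications separately, with the reverse direction being immediate and the forward direction carrying all the content. Since $s_k = v_k + u_k$ by definition, if $v_k = u_k = 0$ then trivially $s_k = 0$; so the only substantive claim is that $s_k = 0$ forces both $v_k = 0$ and $u_k = 0$.

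For the forward direction, I would first transfer the null-space property of $u_k$ onto $v_k$. Assuming $s_k = 0$ gives $u_k = -v_k$, and since the unique solution of subproblem~\eqref{subprob:stationarity} satisfies $J_k u_k = 0$ by~\eqref{kkt:nullspace}, it follows that $J_k v_k = 0$ as well. Consequently $c_k + J_k v_k = c_k$, so that $m_k(v_k) = \tfrac12\|c_k + J_k v_k\|_2^2 = \tfrac12\|c_k\|_2^2 = m_k(0)$; in other words, $v_k$ achieves no reduction in the feasibility model.

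The next step is to argue that such a lack of reduction is possible only when $\delta_k = 0$, which I would do by splitting on whether $\delta_k = 0$. If $\delta_k = 0$, then the algorithm sets $v_k = 0$ directly (equivalently, by Lemma~\ref{lem:opt-equivalence}(ii)), whence $u_k = -v_k = 0$ and the claim holds. If instead $\delta_k \neq 0$, then the Cauchy point $v_k^c$ is formed and the feasibility-decrease estimates apply: combining the acceptance condition $m_k(v_k) \le m_k(v_k^c)$ from~\eqref{eq:vk-condition} with Lemma~\ref{cor:cauchy-decrease} yields
$$
0 = m_k(0) - m_k(v_k) \ge m_k(0) - m_k(v_k^c) \ge \kappa_1 \|v_k(1)\|_2^2 \min\Big\{\tfrac{1}{1+\|J_k^TJ_k\|_2},\, \kappa_v\alpha_k\Big\} \ge 0.
$$
Since $\kappa_1 > 0$, $\kappa_v > 0$, and $\alpha_k > 0$, the minimum term is strictly positive, forcing $\|v_k(1)\|_2 = 0$; Lemma~\ref{lem:opt-equivalence}(i) then gives $\delta_k = 0$, contradicting $\delta_k \ne 0$. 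Hence this case is impossible, leaving only $\delta_k = 0$ and completing the argument.

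The main obstacle is less a genuine difficulty than a bookkeeping subtlety: the Cauchy-decrease machinery of Lemma~\ref{cor:cauchy-decrease} is predicated on $\delta_k \ne 0$ (the regime in which $v_k^c$ is actually computed), so the case distinction is essential and the decrease bound cannot be invoked unconditionally. The key insight that makes everything fall into place is recognizing that $s_k = 0$ propagates the null-space property of $u_k$ to $v_k$, collapsing $m_k(v_k)$ to $m_k(0)$ and thereby tying the trial step directly to the feasibility measure $\delta_k$ via Lemma~\ref{lem:opt-equivalence}.
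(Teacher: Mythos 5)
Your proof is correct and follows essentially the same route as the paper's: both arguments hinge on the observation that $s_k=0$ together with $J_ku_k=0$ forces $v_k=-u_k$ into the nullspace of $J_k$, which is incompatible with the decrease guaranteed by the Cauchy point unless $\delta_k=0$. The only difference is cosmetic: you realize the contradiction through the model-decrease chain of Lemma~\ref{cor:cauchy-decrease} (forcing $\|v_k(1)\|_2=0$ and hence $\delta_k=0$, exactly as in the proof of Lemma~\ref{lem:opt-equivalence}(ii)), whereas the paper invokes the inner-product bound~\eqref{eq:ip-bound} to show $J_kv_k^c\neq 0$; your variant is, if anything, slightly more careful in that it contradicts a property of $v_k$ itself rather than of $v_k^c$.
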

\begin{proof}
Since $s_k = v_k + u_k$, it follows that if $v_k = u_k = 0$, then $s_k = 0$.  Thus, it remains to prove that if $s_k = 0$, then $v_k = u_k = 0$.  For a proof by contradiction, suppose that $s_k = 0$ and $v_k \neq 0 $. It follows from Lemma~\ref{lem:opt-equivalence}(i)(ii) that $v_k(1) \neq 0$, so that  Lemma~\ref{cor:cauchy-decrease} gives  $v_k^c \neq 0$. We may now combine this result with~\eqref{eq:ip-bound} to obtain
\begin{equation*}
c_k^T J_kv_k^c 
= (J_k^T c_k)^T v_k^c 
= \nabla m_k(0)^T v_k^c
\leq -\|v_k^c\|_2^2/\beta_k < 0,
\end{equation*}
which implies that $J_kv_k^c \neq 0$, i.e., that $v_k^c$ is not in the nullspace of $J_k$. At the same time, we know from~\eqref{kkt:nullspace} that $u_k$ is in the nullspace of $J_k$. The previous two statements cannot both be true since $s_k = v_k + u_k = 0$ implies that $v_k = - u_k$, which is a contradiction.  Therefore, we must conclude that $v_k = 0$. Combining this result with $s_k = v_k + u_k = 0$ shows that $u_k = 0$, and completes the proof.
\end{proof}

We can now state our finite termination results for Algorithm~\ref{alg:main}.
\begin{theorem}
The following finite termination results hold for Algorithm~\ref{alg:main}. 
\begin{itemize}
\item[(i)] If Algorithm~\ref{alg:main} terminates at Line~\ref{line:ifs}, then $x_k$ is an infeasible stationary point, i.e., $x_k$ is a first-order KKT point for problem~\eqref{prob:isp} and $\|c_k\|_2 \neq 0$.
\item[(ii)] If Algorithm~\ref{alg:main} terminates at Line~\ref{line:first.order.kkt}, then $x_k$ is a first-order KKT point for problem~\eqref{prob:general}.
\end{itemize}
\end{theorem}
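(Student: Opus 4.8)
The plan is to read off both parts almost directly from the algorithm's termination logic together with the subproblem characterizations already established. For part (i), termination at Line~\ref{line:ifs} can only occur by passing through the branch in Lines~\ref{line:deltak}--\ref{line:ifs.end}, which requires both $\delta_k = 0$ (so that $v_k \gets 0$ is set) and $\|c_k\|_2 \neq 0$. The latter condition is exactly the infeasibility claim. For the stationarity claim I would invoke Lemma~\ref{lem:opt-equivalence}(iii), which states that $\delta_k = 0$ implies $x_k$ is a first-order KKT point for problem~\eqref{prob:isp}. Together these give that $x_k$ is an infeasible stationary point, completing part (i) with essentially no additional work.

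For part (ii), termination at Line~\ref{line:first.order.kkt} occurs exactly when $\|s_k\|_2/\alpha_k = 0$; since $\alpha_k > 0$, this is equivalent to $s_k = 0$, which by Lemma~\ref{lem:sk-vk-uk-0} forces $v_k = u_k = 0$. The first step of the argument is to recover feasibility: from $v_k = 0$ and Lemma~\ref{lem:opt-equivalence}(ii) we obtain $\delta_k = 0$, and since the algorithm reached Line~\ref{line:first.order.kkt} rather than returning earlier at Line~\ref{line:ifs}, the guard $\|c_k\|_2 \neq 0$ must have failed, so $c_k = 0$. This establishes primal feasibility for the equality constraints (feasibility with respect to $\Omega$ holds by construction, since $x_k\in\Omega$).

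The second step is to extract the stationarity and complementarity conditions from the optimality system~\eqref{kkt:u} of subproblem~\eqref{subprob:stationarity}. Substituting $v_k = u_k = 0$ into~\eqref{kkt:stationary} yields $g_k + g_{r,k} + J_k^T y_k + z_k = 0$ with $g_{r,k}\in\partial r(x_k)$, while~\eqref{kkt:complementarity} reduces to $\|\min\{x_k,-z_k\}\|_2 = 0$, and~\eqref{kkt:nullspace} becomes vacuous. Combined with $c_k = 0$, these are precisely the first-order KKT conditions for problem~\eqref{prob:general}, so $x_k$ is a first-order KKT point as claimed.

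The only real subtlety, and the step I would be most careful about, is the feasibility deduction in part (ii): the conclusion $c_k = 0$ is not immediate from $s_k = 0$ alone but relies on the control-flow fact that reaching Line~\ref{line:first.order.kkt} precludes having returned at Line~\ref{line:ifs}, chained through Lemma~\ref{lem:opt-equivalence}(ii) to pass from $v_k = 0$ to $\delta_k = 0$. Everything else is a direct substitution into previously derived characterizations, so no genuinely hard estimate arises.
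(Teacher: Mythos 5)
Your proposal is correct and follows essentially the same route as the paper's proof: part (i) via the control flow at Lines~\ref{line:deltak}--\ref{line:ifs.start} plus Lemma~\ref{lem:opt-equivalence}(iii), and part (ii) via Lemma~\ref{lem:sk-vk-uk-0}, Lemma~\ref{lem:opt-equivalence}(ii), the fact that the algorithm did not return at Line~\ref{line:ifs} (so $\|c_k\|_2 = 0$), and substitution of $v_k = u_k = 0$ into~\eqref{kkt:u}. No gaps.
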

\begin{proof}
We first prove part (i). If Algorithm~\ref{alg:main} terminates at Line~\ref{line:ifs}, then it follows from Lines~\ref{line:deltak} and~\ref{line:ifs.start} that $\delta_k = 0$ and $\|c_k\|_2 \neq 0$.  It now follows from $\delta_k = 0$ and Lemma~\ref{lem:opt-equivalence}(iii) that $x_k$ is a first-order KKT point for problem~\eqref{prob:isp}, as claimed.

For part (ii), we know that if Algorithm~\ref{alg:main} terminates in Line~\ref{line:first.order.kkt} then $s_k = 0$, which from Lemma~\ref{lem:sk-vk-uk-0} implies that $u_k=v_k=0$, and then Lemma~\ref{lem:opt-equivalence}(ii) implies that $\delta_k = 0$. Since termination did not occur in Line~\ref{line:ifs} of Algorithm~\ref{alg:main}, we know that $\|c_k\|_2 = 0$.  It follows from $v_k=u_k=0$ and~\eqref{kkt:u} that there exists $g_{r,k} \in \partial r(x_k)$, $y_k \in \R{m}$, and $z_k \in \R{n}$ satisfying $g_k + g_{r,k} + J_k^T y_k + z_k = 0$ and $\|\min\{x_k,-z_k\}\|_2 = 0$. These equations and $\|c_k\|_2 = 0$ show that $x_k$ is a first-order KKT point for~\eqref{prob:general}. 
\end{proof}

\subsection{Infinite iterations}
We now consider the scenario where finite termination does not occur, meaning that Algorithm~\ref{alg:main} performs an infinite number of iterations. 

\subsubsection{Analysis under no constraint qualification}\label{sec:no-cq}

In this section, we analyze properties of the iterate sequence $\{x_k\}$ generated by Algorithm~\ref{alg:main} when no constraint qualification is assumed to hold. The key metric we consider is
\begin{equation}\label{def:chibark}
\chibar_k := \max\left\{\|g_k + g_{r,k} + J_k^T y_k + z_k\|_2,\|v_k(1)\|_2, \|\max\{x_k,-z_k\}\|_2 \right\},
\end{equation}
where $g_{r,k} \in \R{n}$, $y_k \in \R{m}$, and $z_k \in \R{n}$ are defined as those quantities satisfying~\eqref{kkt:u}. The first quantity in the max is a measure of stationarity for problem~\eqref{prob:general}, the second quantity is a stationarity measure for problem~\eqref{prob:isp}, and the third quantity measures feasibility with respect to $x_k\in\Omega$, the sign of the Lagrange multiplier estimate $z_k$, and complementarity.  In particular, we emphasize that $\|v_k(1)\|_2$ is used here in place of $\|c_k\|_2$ since a constraint qualification is not assumed to hold in this section, meaning that it is possible that the iterates do not converge toward feasibility.

Our first result gives a uniform upper bound on the sequence $\{\delta_k\}$ defined in~\eqref{def:deltak}.

\begin{lemma}\label{lem:vk(1)-bound}
For all iterations $k \in \N{}$, we have that 
\begin{equation}~\label{eq:deltak-ub}
\delta_k \equiv \|\nabla_{\Omega} \psi(x_k)\|_2 \leq 2\kappaJ \|c_k\|_2 \leq 2\kappa_J \kappa_c.
\end{equation}
\end{lemma}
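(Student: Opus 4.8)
The plan is to bound $\delta_k$ directly through its definition as a projection of $-J_k^T c_k$ onto the tangent cone $T_\Omega(x_k)$, exploiting the nonexpansiveness of the Euclidean projection onto a closed convex set. First I would record that, since $\psi(x)=\tfrac12\|c(x)\|_2^2$ has gradient $\nabla\psi(x_k)=J_k^T c_k$, Definition~\ref{def:projected-steepest-descent} together with~\eqref{def:deltak} gives $\delta_k=\|\Proj_{T_\Omega(x_k)}(-J_k^T c_k)\|_2$. This reduces the claim to bounding the norm of a single projection.

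Next I would use two standard facts about $T_\Omega(x_k)$: it is a closed convex cone (being the polar of the normal cone, cf.\ Definition~\ref{def:tangent-cone}), so it contains the origin and hence $\Proj_{T_\Omega(x_k)}(0)=0$; and the projection onto any closed convex set is nonexpansive. Combining these, $\delta_k = \|\Proj_{T_\Omega(x_k)}(-J_k^T c_k)-\Proj_{T_\Omega(x_k)}(0)\|_2 \le \|{-}J_k^T c_k\|_2 = \|J_k^T c_k\|_2$. Applying $\|J_k^T c_k\|_2\le\|J_k\|_2\|c_k\|_2\le\kappaJ\|c_k\|_2$ from~\eqref{eq:consequence-of-basic-assumption} yields $\delta_k\le\kappaJ\|c_k\|_2\le 2\kappaJ\|c_k\|_2$, and a final application of $\|c_k\|_2\le\kappac$ from~\eqref{eq:consequence-of-basic-assumption} delivers the stated $2\kappaJ\kappac$. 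I note that this route in fact proves the stronger bound $\kappaJ\|c_k\|_2$; the factor of two in the statement is slack, so there is no difficulty in meeting it.

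An equivalent in-house argument avoids projecting onto the cone: by nonexpansiveness of $\Proj_\Omega$ and $x_k\in\Omega$, the vector $v_k(\beta)=\Proj_\Omega(x_k-\beta J_k^T c_k)-\Proj_\Omega(x_k)$ from~\eqref{def:vkbeta} satisfies $\|v_k(\beta)\|_2\le\beta\|J_k^T c_k\|_2$, so $\|v_k(\beta)/\beta\|_2\le\kappaJ\|c_k\|_2$ for every $\beta>0$; letting $\beta\to0^+$ and invoking the limit~\eqref{eq:limiting-vector} of Lemma~\ref{lem:vk-monotonicity} again recovers $\delta_k\le\kappaJ\|c_k\|_2$. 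Either way the computation is routine; the only point requiring genuine care is the first step, namely correctly identifying $\delta_k$ with the norm of the projection of $-J_k^T c_k$ and justifying that projecting the origin returns the origin. I do not anticipate a real obstacle here.
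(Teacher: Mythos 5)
Your argument is correct, and it actually proves a slightly stronger bound than the paper does, via a different mechanism. The paper's proof does not invoke nonexpansiveness at all: it uses only that $\nabla_{\Omega}\psi(x_k)$ minimizes $\|v+J_k^Tc_k\|_2$ over $v\in T_{\Omega}(x_k)$ and that $v=0$ is feasible, so the optimal value satisfies $\|\nabla_{\Omega}\psi(x_k)+J_k^Tc_k\|_2\le\|J_k^Tc_k\|_2$; a triangle inequality then yields $\delta_k\le 2\|J_k^Tc_k\|_2\le 2\kappaJ\|c_k\|_2$, which is exactly where the factor of two in~\eqref{eq:deltak-ub} originates. You instead exploit that $T_{\Omega}(x_k)$ is a closed convex cone containing the origin, so $\Proj_{T_{\Omega}(x_k)}(0)=0$ and nonexpansiveness of the projection gives $\delta_k\le\|J_k^Tc_k\|_2\le\kappaJ\|c_k\|_2$ outright, improving the constant by a factor of two (the stated bound then holds with slack). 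Your alternative route through $\|v_k(\beta)/\beta\|_2\le\kappaJ\|c_k\|_2$ and the limit~\eqref{eq:limiting-vector} is likewise valid and gives the same sharper constant. What the paper's argument buys is minimal machinery --- it needs only feasibility of $0$ and the triangle inequality, not any structural property of the projection operator; what yours buys is the tighter bound, which is harmless here since the constant $2\kappaJ$ is only propagated into other constants (e.g., in Lemma~\ref{lem:denominator} and $\epsilon_\tau$) and never needs to be sharp.
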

\begin{proof}
Recall that $\nabla_{\Omega} \psi(x_k) = \mathop{\arg\min} \{\|v + J_k^Tc_k\|_2 : v \in T_{\Omega}(x_k)\}$. It follows from this fact, the triangle inequality, and $0 \in T_{\Omega}(x_k)$ that
\begin{equation*}
\|\nabla_{\Omega} \psi(x_k)\|_2 - \|J_k^Tc_k\|_2 \leq \|\nabla_{\Omega} \psi(x_k) + J_k^Tc_k\|_2 \leq \|J_k^Tc_k\|_2.
\end{equation*}
It follows from this inequality, how $\delta_k$ is defined in \eqref{def:deltak}, and Assumption~\ref{ass:basic} that $\delta_k \equiv \|\nabla_{\Omega} \psi(x_k)\|_2 \leq 2\|J_k^Tc_k\|_2  \leq 2\kappa_J \|c_k\|_2 \leq 2\kappa_J \kappa_c$, which completes the proof.
\end{proof}

We can now prove an upper bound on $A_k$ that is defined for $\tauktrial$.

\begin{lemma}\label{lem:denominator}
For all $k \in \N{}$, we have that
\begin{equation*}
g_k^T s_k + \tfrac{1}{2\alpha_k}\|s_k\|_2^2 + r(x_k+s_k) - r_k \leq 2(\kappag + \kappa_{\partial r}) \kappav \kappaJ \alpha_k \|c_k\|_2 + 2\kappav^2 \kappaJ^2 \kappac \alpha_k \|c_k\|_2. 
\end{equation*}
\end{lemma}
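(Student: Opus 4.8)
The plan is to bound $A_k := g_k^Ts_k + \tfrac{1}{2\alpha_k}\|s_k\|_2^2 + r(x_k+s_k) - r_k$ by exploiting the optimality of $u_k$ for subproblem~\eqref{subprob:stationarity}, thereby collapsing the estimate to one involving only $\|v_k\|_2$, and then invoking the norm bounds already available for $v_k$.

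First I would substitute $s_k = v_k + u_k$ and expand $\|s_k\|_2^2 = \|v_k\|_2^2 + 2v_k^Tu_k + \|u_k\|_2^2$. Regrouping terms, the quantity $A_k$ can be written as $g_k^Tv_k + \tfrac{1}{2\alpha_k}\|v_k\|_2^2 - r_k$ plus the expression $g_k^Tu_k + \tfrac{1}{2\alpha_k}\|u_k\|_2^2 + \tfrac{1}{\alpha_k}v_k^Tu_k + r(x_k+v_k+u_k)$, and this latter expression is exactly the objective of subproblem~\eqref{subprob:stationarity} evaluated at $u_k$. Since $u = 0$ is feasible for~\eqref{subprob:stationarity} and $u_k$ is its unique minimizer, this objective is bounded above by its value at $u = 0$, namely $r(x_k+v_k)$. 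Hence I obtain the key reduction
\begin{equation*}
A_k \leq g_k^Tv_k + \tfrac{1}{2\alpha_k}\|v_k\|_2^2 + r(x_k+v_k) - r_k.
\end{equation*}

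Next I would bound this right-hand side in terms of $\|v_k\|_2$ alone. The linear term satisfies $g_k^Tv_k \leq \kappag\|v_k\|_2$ by~\eqref{eq:consequence-of-basic-assumption}, while applying the subgradient inequality to $r$ at the point $x_k+v_k$ (which lies in $\Xcal$ by Assumption~\ref{ass:basic}) together with the bounded-subdifferential property gives $r(x_k+v_k) - r_k \leq \kappagr\|v_k\|_2$. These yield $A_k \leq (\kappag+\kappagr)\|v_k\|_2 + \tfrac{1}{2\alpha_k}\|v_k\|_2^2$. Finally, I would combine the step bound $\|v_k\|_2 \leq \kappav\alpha_k\delta_k$ from~\eqref{eq:vk-condition} with the estimate $\delta_k \leq 2\kappaJ\|c_k\|_2$ from Lemma~\ref{lem:vk(1)-bound} to obtain $\|v_k\|_2 \leq 2\kappav\kappaJ\alpha_k\|c_k\|_2$. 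Substituting this into the linear term produces $2(\kappag+\kappagr)\kappav\kappaJ\alpha_k\|c_k\|_2$, and substituting it into the quadratic term produces $2\kappav^2\kappaJ^2\alpha_k\|c_k\|_2^2$; bounding one factor of $\|c_k\|_2$ by $\kappac$ via~\eqref{eq:consequence-of-basic-assumption} then gives the stated second term $2\kappav^2\kappaJ^2\kappac\alpha_k\|c_k\|_2$, completing the bound.

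The hard part is really the first step: the observation that $A_k$ decomposes so that its entire $u_k$-dependence coincides with the subproblem objective is what makes optimality of $u_k$ usable and eliminates $u_k$ from the estimate. Everything afterward is a routine chain of Cauchy--Schwarz, the subgradient inequality, and the constant bounds from Assumption~\ref{ass:basic}, so I do not anticipate further difficulty.
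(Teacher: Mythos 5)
Your proposal is correct and follows essentially the same route as the paper's proof: decompose $A_k$ so that the $u_k$-dependent part is the objective of subproblem~\eqref{subprob:stationarity}, compare against the feasible point $u=0$, then apply the subgradient inequality, Cauchy--Schwarz, $\|v_k\|_2 \leq \kappav\alpha_k\delta_k$, and Lemma~\ref{lem:vk(1)-bound}. The only cosmetic difference is that you substitute the bound on $\|v_k\|_2$ before squaring whereas the paper carries $\delta_k$ through, which yields the identical constants.
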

\begin{proof}
By convexity of $r$, we know that 
\begin{equation}~\label{eq:subgradient.inequality}
r(x_k + v_k) - r_k \leq (g_{r,k}^v)^T v_k \ \text{for all} \ g_{r,k}^v \in \partial r(x_k + v_k).
\end{equation}
It now follows that
\begin{align*}
&g_k^T s_k + \tfrac{1}{2\alpha_k}\|s_k\|_2^2 + r(x_k+s_k) - r_k \\
\overset{(i)}{\leq} \ & g_k^T v_k + \tfrac{1}{2\alpha_k}\|v_k\|_2^2 + r(x_k+v_k) - r_k \\
\overset{(ii)}{\leq} \ & g_k^T v_k + \tfrac{1}{2\alpha_k}\|v_k\|_2^2 +(g_{r,k}^v)^T v_k \\
\overset{(iii)}{\leq} \ & (\|g_k\|_2 + \|g_{r,k}^v\|_2) \|v_k\|_2 + \tfrac{1}{2\alpha_k} \|v_k\|_2^2 \\
\overset{(iv)}{\leq} \ & (\|g_k\|_2 + \|g_{r,k}^v\|_2) \kappa_v \alpha_k \delta_k + \tfrac{1}{2\alpha_k} \kappa_v^2 \alpha_k^2 \delta_k^2 \\
\overset{(v)}{=} \ & (\|g_k\|_2 + \|g_{r,k}^v\|_2) \kappa_v \alpha_k \delta_k + \thalf\kappa_v^2 \alpha_k \delta_k^2 \\
\overset{(vi)}{\leq} \ & (\|g_k\|_2 + \|g_{r,k}^v\|_2) 2 \kappav \alpha_k \kappaJ \|c_k\|_2 + 2\kappav^2 \alpha_k \kappaJ^2 \kappac \|c_k\|_2 \\
\overset{(vii)}{\leq} \ & (\kappag + \kappa_{\partial r}) 2\kappav \kappaJ \alpha_k \|c_k\|_2 + 2\kappav^2 \kappaJ^2 \kappac \alpha_k \|c_k\|_2,
\end{align*}
where (i) follows from substituting $s_k = v_k + u_k$ and using the fact that $u_k = 0$ is a feasible solution to the tangential subproblem~\eqref{subprob:stationarity}, (ii) follows from~\eqref{eq:subgradient.inequality}, (iii) follows from the Cauchy-Schwartz inequality, (iv) follows from $\|v_k\|_2 \leq \kappa_v \alpha_k \delta_k$ in~\eqref{eq:vk-condition}, (v) follows from canceling an $\alpha_k$ from the second term, (vi) follows from Lemma~\ref{lem:vk(1)-bound} and~\eqref{eq:consequence-of-basic-assumption}, and (vii) follows from~\eqref{eq:consequence-of-basic-assumption}. This completes the proof. 
\end{proof}

The first part of the next lemma establishes that the merit parameter never needs to be decreased for any iteration $k \in \N{}$ such that $v_k(1)=0$. On the other hand, for all $k \in \N{}$ satisfying $v_k(1) \neq 0$, the second part of the lemma provides a lower bound on how small the previous merit parameter $\tau_{k-1}$ could have been when decreased.

\begin{lemma}\label{lem:tau-vk(1)}
The following merit parameter update results hold.
\begin{itemize}
\item[(i)] For each $k\in\N{}\setminus\{0\}$, if $v_k(1) = 0$, then $\tauktrial = \infty$ and $\tau_k \gets \tau_{k-1}$.
\item[(ii)] There exists a constant $\epsilon_{\tau} > 0$ such that, for all $k\in\N{}$ satisfying $\|v_k(1)\|_2 \neq 0$ and $\tau_k < \tau_{k-1}$, it holds that
$\tau_{k-1} \geq \epsilon_{\tau}\|v_k(1)\|_2^2$.
\end{itemize}
\end{lemma}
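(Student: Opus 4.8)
The plan is to treat the two parts separately, with part~(i) being a short consequence of the subproblem optimality conditions and part~(ii) requiring the quantitative bounds already established for the feasibility step.

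For part~(i), suppose $v_k(1)=0$. I would first invoke Lemma~\ref{lem:opt-equivalence}(i) to conclude $\delta_k = 0$, which by the construction in Algorithm~\ref{alg:main} forces $v_k = 0$ and hence $s_k = u_k$. The key observation is then that, with $v_k=0$, subproblem~\eqref{subprob:stationarity} reduces to minimizing $g_k^Tu + \tfrac{1}{2\alpha_k}\|u\|_2^2 + r(x_k+u)$ over its (convex) feasible set, for which $u=0$ is feasible with objective value $r_k$. Comparing the optimal value attained at $u_k$ against the value at $u=0$ yields $g_k^Tu_k + \tfrac{1}{2\alpha_k}\|u_k\|_2^2 + r(x_k+u_k) - r_k \leq 0$, which is exactly $A_k \leq 0$ since $s_k=u_k$. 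By the definition of $\tauktrial$ this gives $\tauktrial = \infty$, and then the update rule~\eqref{eq:tau-update} (the case $\tau_{k-1}\leq\tauktrial$) immediately gives $\tau_k \gets \tau_{k-1}$.

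For part~(ii), I would first reduce the claim to a lower bound on $\tauktrial$. From~\eqref{eq:tau-update}, the only way to have $\tau_k < \tau_{k-1}$ is that $\tau_{k-1} > \tauktrial$; in particular $\tauktrial$ must be finite, so $A_k > 0$ and $\tau_{k-1} > \tauktrial = (1-\sigma_c)(\|c_k\|_2 - \|c_k + J_ks_k\|_2)/A_k$. It therefore suffices to lower bound this fraction by a uniform multiple of $\|v_k(1)\|_2^2$. For the numerator I would use $J_ku_k=0$ (see~\eqref{kkt:nullspace}) to write $J_ks_k = J_kv_k$, then the Cauchy condition $m_k(v_k)\leq m_k(v_k^c)$ from~\eqref{eq:vk-condition} to get $\|c_k+J_ks_k\|_2 = \|c_k+J_kv_k\|_2 \leq \|c_k+J_kv_k^c\|_2$, and finally~\eqref{diff-m-no-assumptions} of Lemma~\ref{cor:cauchy-decrease} to obtain
\[
\|c_k\|_2 - \|c_k+J_ks_k\|_2 \geq \tfrac{\kappa_1}{\kappac}\|v_k(1)\|_2^2 \min\Big\{\tfrac{1}{1+\|J_k^TJ_k\|_2}, \kappav\alpha_k\Big\}.
\]
For the denominator I would apply Lemma~\ref{lem:denominator} to bound $A_k \leq \kappa \alpha_k \|c_k\|_2$, where $\kappa := 2(\kappag+\kappagr)\kappav\kappaJ + 2\kappav^2\kappaJ^2\kappac$.

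The remaining and most delicate step is to combine these two bounds into a constant $\epsilon_\tau$ that is independent of $k$, which hinges on the factor $\alpha_k$ cancelling or being controlled. I would use $\|J_k^TJ_k\|_2 \leq \kappaJ^2$, $\|c_k\|_2 \leq \kappac$, and the fact that $\alpha_k$ is non-increasing along the run (so that $\alpha_k \leq \alpha_0$), and then split on the minimum: when $\kappav\alpha_k$ is the smaller term the factor $\alpha_k$ cancels exactly between numerator and denominator, giving a bound proportional to $\|v_k(1)\|_2^2/\|c_k\|_2 \geq \|v_k(1)\|_2^2/\kappac$; when $1/(1+\|J_k^TJ_k\|_2)$ is the smaller term I would replace it by $1/(1+\kappaJ^2)$ and use $\alpha_k \leq \alpha_0$ in the denominator to again remove the $k$-dependence. (Note $\|c_k\|_2 > 0$ here, since $A_k>0$ together with $A_k \leq \kappa\alpha_k\|c_k\|_2$ rules out $\|c_k\|_2=0$.) Taking $\epsilon_\tau$ to be the minimum of the two resulting constants yields $\tauktrial \geq \epsilon_\tau\|v_k(1)\|_2^2$, and hence $\tau_{k-1} > \tauktrial \geq \epsilon_\tau\|v_k(1)\|_2^2$, as desired. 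The main obstacle, as indicated, is bookkeeping the $\alpha_k$ dependence through the $\min$ so that the final constant is genuinely uniform in $k$; everything else is a direct application of the preceding lemmas.
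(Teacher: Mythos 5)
Your proposal is correct and follows essentially the same route as the paper's proof: part (i) via Lemma~\ref{lem:opt-equivalence} and feasibility of $u=0$ in subproblem~\eqref{subprob:stationarity} to get $A_k\leq 0$, and part (ii) by bounding the numerator of $\tauktrial$ below via $J_ku_k=0$, the condition $m_k(v_k)\leq m_k(v_k^c)$, and~\eqref{diff-m-no-assumptions}, and the denominator above via Lemma~\ref{lem:denominator}. Your explicit case split on the $\min$ is exactly what the paper's closed-form constant $\epsilon_\tau$ with $\min\{\tfrac{1}{(1+\kappaJ^2)\alpha_0},\kappa_v\}$ encodes, and your side remark that $\|c_k\|_2>0$ when $A_k>0$ is a valid (implicit in the paper) justification for dividing by $\|c_k\|_2$.
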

\begin{proof}
We first prove part~(i).  To this end, first observe that $v_k(1) = 0$ and Lemma~\ref{lem:opt-equivalence}(i) imply that $\delta_k=0$, and therefore $v_k = 0$ holds as a consequence of Lemma~\ref{lem:opt-equivalence}(ii). Next, since $u = 0$ is feasible for subproblem~\eqref{subprob:stationarity} we know that
$$
g_k^Tu_k + \tfrac{1}{2\alpha_k}\|u_k\|_2^2 + \tfrac{1}{\alpha_k}v_k^T u_k + r(x_k + v_k + u_k) \leq r(x_k + v_k),
$$
which may be combined with $v_k = 0$ to obtain
$g_k^Ts_k + \tfrac{1}{2\alpha_k}\|s_k\|_2^2 + r(x_k + s_k) \leq r(x_k)$.
This inequality and the definition of $\tauktrial$ gives $\tauktrial = \infty$, so that $\tau_k \gets \tau_{k-1}$.

Next, we prove part~(ii).  It follows from the merit parameter update rule~\eqref{eq:tau-update}, $J_k u_k = 0$ (see~\eqref{kkt:nullspace}), the third condition in~\eqref{eq:vk-condition}, \eqref{diff-m-no-assumptions}, \eqref{eq:consequence-of-basic-assumption}, Lemma~\ref{lem:denominator}, and monotonicity of the proximal parameter sequence $\{\alpha_k\}$ that if $\tau_k < \tau_{k-1}$, then 
\begin{align*}
\tau_{k-1} 
&> \frac{(1-\sigma_c)(\|c_k\|_2 - \|c_k + J_k v_k\|_2)}{g_k^T s_k + \tfrac{1}{2\alpha_k}\|s_k\|_2^2 + r(x_k+s_k) - r_k} \\
&\geq 
\frac{(1-\sigma_c)(\|c_k\|_2 - \|c_k + J_k v_k^c\|_2)}{g_k^T s_k + \tfrac{1}{2\alpha_k}\|s_k\|_2^2 + r(x_k+s_k) - r_k} \\
&\geq  \frac{(1-\sigma_c)\frac{\kappa_1}{\kappa_c} \|v_k(1)\|_2^2 \min \left\{\frac{1}{1+ \|J_k^T J_k\|_2}, \kappa_v \alpha_k\right\}}{2(\kappag + \kappa_{\partial r}) \kappav \kappaJ \alpha_k \|c_k\|_2 + 2\kappav^2 \kappaJ^2 \kappac \alpha_k \|c_k\|_2} \\
&\geq  \frac{(1-\sigma_c)\kappa_1 \|v_k(1)\|_2^2 \min \left\{\frac{1}{1+ \|J_k^T J_k\|_2}, \kappa_v \alpha_k\right\}}{2(\kappag + \kappa_{\partial r}) \kappav \kappaJ \kappac^2 \alpha_k + 2\kappav^2 \kappaJ^2 \kappac^2 \alpha_k} 
\geq  \epsilon_{\tau} \|v_k(1)\|_2^2,
\end{align*}
where
$
\epsilon_\tau
:= \tfrac{(1-\sigma_c)\kappa_1 \min \left\{\frac{1}{(1+ \kappaJ^2)\alpha_0}, \kappa_v \right\}}{2(\kappag + \kappa_{\partial r}) \kappav \kappaJ \kappac^2 + 2\kappav^2 \kappaJ^2 \kappac^2} > 0
$, thus completing the proof.
\end{proof}

Next, under the assumption that the merit parameter sequence stays bounded away from zero, we give a positive lower bound on $\{\alpha_k\}$.
\begin{lemma}\label{lem:alpha-bounded}
Assume that there exists $\taumin > 0$ such that $\tau_k \geq \taumin$ for all $k\in\N{}$. If $\alpha_k \leq \tfrac{\taumin}{2(\taumin \lipg + \lipJ)}$, then $k\in\Scal$.  Thus, for all $k\in\N{}$,
\begin{equation}\label{eq:lb-alpha-dependence-on-tau}
\alpha_k 
\geq \alphamin 
:= 
\min\{\alpha_0, \tfrac{\xi\taumin}{2(\taumin \lipg + \lipJ)}  \} > 0
\end{equation}
and a bound on the number of unsuccessful iterations is given by
\begin{equation}~\label{bd-unsuccessful}
|\{k \in \N{} : x_k \notin \Scal\}| 
\leq \max\left(0, 
\left\lceil
\frac{\log\Big(\frac{\taumin}{2\alpha_0(\taumin \lipg + \lipJ)}\Big)}{\log(\xi)}
\right\rceil
\right).
\end{equation}
\end{lemma}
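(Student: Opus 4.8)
The plan is to establish the three claims in sequence, with the first (that a sufficiently small proximal parameter forces a successful step) being the crux and the other two following by elementary bookkeeping on the update $\alpha_{k+1} \gets \xi\alpha_k$.

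\textbf{Step 1 (small $\alpha_k$ implies $k\in\Scal$).} I would bound the actual change in the merit function by its first-order model plus a quadratic Lipschitz remainder. The descent lemma for $f$ (from $\lipg$-Lipschitz continuity of $\nabla f$) together with the integral-remainder bound $\|c(x_k+s_k)\|_2 \leq \|c_k+J_ks_k\|_2 + \tfrac{\lipJ}{2}\|s_k\|_2^2$ (from $\lipJ$-Lipschitz continuity of $J$) yields
\begin{equation*}
\Phi_{\tau_k}(x_k+s_k) - \Phi_{\tau_k}(x_k) \leq \tau_k\big(g_k^Ts_k + r(x_k+s_k)-r_k\big) + \|c_k+J_ks_k\|_2 - \|c_k\|_2 + \tfrac{1}{2}(\tau_k\lipg + \lipJ)\|s_k\|_2^2.
\end{equation*}
The first three terms are exactly the upper bound on $D_{\Phi_{\tau_k}}(x_k,s_k)$ derived before the algorithm statement, which the merit-parameter update forces to be at most $-\tfrac{\tau_k}{2\alpha_k}\|s_k\|_2^2 - \sigma_c(\|c_k\|_2 - \|c_k+J_ks_k\|_2)$. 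Substituting this in, the sufficient-decrease test of Line~\ref{line:check-suff-decrease} reduces to verifying
\begin{equation*}
\Big(-\tfrac{\tau_k}{2\alpha_k} + \tfrac{1}{2}(\tau_k\lipg+\lipJ) + \eta_\Phi\tfrac{\tau_k}{4\alpha_k}\Big)\|s_k\|_2^2 \leq (1-\eta_\Phi)\sigma_c\big(\|c_k\|_2 - \|c_k+J_ks_k\|_2\big).
\end{equation*}
The right-hand side is nonnegative, since $J_ku_k=0$ gives $\|c_k+J_ks_k\|_2 = \|c_k+J_kv_k\|_2 \leq \|c_k\|_2$, so it suffices to make the bracketed coefficient nonpositive, which is equivalent to $\alpha_k \leq \frac{\tau_k(1-\eta_\Phi/2)}{\tau_k\lipg + \lipJ}$.

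I would then close the gap using two facts: first, $1-\eta_\Phi/2 > \tfrac12$ because $\eta_\Phi\in(0,1)$; second, the map $t\mapsto t/(t\lipg+\lipJ)$ is increasing on $\R{}_{>0}$ (its derivative is $\lipJ/(t\lipg+\lipJ)^2>0$), so $\tau_k\geq\taumin$ gives $\frac{\tau_k}{2(\tau_k\lipg+\lipJ)} \geq \frac{\taumin}{2(\taumin\lipg+\lipJ)}$. Combining, $\alpha_k \leq \frac{\taumin}{2(\taumin\lipg+\lipJ)} \leq \frac{\tau_k}{2(\tau_k\lipg+\lipJ)} \leq \frac{\tau_k(1-\eta_\Phi/2)}{\tau_k\lipg+\lipJ}$, so the test passes and $k\in\Scal$.

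\textbf{Steps 2 and 3 (lower bound and counting).} For~\eqref{eq:lb-alpha-dependence-on-tau}, the contrapositive of Step 1 shows $\alpha_k$ can only be decreased when $\alpha_k > \frac{\taumin}{2(\taumin\lipg+\lipJ)}$, in which case $\alpha_{k+1}=\xi\alpha_k > \frac{\xi\taumin}{2(\taumin\lipg+\lipJ)}$; a one-line induction on $k$ then gives $\alpha_k \geq \min\{\alpha_0, \frac{\xi\taumin}{2(\taumin\lipg+\lipJ)}\} = \alphamin$. For~\eqref{bd-unsuccessful}, note $\alpha$ is multiplied by $\xi$ exactly on unsuccessful iterations and unchanged otherwise, so after $N$ unsuccessful iterations the value is $\xi^N\alpha_0$; since each unsuccessful iteration requires the current $\alpha$ to exceed $\frac{\taumin}{2(\taumin\lipg+\lipJ)}$, just before the $N$th one we have $\xi^{N-1}\alpha_0 > \frac{\taumin}{2(\taumin\lipg+\lipJ)}$, i.e., $\xi^{N-1} > \frac{\taumin}{2\alpha_0(\taumin\lipg+\lipJ)}$. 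Taking logarithms and dividing by $\log\xi<0$ flips the inequality to $N-1 < \frac{\log(\taumin/(2\alpha_0(\taumin\lipg+\lipJ)))}{\log\xi}$, whence $N \leq \lceil\,\cdot\,\rceil$, and intersecting with $N\geq 0$ produces the stated $\max$.

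\textbf{Main obstacle.} The delicate part is Step 1: correctly assembling the Lipschitz remainder, the directional-derivative inequality, and the merit-parameter update so that the $\eta_\Phi$-discounted acceptance test collapses to a single clean threshold on $\alpha_k$, and then checking that the prescribed bound---stated in terms of $\taumin$ rather than the iteration-dependent $\tau_k$---is strong enough, which is precisely where the monotonicity of $t\mapsto t/(t\lipg+\lipJ)$ and the inequality $1-\eta_\Phi/2>\tfrac12$ are needed. Once Step 1 is secured, Steps 2 and 3 are routine.
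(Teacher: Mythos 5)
Your proposal is correct and follows essentially the same route as the paper's proof: the Lipschitz/descent-lemma bound on $\Phi_{\tau_k}(x_k+s_k)-\Phi_{\tau_k}(x_k)$, the directional-derivative bound guaranteed by the merit-parameter update, nonnegativity of $\|c_k\|_2-\|c_k+J_ks_k\|_2$ via $J_ku_k=0$ and the Cauchy decrease, and the monotonicity of $t\mapsto t/(tL_g+L_J)$ to pass from $\tau_k$ to $\taumin$. The only difference is cosmetic: you collect the coefficient of $\|s_k\|_2^2$ and drive it nonpositive (arriving at the marginally looser threshold $\tau_k(1-\eta_\Phi/2)/(\tau_k L_g+L_J)$), whereas the paper splits $-\tfrac{\tau_k}{2\alpha_k}$ into two quarters and absorbs the remainder into the $(1-\eta_\Phi)$ slack.
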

\begin{proof}
It follows from~\eqref{eq.Lipschitz} and the merit parameter update rule~\eqref{eq:tau-update} that
\begin{equation}\label{eq:merit-function-decrease}
\begin{aligned}
&\Phi_{\tau_k}(x_k+s_k) - \Phi_{\tau_k}(x_k) \\
=\ & \tau_k\big( f(x_k+s_k) + r(x_k+s_k)\big) + \|c(x_k+s_k)\|_2 - \tau_k\big( f_k + r_k\big) - \|c_k\|_2.\\
\leq \ & \tau_k g_k^T s_k + \tau_k\big(r(x_k+s_k) - r_k \big) + \|c_k+J_ks_k\|_2 - \|c_k\|_2 + \thalf(\tau_k \lipg+\lipJ)\|s_k\|_2^2 \\
\leq \ & -\tfrac{\tau_k}{4\alpha_k}\|s_k\|_2^2 - \sigma_c (\|c_k\|_2 - \|c_k + J_k s_k\|_2) + \thalf(-\tfrac{\tau_k}{2\alpha_k} + \tau_k \lipg+\lipJ)\|s_k\|_2^2.
\end{aligned}
\end{equation}
Suppose that $k \in \N{}$ satisfies $\alpha_k \leq \tfrac{\taumin}{2(\taumin \lipg + \lipJ)}$. It follows from the fact that $\tfrac{\tau}{2(\tau L_g + L_J)}$ is a monotonically
increasing function on the nonnegative real line as a function of $\tau$ that $\alpha_k \leq \tfrac{\taumin}{2(\taumin \lipg + \lipJ)} \leq \tfrac{\tau_k}{2(\tau_k \lipg + \lipJ)}$, which after rearrangement shows that $-\tfrac{\tau_k}{2\alpha_k} + \tau_k L_g + L_J \leq 0$. The previous inequality, $\|s_k\|_2 \neq 0$ (since  finite termination does not occur), \eqref{diff-m-no-assumptions}, $\|c_k+J_kv_k\|_2 \leq \|c_k+J_k v_k^c\|_2$, $J_k u_k = 0$, and $\eta_\Phi \in (0,1)$ give
\begin{equation*}
(1-\eta_\Phi)(\tfrac{\tau_k}{4\alpha_k}\|s_k\|_2^2 + \sigma_c (\|c_k\|_2 - \|c_k + J_k s_k\|_2)) > 0 \geq \thalf(-\tfrac{\tau_k}{2\alpha_k} + \tau_k \lipg+\lipJ)\|s_k\|_2^2.
\end{equation*}
Combining this inequality with~\eqref{eq:merit-function-decrease} shows that $k \in \Scal$, as claimed. This result and the update strategy for the proximal parameter $\alpha_k$ ensures that the bound in~\eqref{eq:lb-alpha-dependence-on-tau} holds. Finally, the first result we proved in this lemma and the update strategy for $\{\alpha_k\}$ shows that the maximum number of unsuccessful iterations is the smallest nonnegative integer $n_u$ such that $\xi^{n_u}\alpha_0 \leq \tfrac{\taumin}{2(\taumin \lipg + \lipJ)}$, which gives the final result.
\end{proof}

It will be convenient for our analysis to define the shifted merit function
\begin{equation}~\label{eq:shifted-merit-function}
\Phibar_{\tau}(x) := \tau\big(f(x) - \finf + r(x) \big) + \|c(x)\|_2,
\end{equation}
where $\finf$ is defined in~\eqref{eq:consequence-of-basic-assumption}. We stress that the (typically) unknown value $\finf$ is never used in the algorithm statement or its implementation, only in our analysis.

\begin{lemma}\label{lem:taubar-props}
The following properties hold for the shifted merit function.
\begin{itemize}
\item[(i)] For all $\{x,y\} \subset\R{n}$ and $\tau\in\R{}_{>0}$, it holds that $\Phibar_\tau(x) - \Phibar_\tau(y) = \Phi_\tau(x) - \Phi_\tau(y)$.
\item[(ii)] For all $x\in\R{n}$ and $0 < \tau_2 \leq \tau_1$, it holds that $\Phibar_{\tau_2}(x) \leq \Phibar_{\tau_1}(x)$. 
\item[(iii)] The sequence $\{\Phibar_{\tau_k}(x_k)\}$ is monotonically decreasing.
\end{itemize}
\end{lemma}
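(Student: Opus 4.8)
The plan is to dispatch (i) and (ii) by elementary manipulations and to reserve the real work for (iii), which is a case analysis driven by the algorithm's update rules. For part (i), I would simply observe that $\Phibar_\tau(x) = \Phi_\tau(x) - \tau\finf$, so that the additive constant $\tau\finf$ cancels in the difference $\Phibar_\tau(x) - \Phibar_\tau(y)$, yielding $\Phi_\tau(x) - \Phi_\tau(y)$ immediately. For part (ii), the key point is that the shift by $\finf$ renders the coefficient of $\tau$ nonnegative: from the first line of~\eqref{eq:consequence-of-basic-assumption} we have $f(x) - \finf \geq 0$, and $r(x) \geq 0$ since $r$ is nonnegative-valued, so $f(x) - \finf + r(x) \geq 0$. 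Hence for $0 < \tau_2 \leq \tau_1$ we get $\tau_2(f(x)-\finf+r(x)) \leq \tau_1(f(x)-\finf+r(x))$, and adding the $\tau$-independent term $\|c(x)\|_2$ to both sides gives $\Phibar_{\tau_2}(x) \leq \Phibar_{\tau_1}(x)$.

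For part (iii), I first record that the merit-parameter sequence is monotonically nonincreasing: inspecting~\eqref{eq:tau-update}, either $\tau_k \gets \tau_{k-1}$ or $\tau_k \gets \min\{(1-\epsilon_\tau)\tau_{k-1},\tauktrial\} \leq \tau_{k-1}$, so in all cases $\tau_{k+1} \leq \tau_k$. I then split on whether $k \in \Scal$ (recall~\eqref{def:S}). If $k \notin \Scal$, then $x_{k+1} = x_k$, so part (ii) with $\tau_{k+1} \leq \tau_k$ gives $\Phibar_{\tau_{k+1}}(x_{k+1}) = \Phibar_{\tau_{k+1}}(x_k) \leq \Phibar_{\tau_k}(x_k)$. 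If $k \in \Scal$, then $x_{k+1} = x_k + s_k$, and I would chain the three ingredients: first part (ii) (using $\tau_{k+1} \leq \tau_k$) gives $\Phibar_{\tau_{k+1}}(x_{k+1}) \leq \Phibar_{\tau_k}(x_k + s_k)$; then part (i) converts the barred difference into an unbarred one, $\Phibar_{\tau_k}(x_k+s_k) = \Phibar_{\tau_k}(x_k) + \big(\Phi_{\tau_k}(x_k+s_k) - \Phi_{\tau_k}(x_k)\big)$; and finally the acceptance test in Line~\ref{line:check-suff-decrease} bounds that last difference by $-\eta_\Phi\big(\tfrac{\tau_k}{4\alpha_k}\|s_k\|_2^2 + \sigma_c(\|c_k\|_2 - \|c_k + J_k s_k\|_2)\big)$.

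The one point requiring care — and the closest thing to an obstacle — is verifying that this accepted decrease is genuinely nonpositive, i.e.\ that the parenthesized quantity is nonnegative. The first summand $\tfrac{\tau_k}{4\alpha_k}\|s_k\|_2^2$ is clearly nonnegative, and for the second I would use $J_k u_k = 0$ (from~\eqref{kkt:nullspace}) to write $J_k s_k = J_k v_k$, then combine the third condition of~\eqref{eq:vk-condition} (which gives $\|c_k + J_k v_k\|_2 \leq \|c_k + J_k v_k^c\|_2$) with~\eqref{diff-m-no-assumptions} (which gives $\|c_k + J_k v_k^c\|_2 \leq \|c_k\|_2$) to conclude $\|c_k\|_2 - \|c_k + J_k s_k\|_2 \geq 0$. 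Thus $\Phi_{\tau_k}(x_k+s_k) - \Phi_{\tau_k}(x_k) \leq 0$, so $\Phibar_{\tau_k}(x_k+s_k) \leq \Phibar_{\tau_k}(x_k)$, and the chain closes to $\Phibar_{\tau_{k+1}}(x_{k+1}) \leq \Phibar_{\tau_k}(x_k)$. The remainder is purely bookkeeping; the only genuine subtlety is keeping straight that the acceptance test is phrased with $\tau_k$ while the target inequality involves $\tau_{k+1}$, which is precisely why part (ii) and merit-parameter monotonicity must be invoked before applying part (i).
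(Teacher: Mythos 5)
Your proof is correct. The paper itself gives no in-line argument for this lemma --- it simply cites \cite[Lemma 3.14]{dai2024proximal} --- so there is nothing internal to compare against; your write-up supplies the standard self-contained argument that the citation stands in for. Parts (i) and (ii) are exactly the intended elementary observations (the constant $\tau\finf$ cancels in differences; the shift makes the coefficient of $\tau$ nonnegative via $f(x)\geq\finf$ and $r\geq 0$, which is legitimate on $\Xcal$ where the iterates live). For part (iii) you correctly isolate the one nontrivial point: that the right-hand side of the acceptance test in Line~\ref{line:check-suff-decrease} is nonpositive, which you justify by $J_ks_k=J_kv_k$ (from~\eqref{kkt:nullspace}), $m_k(v_k)\leq m_k(v_k^c)$ (third condition of~\eqref{eq:vk-condition}), and $\|c_k+J_kv_k^c\|_2\leq\|c_k\|_2$ (from~\eqref{diff-m-no-assumptions}); this is the same reasoning the paper uses elsewhere (e.g., in the proofs of Lemmas~\ref{lem:alpha-bounded} and~\ref{lem:tau-vk(1)}), so your argument is fully consistent with the paper's machinery. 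The only cosmetic remark is that when $\delta_k=0$ (so $v_k=0$ and, absent termination, $c_k=0$) the Cauchy-point lemmas are vacuous, but then $\|c_k\|_2-\|c_k+J_ks_k\|_2=0$ trivially, so the case analysis still closes.
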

\begin{proof}
See~\cite[Lemma 3.14]{dai2024proximal} for a proof.
\end{proof}

We can now state our main convergence result for this section.

\begin{theorem}\label{thm:no-licq}
Let Assumption~\ref{ass:basic} hold. One of the following two cases occurs.
\begin{itemize}
\item[(i)] There exists $\taumin > 0$ such that $\tau_k \geq \taumin$ for all $k\in\N{}$.  In this case, the following hold: (a) $\alpha_k \geq \alphamin 
:= 
\min\{\alpha_0, \tfrac{\xi\taumin}{2(\taumin \lipg + \lipJ)}\}$ for all $k\in\N{}$; (b) If $\{k_1, k_2\} \subset \N{}$ are two iterations with $k_1 < k_2$ such that $k \in \Scal$ and $\chibar_k > \epsilon$ for all iterations $k_1 \leq k < k_2$, then it follows that 
\begin{equation}\label{eq:diff-of-its}
k_2 - k_1 \leq \left\lfloor \frac{\tau_0 (f(x_0) + r(x_0) - f_{\inf}) + \|c(x_0)\|_2}{\kappabarPhi \epsilon^2} \right\rfloor 
\end{equation}
with $\kappabar_{\Phi} = \eta_\Phi \min\left\{\frac{\taumin \alphamin}{8}, \frac{\taumin}{8\alpha_0}, \frac{\sigma_c\kappa_1}{\kappa_c}\min \{\frac{1}{1+\kappaJ^2}, \kappa_v \alpha_{\min}\} \right\}$; and (c) for any given $\epsilon > 0$, the maximum number of iterations before $\chibar_k \leq \epsilon$ is
\begin{equation*}
\!\!\!\!\!\!\!\!\!\!\!\!\left(
\!\max\left\{0,
\left\lceil
\frac{\log\Big(\frac{\taumin}{2\alpha_0(\taumin \lipg + \lipJ)}\Big)}{\log(\xi)}
\right\rceil
\right\}
+ 1 \!\right)
\!\!\left\lfloor \frac{\tau_0\big( f(x_0) - \finf + r(x_0) \big) + \|c(x_0)\|_2}{\kappabarPhi\epsilon^2} \right\rfloor\!.
\end{equation*}
\item[(ii)] The merit parameter values converge to zero, i.e., $\lim_{k\to\infty} \tau_k = 0$.  In this case, there exists a subsequence $\Kcal\subseteq\N{}$ such that $\lim_{k\in\Kcal} \|v_k(1)\|_2 = 0$. 
\end{itemize}
\end{theorem}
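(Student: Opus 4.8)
The plan is to handle the dichotomy and the two routine parts quickly, then concentrate on the complexity estimate. First, the update rule~\eqref{eq:tau-update} makes $\{\tau_k\}$ nonincreasing, and since it is bounded below by zero it converges to some $\tau_*\geq0$; I would take Case~(i) to be $\tau_*>0$ (set $\taumin:=\tau_*$) and Case~(ii) to be $\tau_*=0$, so the two cases are exhaustive. Part~(i)(a) is then immediate from Lemma~\ref{lem:alpha-bounded}. For Case~(ii), note that $\tau_k\to0$ forces the merit parameter to be strictly decreased infinitely often, since a finite number of decreases would leave $\{\tau_k\}$ eventually constant and positive; hence $\Kcal:=\{k\in\N{}:\tau_k<\tau_{k-1}\}$ is infinite. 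By Lemma~\ref{lem:tau-vk(1)}(i) each such decrease occurs at an iteration with $\|v_k(1)\|_2\neq0$, and Lemma~\ref{lem:tau-vk(1)}(ii) gives $\|v_k(1)\|_2^2\leq\tau_{k-1}/\epsilon_\tau$ for every $k\in\Kcal$. Letting $k\to\infty$ within $\Kcal$ and using $\tau_{k-1}\to0$ yields $\lim_{k\in\Kcal}\|v_k(1)\|_2=0$, as required.

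The core of the argument is Case~(i)(b). The first step is a uniform per-iteration decrease of the shifted merit function on successful iterations: for $k\in\Scal$, combining the acceptance test in Line~\ref{line:check-suff-decrease} with Lemma~\ref{lem:taubar-props}(i)--(ii) (which convert $\Phi$-decreases to $\Phibar$-decreases and account for $\tau_{k+1}\leq\tau_k$) gives $\Phibar_{\tau_k}(x_k)-\Phibar_{\tau_{k+1}}(x_{k+1})\geq\eta_\Phi\big(\tfrac{\tau_k}{4\alpha_k}\|s_k\|_2^2+\sigma_c(\|c_k\|_2-\|c_k+J_ks_k\|_2)\big)$. The second step shows that if in addition $\chibar_k>\epsilon$, then this right-hand side is at least $\kappabarPhi\epsilon^2$, by splitting into three cases according to which term of $\chibar_k$ exceeds $\epsilon$. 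If the stationarity residual exceeds $\epsilon$, I would use the identity $\|g_k+g_{r,k}+J_k^Ty_k+z_k\|_2=\|s_k\|_2/\alpha_k$ from~\eqref{eq:stationarity-old} to get $\|s_k\|_2>\alpha_k\epsilon$, hence $\tfrac{\tau_k}{4\alpha_k}\|s_k\|_2^2\geq\tfrac{\taumin\alphamin}{4}\epsilon^2$ using (i)(a) and $\tau_k\geq\taumin$. If the third (complementarity) term exceeds $\epsilon$, I would use Lemma~\ref{lem:snorm-bound} to get $\|s_k\|_2>\epsilon$, so $\tfrac{\tau_k}{4\alpha_k}\|s_k\|_2^2\geq\tfrac{\taumin}{4\alpha_0}\epsilon^2$ via $\alpha_k\leq\alpha_0$. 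If $\|v_k(1)\|_2>\epsilon$, I would use $J_ku_k=0$ (so that $\|c_k+J_ks_k\|_2=\|c_k+J_kv_k\|_2\leq\|c_k+J_kv_k^c\|_2$ by the third condition in~\eqref{eq:vk-condition}) together with~\eqref{diff-m-no-assumptions} to bound $\sigma_c(\|c_k\|_2-\|c_k+J_ks_k\|_2)\geq\sigma_c\tfrac{\kappa_1}{\kappac}\min\{\tfrac{1}{1+\kappaJ^2},\kappav\alphamin\}\epsilon^2$. In every case the displayed quantity is at least $\kappabarPhi\epsilon^2$, the conservative factor $\tfrac18$ (rather than $\tfrac14$) in $\kappabarPhi$ absorbing the slack in the first two cases.

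The remaining steps are routine. Telescoping the per-iteration decrease over the successful iterations $k_1\leq k<k_2$ (all of which satisfy $\chibar_k>\epsilon$) gives $(k_2-k_1)\kappabarPhi\epsilon^2\leq\Phibar_{\tau_{k_1}}(x_{k_1})-\Phibar_{\tau_{k_2}}(x_{k_2})\leq\Phibar_{\tau_0}(x_0)$, where I use monotonicity (Lemma~\ref{lem:taubar-props}(iii)) and $\Phibar\geq0$; rearranging and taking the floor yields~\eqref{eq:diff-of-its}. Part~(i)(c) then follows by combining~\eqref{eq:diff-of-its} (with $k_1=0$), which bounds the number of successful iterations with $\chibar_k>\epsilon$, and the bound~\eqref{bd-unsuccessful} on the total number of unsuccessful iterations from Lemma~\ref{lem:alpha-bounded}.

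I expect the main obstacle to be the case analysis in the third paragraph: correctly reducing the complementarity component of $\chibar_k$ to a lower bound on $\|s_k\|_2$ through Lemma~\ref{lem:snorm-bound}, and identifying $\|c_k+J_ks_k\|_2$ with the Cauchy-point quantity via $J_ku_k=0$, so that the three separate lower bounds assemble cleanly into the single constant $\kappabarPhi$. By comparison, the dichotomy, Case~(ii), and the telescoping step are short given the earlier lemmas.
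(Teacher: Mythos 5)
Your proposal is correct and follows essentially the same route as the paper: the same per-iteration decrease bound via Lemma~\ref{lem:taubar-props} and the acceptance test, the same use of Lemma~\ref{lem:snorm-bound}, \eqref{kkt:u}, and \eqref{diff-m-no-assumptions} to relate the decrease to the three components of $\chibar_k$, and the same telescoping plus \eqref{bd-unsuccessful} for the iteration counts. The only cosmetic differences are that the paper keeps all three lower bounds simultaneously (splitting $\tfrac{\tau_k}{4\alpha_k}\|s_k\|_2^2$ into two eighths, which is why $\kappabarPhi$ carries the factor $\tfrac18$) rather than doing a case split on which term of the max exceeds $\epsilon$, and that it proves part~(ii) by contradiction via Lemma~\ref{lem:tau-vk(1)} rather than by your equivalent direct construction of $\Kcal$ as the set of decrease iterations.
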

\begin{proof}
To prove part (i), let us assume there exists $\taumin > 0$ such that $\tau_k \geq \taumin$ for all $k \in \N{}$. Using this fact, Lemma~\ref{lem:alpha-bounded} ensures that both~\eqref{eq:lb-alpha-dependence-on-tau} and~\eqref{bd-unsuccessful} hold. Since~\eqref{eq:lb-alpha-dependence-on-tau} holds, part~(i)(a) is proved. To prove part~(i)(b), let $\{k_1,k_2\}$ be as in the statement of the theorem. Then, for all $k \in \Scal$ and $k_1 \leq k < k_2$, it follows from Lemma~\ref{lem:taubar-props}(i)--(ii), $k\in\Scal$, \eqref{eq:consequence-of-basic-assumption}, $J_k u_k = 0$, \eqref{diff-m-no-assumptions}, and Lemma~\ref{lem:alpha-bounded} that
\begin{equation}~\label{eq:merit-function-reduction}
\begin{aligned}
& \Phibar_{\tau_k}(x_k) - \Phibar_{\tau_{k+1}}(x_{k+1}) 
\geq  \Phibar_{\tau_k}(x_k) - \Phibar_{\tau_k}(x_{k+1}) 
=  \Phi_{\tau_k}(x_k) - \Phi_{\tau_k}(x_{k+1}) \\
\geq \ & \eta_\Phi \left( \tfrac{\tau_k}{4\alpha_k} \|s_k\|_2^2 + \sigma_c ( \|c_k\|_2 - \|c_k + J_k s_k\|_2) \right) \\
\geq \ & \eta_\Phi \left[ \tfrac{\tau_k \alpha_k}{4}\left(\tfrac{\|s_k\|_2}{\alpha_k}\right)^2 + 
\tfrac{\sigma_c\kappa_1}{\kappa_c}\|v_k(1)\|_2^2 \min \left\{\tfrac{1}{1+ \|J_k^T J_k\|_2}, \kappa_v \alpha_k \right\}
\right] \\
= \ & \eta_\Phi \left[ \tfrac{\tau_k \alpha_k}{8}\left(\tfrac{\|s_k\|_2}{\alpha_k}\right)^2 + \tfrac{\tau_k \|s_k\|_2^2}{8\alpha_k} + 
\tfrac{\sigma_c\kappa_1}{\kappa_c}\|v_k(1)\|_2^2 \min \left\{\tfrac{1}{1+ \|J_k^T J_k\|_2}, \kappa_v \alpha_k \right\}
\right].
\end{aligned}
\end{equation}
Lemma~\ref{lem:snorm-bound},  \eqref{eq:merit-function-reduction}, 
\eqref{kkt:u}, \eqref{eq:lb-alpha-dependence-on-tau}, and $\tau_k \geq\taumin$ and $\alpha_k \leq \alpha_0$ for all $k\in\N{}$ give
\begin{align*}
& \Phibar_{\tau_k}(x_k) - \Phibar_{\tau_{k+1}}(x_{k+1}) \\
\geq \ & \eta_\Phi \left[ \tfrac{\tau_k \alpha_k}{8}\|g_k + g_{r,k} + J_k^T y_k + z_k\|_2^2 + \tfrac{\tau_k}{8\alpha_k}\|\min\{x_k,-z_k\}\|_2^2 \right. \\
& \left. + 
\tfrac{\sigma_c\kappa_1}{\kappa_c}\|v_k(1)\|_2^2 \min \left\{\tfrac{1}{1+ \|J_k^T J_k\|_2}, \kappa_v \alpha_k \right\}
\right] \\
\geq \ & \eta_\Phi \left[ \tfrac{\tau_{\min} \alpha_{\min}}{8}\|g_k + g_{r,k} + J_k^T y_k + z_k\|_2^2 + \tfrac{\tau_{\min}}{8\alpha_0}\|\min\{x_k,-z_k\} \|_2^2 \right. \\
& \left. +
\tfrac{\sigma_c\kappa_1}{\kappa_c}\|v_k(1)\|_2^2 \min \left\{\tfrac{1}{1+ \kappaJ^2}, \kappa_v \alphamin \right\}
\right] \\
\geq \ & \kappabarPhi \chibar_k^2
\end{align*}
where $\kappabarPhi$ is defined in the statement of the current theorem. Using this inequality, Lemma~\ref{lem:taubar-props}(iii), and nonnegativity of $\Phibar_{\tau}$ for all $\tau\in\R{}_{>0}$, we find that 
\begin{align*}
    \Phibar_{\tau_0}(x_0)
    &\geq \Phibar_{\tau_{k_1}}(x_{k_1}) 
    \geq \Phibar_{\tau_{k_1}}(x_{k_1}) - \Phibar_{\tau_{k_2}}(x_{k_2})  \\
    &= \sum_{k = k_1}^{k_2-1} \big(\Phibar_{\tau_k}(x_k) - \Phibar_{\tau_{k+1}}(x_{k+1}) \big) 
    \geq \sum_{k = k_1}^{k_2-1}\kappabarPhi \chibar_k^2, 
\end{align*}
which may be combined with $\chibar_k > \epsilon$ for all $k_1 \leq k \leq k_2$ to conclude that
$\Phibar_{\tau_0}(x_0)
\geq (k_2-k_1)\kappabarPhi\epsilon^2$, from which~\eqref{eq:diff-of-its} follows. The result (i)(c), namely the claimed upper bound on the maximum iterations before $\chibar_k \leq \epsilon$, follows from what we just proved and the fact that maximum number of unsuccessful iterations is bounded as in~\eqref{bd-unsuccessful}.


We prove part (ii) by contradiction. Thus, suppose that there exists $\epsilon \in\R{}_{>0}$ and $\kbar_1\in\N{}$ such that $\|v_k(1)\|_2 \geq \epsilon$ for all $k \geq \kbar_1$. It then follows from Lemma~\ref{lem:tau-vk(1)} that there exists $\taumin\in\R{}_{>0}$ such that $\tau_k \geq \taumin$ for all $k\in\N{}$, which is a contradiction.
\end{proof}

\subsubsection{Analysis under a sequential constraint qualification}\label{sec:sequential-licq}

In this section, we assume that a sequential constraint qualification holds (all results from Section~\ref{sec:no-cq} still hold). To state this assumption, we define the index set of active variables after taking the Cauchy step $v_k^c$ as 
$$
\Acal_k^v := \Acal(x_k+v_k^c) \equiv \{i \in [n]: [x_k + v_k^c]_i = 0 \}.
$$
We can now formally state the assumption we make throughout this section.
\begin{assumption}\label{ass:strong-licq}
The matrix $[J_k^T, I_{\Acal_k^v}^T]^T$ has full row rank and its smallest singular value is uniformly bounded away from zero for all $k\in\N{}$, where $I_{\Acal_k^v}$ denotes the subset of rows of the identity matrix that correspond to the elements in $\Acal_k^v$, i.e., there exists $\sigmamin \in \R{}_{>0}$ such that $\sigmamin([J_k^T,I_{A_k^v}^T]^T) \geq \sigmamin$ for all $k \in \N{}$ with $\sigmamin(A)$ denoting the smallest singular value of a matrix $A$.
\end{assumption}


Under the above assumption, our aim is to prove a worst-case iteration complexity result for Algorithm~\ref{alg:main}.  Our result uses the KKT-residual measure
\begin{equation}\label{eq:chik}
\chi_k := \max\left\{\|g_k + g_{r,k} + J_k^T y_k + z_k\|_2,\|c_k\|_2, \|\min\{x_k,-z_k\}\|_2 \right\}.
\end{equation}
Note that~\eqref{eq:chik}  differs from the definition of $\chibar_k$ in~\eqref{def:chibark} by using the measure $\|c_k\|_2$ instead of $\|v_k(1)\|_2$, which is reasonable because of the constraint qualification.

We begin by establishing a key connection between $\|v_k(\beta_k)\|_2$ and $\|c_k\|_2$. 
\begin{lemma}\label{lem:vk-ck-relation}
For all $k \in \N{}$, it holds that
$\|v_k(\beta_k)\|_2/\beta_k \geq \sigmamin \|c_k\|_2$. 
\end{lemma}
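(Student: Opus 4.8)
The plan is to combine the explicit componentwise form of the Euclidean projection onto the nonnegative orthant $\Omega$ with the full-rank structure granted by Assumption~\ref{ass:strong-licq}. Set $P := v_k(\beta_k)/\beta_k$ and recall from~\eqref{def:vkbeta} that $x_k + v_k(\beta_k) = \Proj_{\Omega}(x_k - \beta_k J_k^Tc_k)$, where the projection acts as $[\Proj_{\Omega}(w)]_i = \max\{0,w_i\}$. First I would split the components according to the active set $\Acal_k^v = \Acal(x_k+v_k^c)$ (recall $v_k^c = v_k(\beta_k)$). For $i \notin \Acal_k^v$ the projected component $[x_k+v_k^c]_i$ is strictly positive, so the max selects its argument, giving $[v_k(\beta_k)]_i = -\beta_k[J_k^Tc_k]_i$ and hence $P_i = -[J_k^Tc_k]_i$. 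For $i \in \Acal_k^v$ the projected component is zero, so $[v_k(\beta_k)]_i = -[x_k]_i$ and $P_i = -[x_k]_i/\beta_k$.

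The key structural observation is that $P$ agrees with $-J_k^Tc_k$ on every inactive index, so the correction $P + J_k^Tc_k$ is supported entirely on $\Acal_k^v$. Thus there exists $\tilde e \in \R{|\Acal_k^v|}$ with $P = -J_k^Tc_k + I_{\Acal_k^v}^T\tilde e$. Introducing the matrix $M_k := [J_k^T, I_{\Acal_k^v}^T]^T$ from Assumption~\ref{ass:strong-licq} and the stacked vector $w_k \in \R{m+|\Acal_k^v|}$ obtained by placing $-c_k$ above $\tilde e$, this reads
\[
P = M_k^T w_k,
\]
where the crucial point is that the block of $w_k$ contracted against $J_k^T$ is \emph{exactly} $-c_k$.

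To conclude, I would invoke Assumption~\ref{ass:strong-licq}: because $M_k$ has full row rank with smallest singular value at least $\sigmamin$, its transpose has full column rank and obeys $\|M_k^T w\|_2 \geq \sigmamin\|w\|_2$ for all $w$. Applying this with $w = w_k$ and then discarding the $\tilde e$ block gives $\|P\|_2 = \|M_k^T w_k\|_2 \geq \sigmamin\|w_k\|_2 \geq \sigmamin\|c_k\|_2$. Since $P = v_k(\beta_k)/\beta_k$ with $\beta_k > 0$, this is exactly the desired inequality $\|v_k(\beta_k)\|_2/\beta_k \geq \sigmamin\|c_k\|_2$.

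The componentwise bookkeeping is routine once the right decomposition is in hand; the step I would take the most care over is verifying that $P + J_k^Tc_k$ vanishes on the inactive set, so that it genuinely lies in the range of $I_{\Acal_k^v}^T$ and the $J_k^T$-coefficient in $w_k$ is \emph{forced} to equal $-c_k$ rather than some perturbed vector. It is precisely this exactness that lets the singular-value lower bound for $M_k^T$ retain the full factor $\|c_k\|_2$. Notably, no control over the magnitude of $\beta_k$ or the cardinality of $\Acal_k^v$ is required, which is why the bound holds uniformly for all $k \in \N{}$.
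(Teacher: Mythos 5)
Your proposal is correct and follows essentially the same route as the paper's proof: both decompose $v_k(\beta_k)/\beta_k$ as $-J_k^Tc_k$ plus a correction supported on $\Acal_k^v$ (your $\tilde e$ is the paper's $[w_k]_{\Acal_k^v}$ up to sign), rewrite this as the transpose of $[J_k^T, I_{\Acal_k^v}^T]^T$ acting on the stacked vector containing $c_k$, and invoke the uniform smallest-singular-value bound from Assumption~\ref{ass:strong-licq}. The componentwise verification that the correction vanishes on inactive indices is exactly the case analysis the paper carries out.
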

\begin{proof}
Let us define the vector $w_k\in\R{n}$ componentwise as
\begin{equation}~\label{eq:multiplier}
[w_k]_i = 
\begin{cases}
0 & i \in [n]\setminus \Acal_k^v, \\
-[J_k^Tc_k]_i - [v_k(\beta_k)]_i/\beta_k & i \in \Acal_k^v. 
\end{cases}
\end{equation}
We claim that the following holds:
\begin{equation}\label{proj-claim}
\Proj_{ \Omega} (x_k - \beta_k J_k^Tc_k) - x_k = - \beta_k J_k^Tc_k - \beta_k w_k,
\end{equation}
which we verify by considering its coordinates. If $i \in \Acal_k^v$, then~\eqref{eq:beta-condition} and~\eqref{eq:multiplier} give
\begin{equation}~\label{eq:case1}
\begin{aligned}
&[\Proj_{ \Omega} (x_k - \beta_k J_k^Tc_k) - x_k]_i = [v_k(\beta_k)]_i \\
= \  &[-\beta_k J_k^Tc_k]_i - [-\beta_k J_k^Tc_k - v_k(\beta_k)]_i = [-\beta_k J_k^Tc_k]_i - [\beta_k w_k]_i,
\end{aligned}
\end{equation}
so that~\eqref{proj-claim} holds in this case. On the other hand, if $i \in [n]\setminus \Acal_k^v$, then $[\Proj_{ \Omega} (x_k - \beta_k J_k^Tc_k)]_i = [x_k + v_k(\beta_k)]_i  = [x_k + v_k^c]_i > 0$ and $[w_k]_i = 0$. It follows that 
\begin{equation}
0 < [\Proj_{ \Omega} (x_k - \beta_k J_k^Tc_k)]_i = \max\left\{[x_k - \beta_k J_k^Tc_k]_i, 0\right\},
\end{equation}
which implies that $[x_k - \beta_k J_k^Tc_k]_i > 0$.  Combining this with $[w_k]_i = 0$ shows that
\begin{equation}\label{eq:case2}
\begin{aligned}
\ [\Proj_{ \Omega} (x_k - \beta_k J_k^Tc_k) - x_k]_i &= [(x_k - \beta_k J_k^Tc_k) - x_k]_i \\
&= [- \beta_k J_k^Tc_k]_i 
= [- \beta_k J_k^Tc_k - \beta_k w_k]_i
\end{aligned}
\end{equation}
so that~\eqref{proj-claim} again holds for this case. This establishes that~\eqref{proj-claim} holds, as claimed.  It follows from the definition of $v_k(\beta_k)$, \eqref{proj-claim}, and Assumption~\ref{ass:strong-licq} that 
\begin{equation*}
\begin{aligned}
\left\|\frac{v_k(\beta_k)}{\beta_k}\right\|_2 
&= \left\|\frac{\Proj_{ \Omega} (x_k - \beta_k J_k^Tc_k) - x_k}{\beta_k}\right\|_2 
= \left\| \frac{- \beta_k J_k^Tc_k - \beta_k w_k}{\beta_k} \right\|_2 \\
&= \|J_k^Tc_k + w_k\|_2 
= \left\|\begin{bmatrix}
    J_k^T, I_{\Acal_k^v}^T 
\end{bmatrix} 
\begin{bmatrix}
    c_k \\
    [w_k]_{\Acal_k^v}
\end{bmatrix}\right\|_2 \\
&\geq \sigmamin([J_k^T,I_{A_k^v}^T]^T) \left\|\begin{bmatrix}
    c_k \\
    [w_k]_{\Acal_k^v}
\end{bmatrix}\right\|_2 \geq \sigmamin \|c_k\|_2 \ \text{for all $k \in \N{}$,}
\end{aligned} 
\end{equation*}
which completes the proof.
\end{proof}

We now give a bound on the improvement in linearized infeasibility at $x_k$.
\begin{lemma}\label{lem:numerator}
For all $k \in \N{}$, it holds that
\begin{equation*}
\|c_k\|_2 - \|c_k + J_k s_k\|_2
=
\|c_k\|_2 - \|c_k + J_k v_k\|_2 \geq \kappa_1\sigmamin^2 \|c_k\|_2 \min \left\{\tfrac{1}{1+ \|J_k^T J_k\|_2}, \kappa_v \alpha_k\right\}.
\end{equation*}
and
$$
\|c_k\|_2 - \|c_k + J_k s_k\|_2
= \|c_k\|_2 - \|c_k + J_k v_k\|_2
\geq \frac{\kappa_1}{\kappa_c} \sigmamin^2 \|c_k\|_2^2 \min\left\{\tfrac{1}{1+\|J_k^T J_k\|_2}, \kappav\alpha_k\right\}.
$$
\end{lemma}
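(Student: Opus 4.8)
The plan is to prove Lemma~\ref{lem:numerator} by stitching together the Cauchy-decrease machinery from Lemma~\ref{cor:cauchy-decrease} with the new constraint-qualification bound from Lemma~\ref{lem:vk-ck-relation}. Both claimed inequalities have the same left-hand side, so the first order of business is to justify the equalities $\|c_k\|_2 - \|c_k + J_k s_k\|_2 = \|c_k\|_2 - \|c_k + J_k v_k\|_2$. This follows immediately from $s_k = v_k + u_k$ together with $J_k u_k = 0$ (equation~\eqref{kkt:nullspace}), since then $J_k s_k = J_k v_k$. That reduces everything to bounding $\|c_k\|_2 - \|c_k + J_k v_k\|_2$ from below.

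**Next,** I would recall that the accepted feasibility step $v_k$ satisfies $m_k(v_k) \leq m_k(v_k^c)$ by~\eqref{eq:vk-condition}, which means $\|c_k + J_k v_k\|_2 \leq \|c_k + J_k v_k^c\|_2$, and hence $\|c_k\|_2 - \|c_k + J_k v_k\|_2 \geq \|c_k\|_2 - \|c_k + J_k v_k^c\|_2$. So it suffices to lower-bound the Cauchy-point quantity. For this I would mimic the derivation in Lemma~\ref{cor:cauchy-decrease}: starting from the Cauchy decrease in $m_k$ (Lemma~\ref{lem:cauchy-decrease-projection}), use the factorization $\|c_k\|_2^2 - \|c_k + J_k v_k^c\|_2^2 = 2(m_k(0) - m_k(v_k^c))$ and the bound $\|c_k\|_2^2 - \|c_k + J_k v_k^c\|_2^2 \leq 2\|c_k\|_2(\|c_k\|_2 - \|c_k + J_k v_k^c\|_2)$ from~\eqref{eq:feasibility-reduction-inequality-new}.

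**The key new ingredient** — and where this lemma departs from Lemma~\ref{cor:cauchy-decrease} — is replacing the factor $\|v_k(1)\|_2^2$ with something proportional to $\|c_k\|_2^2$. In Lemma~\ref{cor:cauchy-decrease} the Cauchy decrease was expressed via $[\|v_k(\beta_k)\|_2/\beta_k]^2$ and then lower-bounded by $\|v_k(1)\|_2^2$ using monotonicity~\eqref{eq:nonincraesing}. Here instead I would keep the $[\|v_k(\beta_k)\|_2/\beta_k]^2$ form from~\eqref{eq:cauchy-decrease-projection-corollary-1} and apply Lemma~\ref{lem:vk-ck-relation}, which gives $\|v_k(\beta_k)\|_2/\beta_k \geq \sigmamin\|c_k\|_2$, hence $[\|v_k(\beta_k)\|_2/\beta_k]^2 \geq \sigmamin^2 \|c_k\|_2^2$. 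Substituting this into the Cauchy decrease yields
\begin{equation*}
m_k(0) - m_k(v_k^c) \geq \kappa_1 \sigmamin^2 \|c_k\|_2^2 \min\left\{\tfrac{1}{1+\|J_k^T J_k\|_2}, \kappa_v \alpha_k\right\}.
\end{equation*}
Combining with the factorization above and dividing through by $\|c_k\|_2$ (handling the trivial $\|c_k\|_2 = 0$ case separately, exactly as in Lemma~\ref{cor:cauchy-decrease}) gives the first displayed inequality with its single factor of $\|c_k\|_2$. The second inequality then follows by applying $\|c_k\|_2 \leq \kappa_c$ from~\eqref{eq:consequence-of-basic-assumption} — that is, writing $\|c_k\|_2 = \|c_k\|_2^2/\|c_k\|_2 \geq \|c_k\|_2^2/\kappa_c$ — which trades one power of $\|c_k\|_2$ for the $\tfrac{1}{\kappa_c}$ factor and produces the $\|c_k\|_2^2$ bound.

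**The main obstacle** is essentially bookkeeping rather than conceptual: making sure the $\|c_k\|_2 = 0$ degenerate case is dispatched cleanly (both inequalities hold trivially there since the left side is nonnegative and the right side is zero), and confirming that the single-versus-squared power of $\|c_k\|_2$ in the two claimed bounds is correctly produced by the one-sided use of $\|c_k\|_2 \leq \kappa_c$. I expect no genuine difficulty beyond tracking which of the two minimands and which constant factors survive at each substitution.
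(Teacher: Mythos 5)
Your proposal is correct and follows essentially the same route as the paper's proof: difference-of-squares on the linearized residual, the Cauchy decrease bound from Lemma~\ref{cor:cauchy-decrease} in its $[\|v_k(\beta_k)\|_2/\beta_k]^2$ form combined with Lemma~\ref{lem:vk-ck-relation}, division by $2\|c_k\|_2$, and $\|c_k\|_2\leq\kappa_c$ for the second inequality. The only cosmetic difference is that you pass to the Cauchy point before the difference-of-squares step while the paper works with $v_k$ directly and invokes $m_k(v_k)\leq m_k(v_k^c)$ afterward; both are equivalent.
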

\begin{proof}
It follows from~\eqref{eq:vk-condition} and Lemma~\ref{lem:cauchy-decrease-projection} that  $\|c_k+J_kv_k\|_2 \leq \|c_k\|_2$.  It follows from this inequality and a  difference-of-squares computation that
\begin{equation}~\label{eq:feasibility-reduction-inequality}
\begin{aligned}
\|c_k\|_2^2 - \|c_k + J_k v_k\|_2^2 &= (\|c_k\|_2 + \|c_k + J_k v_k\|_2)(\|c_k\|_2 - \|c_k + J_k v_k\|_2) \\
&\leq 2\|c_k\|_2 (\|c_k\|_2 - \|c_k + J_k v_k\|_2).
\end{aligned}
\end{equation}
Combining~\eqref{eq:feasibility-reduction-inequality}, the third condition in~\eqref{eq:vk-condition}, Lemma~\ref{cor:cauchy-decrease},  and Lemma~\ref{lem:vk-ck-relation} we have 
\begin{align*}
& 2\|c_k\|_2(\|c_k\|_2 - \|c_k + J_k v_k\|_2) \geq \|c_k\|_2^2 - \|c_k + J_k v_k\|_2^2 
= 2(m_k(0) - m_k(v_k)) \\ 
&\geq 2(m_k(0) - m_k(v_k^c)) 
\geq 2\kappa_1 \left[\tfrac{\|v_k(\beta_k)\|_2}{\beta_k}\right]^2 \min \left\{\tfrac{1}{1+ \|J_k^T J_k\|_2}, \kappa_v \alpha_k\right\} \\ 
&\geq  2\kappa_1 \sigmamin^2 \|c_k\|_2^2 \min \left\{\tfrac{1}{1+ \|J_k^T J_k\|_2}, \kappa_v \alpha_k\right\}.
\end{align*}
The proof of the first inequality follows by  dividing through the previous inequality by $2\|c_k\|_2$ and using the fact that $J_k u_k = 0$ (see~\eqref{kkt:nullspace}). The second inequality follows from the first inequality and the fact that $\|c_k\|_2/\kappac \leq 1$ because of~\eqref{eq:consequence-of-basic-assumption}.
\end{proof}

We now establish that the merit parameter sequence is bounded away from zero.
\begin{lemma}\label{lem:tau-bounded}
For all $k \in \N{}$, it holds that
\begin{align}
\tauktrial &\geq \tau_{\min,trial} := \frac{(1-\sigma_c)\kappa_1\sigmamin^2 \min \left\{\frac{1}{(1+ \kappaJ^2)\alpha_0} , \kappa_v\right\}}{2(\kappag + \kappa_{\partial r}) \kappav \kappaJ + 2\kappav^2 \kappaJ^2 \kappac} > 0 \ \ \text{and} \label{bound-tautrial} \\
\tau_k &\geq \tau_{\min}:=\min\{\tau_0, (1-\epsilon_{\tau})\tau_{\min,trial})\} > 0.\label{bound-tau}
\end{align}
\end{lemma}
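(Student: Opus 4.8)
The plan is to first establish the uniform lower bound~\eqref{bound-tautrial} on $\tauktrial$ directly from its defining formula, and then obtain~\eqref{bound-tau} by a short induction on the merit-parameter update rule~\eqref{eq:tau-update}. The two supporting ingredients are Lemma~\ref{lem:numerator} (a lower bound on the linearized-infeasibility reduction, hence on the \emph{numerator} of $\tauktrial$) and Lemma~\ref{lem:denominator} (an upper bound on $A_k := g_k^T s_k + \tfrac{1}{2\alpha_k}\|s_k\|_2^2 + r(x_k+s_k) - r_k$, the \emph{denominator}).

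For~\eqref{bound-tautrial} I would split on the sign of $A_k$. If $A_k \leq 0$, then $\tauktrial = \infty$ by definition and the bound holds trivially. If $A_k > 0$, I first note that necessarily $\|c_k\|_2 > 0$: were $\|c_k\|_2 = 0$, then $\delta_k = 0$ would force $v_k = 0$ by Lemma~\ref{lem:opt-equivalence}, and feasibility of $u=0$ in~\eqref{subprob:stationarity} would then give $A_k \leq 0$ (exactly the computation in the proof of Lemma~\ref{lem:tau-vk(1)}(i)), a contradiction. With $\|c_k\|_2 > 0$ secured, I bound the numerator of $\tauktrial$ below using Lemma~\ref{lem:numerator} and the denominator $A_k$ above using Lemma~\ref{lem:denominator}; the common factor $\|c_k\|_2$ cancels, and after pulling $\alpha_k$ out of the denominator bound one is left with
\[
\tauktrial \geq \frac{(1-\sigma_c)\kappa_1\sigmamin^2}{2(\kappag+\kappagr)\kappav\kappaJ + 2\kappav^2\kappaJ^2\kappac}\cdot \frac{\min\{\tfrac{1}{1+\|J_k^T J_k\|_2},\kappav\alpha_k\}}{\alpha_k}.
\]
The key simplification is the identity $\tfrac{1}{\alpha_k}\min\{\tfrac{1}{1+\|J_k^T J_k\|_2},\kappav\alpha_k\} = \min\{\tfrac{1}{\alpha_k(1+\|J_k^T J_k\|_2)},\kappav\}$, which by $\alpha_k \leq \alpha_0$ and $\|J_k^T J_k\|_2 \leq \kappaJ^2$ is bounded below by $\min\{\tfrac{1}{(1+\kappaJ^2)\alpha_0},\kappav\}$; substituting this reproduces exactly $\tautrialmin$.

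With~\eqref{bound-tautrial} in hand, I would prove~\eqref{bound-tau} by induction on $k$. The base case $\tau_0 \geq \taumin$ is immediate since $\taumin = \min\{\tau_0,(1-\epsilon_\tau)\tautrialmin\} \leq \tau_0$. For the inductive step, assume $\tau_{k-1} \geq \taumin$ and examine the two branches of~\eqref{eq:tau-update}. If $\tau_{k-1} \leq \tauktrial$, then $\tau_k = \tau_{k-1} \geq \taumin$. Otherwise $\tau_k = \min\{(1-\epsilon_\tau)\tau_{k-1},\tauktrial\}$, and here $\tauktrial \geq \tautrialmin \geq \taumin$ by~\eqref{bound-tautrial}, while $\tau_{k-1} > \tauktrial \geq \tautrialmin$ gives $(1-\epsilon_\tau)\tau_{k-1} \geq (1-\epsilon_\tau)\tautrialmin \geq \taumin$; since both arguments of the minimum dominate $\taumin$, we conclude $\tau_k \geq \taumin$.

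The computations are routine once Lemmas~\ref{lem:numerator} and~\ref{lem:denominator} are available, so I expect no serious obstacle. The one step that genuinely requires care is the $A_k>0$ case of~\eqref{bound-tautrial}: cancelling $\|c_k\|_2$ from numerator and denominator is legitimate only because $\|c_k\|_2>0$ there, and omitting this observation would let the denominator bound $2(\cdots)\alpha_k\|c_k\|_2$ collapse to zero, invalidating the argument.
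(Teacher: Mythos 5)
Your proposal is correct and follows essentially the same route as the paper: case-split on the sign of $A_k$, bound the numerator via Lemma~\ref{lem:numerator} and the denominator via Lemma~\ref{lem:denominator}, cancel $\|c_k\|_2$, use $\alpha_k \leq \alpha_0$ and $\|J_k^TJ_k\|_2 \leq \kappaJ^2$ to reach $\tautrialmin$, and then invoke the update rule~\eqref{eq:tau-update} for~\eqref{bound-tau}. Your extra observation that $A_k > 0$ forces $\|c_k\|_2 > 0$ (justifying the cancellation) is a point the paper leaves implicit, and your explicit induction for~\eqref{bound-tau} merely spells out what the paper asserts in one line.
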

\begin{proof}
We first prove~\eqref{bound-tautrial}.  If $A_k \leq 0$ in the definition of $\tauktrial$, then $\tauktrial = \infty$ so that~\eqref{bound-tautrial} trivially holds.  If $A_k > 0$, then it follows from the definition of $\tauktrial$, $s_k = v_k + u_k$, $J_k u_k = 0$ (see~\eqref{kkt:nullspace}), Lemma~\ref{lem:numerator}, Lemma~\ref{lem:denominator}, the fact that $\alpha_k \leq \alpha_0$ for all $k$ by construction of Algorithm~\ref{alg:main}, and~\eqref{eq:consequence-of-basic-assumption} that
\begin{align*}
\tau_{k,\text{trial}} 
&= \frac{(1-\sigma_c)(\|c_k\|_2 - \|c_k + J_k v_k\|_2)}{g_k^T s_k + \tfrac{1}{2\alpha_k}\|s_k\|_2^2 + r(x_k+s_k) - r_k} \\
&\geq \frac{(1-\sigma_c)\kappa_1\sigmamin^2 \|c_k\|_2 \min \left\{\frac{1}{1+ \|J_k^T J_k\|_2}, \kappa_v \alpha_k\right\}}{2(\kappag + \kappa_{\partial r}) \kappav \kappaJ \alpha_k \|c_k\|_2 + 2\kappav^2 \kappaJ^2 \kappac \alpha_k \|c_k\|_2} \\
&= \frac{(1-\sigma_c)\kappa_1\sigmamin^2 \min \left\{\frac{1}{1+ \|J_k^T J_k\|_2}, \kappa_v \alpha_k\right\}}{2(\kappag + \kappa_{\partial r}) \kappav \kappaJ \alpha_k + 2\kappav^2 \kappaJ^2 \kappac \alpha_k} \\
&\geq \frac{(1-\sigma_c)\kappa_1\sigmamin^2 \min \left\{\frac{1}{(1+ \kappaJ^2)\alpha_0} , \kappa_v\right\}}{2(\kappag + \kappa_{\partial r}) \kappav \kappaJ + 2\kappav^2 \kappaJ^2 \kappac},
\end{align*}
which proves~\eqref{bound-tautrial}. The merit parameter update rule~\eqref{eq:tau-update} and~\eqref{bound-tautrial} give~\eqref{bound-tau}.
\end{proof}

We may now state our worst-case complexity result for Algorithm~\ref{alg:main}.
\begin{theorem}\label{thm:complexity}
Suppose that Assumption~\ref{ass:basic} and Assumption~\ref{ass:strong-licq} hold. Let $\epsilon \in \R{}_{>0}$ be given. If $\{k_1, k_2\} \subset \N{}$ are two iterations with $k_1 < k_2$ such that $k \in \Scal$ and $\chi_k > \epsilon$ for all iterations $k_1 \leq k < k_2$, then it follows that 
\begin{equation}\label{eq:diff-of-its-old}
k_2 - k_1 \leq \left\lfloor \frac{\tau_0 (f(x_0) + r(x_0) - f_{\inf}) + \|c(x_0)\|_2}{\kappaPhi \epsilon^2} \right\rfloor 
\end{equation}
with $\kappa_{\Phi} = \eta_\Phi \min\left\{\frac{\taumin \alphamin}{8}, \frac{\taumin}{8\alpha_0}, \frac{\sigma_c\kappa_1}{\kappa_c}\sigmamin^2 \min \{\frac{1}{1+\kappaJ^2}, \kappa_v \alpha_{\min}\}  \right\}$. Moreover, the maximum number of iterations before $\chi_k \leq \epsilon$ for some iteration $k\in\N{}$ is
\begin{equation*}
\left(
\max\left\{0,
\left\lceil
\frac{\log\Big(\frac{\taumin}{2\alpha_0(\taumin \lipg + \lipJ)}\Big)}{\log(\xi)}
\right\rceil
\right\}
 + 1 \right)
\left\lfloor \frac{\tau_0\big( f(x_0) - \finf + r(x_0) \big) + \|c(x_0)\|_2 }{\kappaPhi\epsilon^2} \right\rfloor.
\end{equation*}
\end{theorem}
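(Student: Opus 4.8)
The plan is to reproduce the argument behind Theorem~\ref{thm:no-licq}(i), exploiting that Assumption~\ref{ass:strong-licq} now forces the merit parameter to remain bounded away from zero \emph{unconditionally}. Concretely, I would first invoke Lemma~\ref{lem:tau-bounded} to obtain $\tau_k \geq \taumin > 0$ for all $k \in \N{}$, which removes the dichotomy of Theorem~\ref{thm:no-licq}: here we are always in its Case~(i). With this uniform lower bound in hand, Lemma~\ref{lem:alpha-bounded} immediately supplies both $\alpha_k \geq \alphamin$ for all $k$ and the bound~\eqref{bd-unsuccessful} on the total number of unsuccessful iterations, so the only genuinely new work is to recast the per-iteration merit decrease in terms of the KKT measure $\chi_k$ from~\eqref{eq:chik} rather than $\chibar_k$.

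The heart of the proof is to show that every successful iteration $k$ with $k_1 \leq k < k_2$ satisfies $\Phibar_{\tau_k}(x_k) - \Phibar_{\tau_{k+1}}(x_{k+1}) \geq \kappaPhi\,\chi_k^2$. Starting from the acceptance test in Line~\ref{line:check-suff-decrease} and passing to the shifted merit function via Lemma~\ref{lem:taubar-props}(i)--(ii), I would split the quadratic term $\tfrac{\tau_k}{4\alpha_k}\|s_k\|_2^2$ into two equal halves, exactly as in the computation~\eqref{eq:merit-function-reduction}. One half, written as $\tfrac{\tau_k\alpha_k}{8}(\|s_k\|_2/\alpha_k)^2$, controls the stationarity residual through the identity $-\tfrac{1}{\alpha_k}s_k = g_k + g_{r,k} + J_k^T y_k + z_k$ in~\eqref{eq:stationarity-old}; the other half, $\tfrac{\tau_k}{8\alpha_k}\|s_k\|_2^2$, controls the complementarity/feasibility residual $\|\min\{x_k,-z_k\}\|_2$ through Lemma~\ref{lem:snorm-bound}. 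The decisive new ingredient relative to Theorem~\ref{thm:no-licq} is that the feasibility term $\sigma_c(\|c_k\|_2 - \|c_k + J_k s_k\|_2)$ can now be bounded below, via Lemma~\ref{lem:numerator}, by a multiple of $\|c_k\|_2^2$ rather than $\|v_k(1)\|_2^2$; this is precisely the constraint-violation component appearing in $\chi_k$. Inserting $\tau_k \geq \taumin$, $\alphamin \leq \alpha_k \leq \alpha_0$, and $\|J_k^T J_k\|_2 \leq \kappaJ^2$ then collapses the three contributions into $\kappaPhi\,\chi_k^2$ with the stated constant.

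With this per-iteration estimate in place, I would telescope over the successful iterations $k_1 \leq k < k_2$: monotonicity of $\{\Phibar_{\tau_k}(x_k)\}$ (Lemma~\ref{lem:taubar-props}(iii)) together with nonnegativity of $\Phibar$ gives $\Phibar_{\tau_0}(x_0) \geq \sum_{k=k_1}^{k_2-1}\kappaPhi\,\chi_k^2 \geq (k_2-k_1)\kappaPhi\,\epsilon^2$, and substituting $\Phibar_{\tau_0}(x_0) = \tau_0(f(x_0) - \finf + r(x_0)) + \|c(x_0)\|_2$ yields~\eqref{eq:diff-of-its-old}. For the concluding ``moreover'' claim, I would combine this bound on the number of successful iterations on which $\chi_k > \epsilon$ with the bound~\eqref{bd-unsuccessful} on the total number of unsuccessful iterations, in the same manner as the derivation of Theorem~\ref{thm:no-licq}(i)(c).

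I expect the only delicate step to be the bookkeeping in the per-iteration reduction, where one must verify that the two halves of the quadratic term simultaneously dominate the stationarity and complementarity pieces of $\chi_k^2$ while Lemma~\ref{lem:numerator} furnishes the matching $\|c_k\|_2^2$ scaling for the feasibility piece; once the constants are tracked correctly, the minimum in the definition of $\kappaPhi$ falls out naturally. Everything else is a faithful transcription of the no-constraint-qualification analysis with $\|v_k(1)\|_2$ replaced by $\|c_k\|_2$, the constraint qualification entering only through Lemma~\ref{lem:tau-bounded} (for $\taumin$) and Lemma~\ref{lem:numerator} (for the $\sigmamin^2\|c_k\|_2^2$ feasibility bound).
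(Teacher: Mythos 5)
Your proposal is correct and follows essentially the same route as the paper's proof: invoke Lemma~\ref{lem:tau-bounded} and Lemma~\ref{lem:alpha-bounded} to bound $\tau_k$ and $\alpha_k$, split the quadratic term $\tfrac{\tau_k}{4\alpha_k}\|s_k\|_2^2$ into two halves to control the stationarity residual (via~\eqref{kkt:u}) and the complementarity residual (via Lemma~\ref{lem:snorm-bound}), use the second inequality of Lemma~\ref{lem:numerator} for the $\sigmamin^2\|c_k\|_2^2$ feasibility contribution, and then telescope using Lemma~\ref{lem:taubar-props}(iii) before appending the unsuccessful-iteration count~\eqref{bd-unsuccessful}. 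No gaps; the bookkeeping you flag as delicate is exactly the computation carried out in the paper's displayed chain of inequalities.
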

\begin{proof}
Let $\{k_1,k_2\}$ be as in the statement of the theorem. Then, for all $k \in \Scal$ and $k_1 \leq k < k_2$, it follows from Lemma~\ref{lem:taubar-props}(i)--(ii), $k\in\Scal$, \eqref{eq:consequence-of-basic-assumption}, the second inequality of Lemma~\ref{lem:numerator}, and Lemma~\ref{lem:alpha-bounded} that
\begin{equation}\label{eq:merit-function-reduction-old}
\begin{aligned}
& \Phibar_{\tau_k}(x_k) - \Phibar_{\tau_{k+1}}(x_{k+1}) 
\geq \Phibar_{\tau_k}(x_k) - \Phibar_{\tau_k}(x_{k+1}) 
= \Phi_{\tau_k}(x_k) - \Phi_{\tau_k}(x_{k+1}) \\
\geq \ & \eta_\Phi \left( \tfrac{\tau_k}{4\alpha_k} \|s_k\|_2^2 + \sigma_c ( \|c_k\|_2 - \|c_k + J_k s_k\|_2) \right) \\
\geq \ & \eta_\Phi \left[ \tfrac{\tau_k \alpha_k}{4}\left(\tfrac{\|s_k\|_2}{\alpha_k}\right)^2 + \sigma_c \left( \tfrac{\kappa_1}{\kappa_c} \sigmamin^2 \|c_k\|_2^2 \min\left\{\tfrac{1}{1+\kappaJ^2}, \kappav\alpha_k\right\} \right) \right] \\
= \ & \eta_\Phi \left[ \tfrac{\tau_k \alpha_k}{8}\left(\tfrac{\|s_k\|_2}{\alpha_k}\right)^2 + \tfrac{\tau_k \|s_k\|_2^2}{8\alpha_k} + \sigma_c \left( \tfrac{\kappa_1}{\kappa_c} \sigmamin^2 \|c_k\|_2^2 \min\left\{\tfrac{1}{1+\kappaJ^2}, \kappav\alpha_{\min} \right\} \right) \right].
\end{aligned}
\end{equation}
Lemma~\ref{lem:snorm-bound}, \eqref{eq:merit-function-reduction-old}, \eqref{kkt:u}, \eqref{bound-tau}, \eqref{eq:lb-alpha-dependence-on-tau}, and $\alpha_k \leq \alpha_0$ for all $k \geq 0$ give
\begin{align*}
& \Phibar_{\tau_k}(x_k) - \Phibar_{\tau_{k+1}}(x_{k+1}) \\
\geq \ & \eta_\Phi \left[ \tfrac{\tau_k \alpha_k}{8}\|g_k + g_{r,k} + J_k^T y_k + z_k\|_2^2 + \tfrac{\tau_k}{8\alpha_k}\|\min\{x_k,-z_k\}\|_2^2 \right. \\
& \left. + \sigma_c \left( \tfrac{\kappa_1}{\kappa_c} \sigmamin^2 \|c_k\|_2^2 \min\left\{\tfrac{1}{1+\kappaJ^2}, \kappav\alpha_{\min}\right\} \right) \right] \\
\geq \ & \eta_\Phi \left[ \tfrac{\tau_{\min} \alpha_{\min}}{8}\|g_k + g_{r,k} + J_k^T y_k + z_k\|_2^2 + \tfrac{\tau_{\min}}{8\alpha_0}\|\min\{x_k,-z_k\} \|_2^2 \right. \\
& \left. + \sigma_c \left( \tfrac{\kappa_1}{\kappa_c} \sigmamin^2 \|c_k\|_2^2 \min\left\{\tfrac{1}{1+\kappaJ^2}, \kappav\alpha_{\min}\right\} \right) \right] \\
\geq \ & \kappaPhi \chi_k^2
\end{align*}
where $\kappaPhi$ is defined in the statement of the current theorem. Using this inequality, Lemma~\ref{lem:taubar-props}(iii), and nonnegativity of $\Phibar_{\tau}$ for all $\tau\in\R{}_{>0}$, we find that 
\begin{align*}
    \Phibar_{\tau_0}(x_0)
    &\geq \Phibar_{\tau_{k_1}}(x_{k_1}) 
    \geq \Phibar_{\tau_{k_1}}(x_{k_1}) - \Phibar_{\tau_{k_2}}(x_{k_2})  \\
    &= \sum_{k = k_1}^{k_2-1} \big(\Phibar_{\tau_k}(x_k) - \Phibar_{\tau_{k+1}}(x_{k+1}) \big) 
    \geq \sum_{k = k_1}^{k_2-1}\kappaPhi \chi_k^2, 
\end{align*}
which may be combined with $\chi_k > \epsilon$ for all iterations $k_1 \leq k \leq k_2$ to conclude that
$$
\Phibar_{\tau_0}(x_0)
\geq (k_2-k_1)\kappaPhi\epsilon^2,
$$
from which~\eqref{eq:diff-of-its} follows. The final result in the theorem, namely the claimed upper bound on the maximum iterations before $\chi_k \leq \epsilon$, follows from what we just proved and the fact that maximum number of unsuccessful iterations is bounded as in~\eqref{bd-unsuccessful}.
\end{proof}

\subsubsection{Analysis under a limit-point constraint qualification}\label{sec:limit-licq}

The analysis in this section is performed under Assumption~\ref{ass:basic} and the following two assumptions.  Before stating them, we remark  that all of the results from Section~\ref{sec:no-cq} still hold.

\begin{assumption}\label{ass:X-bounded}
The set $\Xcal$ in Assumption~\ref{ass:basic} is bounded.
\end{assumption}

\begin{assumption}\label{ass:limit-licq}
Let $\Lcal$ denote the set of limit points of the sequence $\{x_k\}$ generated by Algorithm~\ref{alg:main}. Every $x_*\in\Lcal$ satisfies the LICQ, i.e., if $x_*\in \Lcal$, then $[J(x_*)^T, I_{\Acal(x_*)}^T]^T$ has full row rank with $I_{\Acal(x_*)}$ denoting the subset of the rows of the identity matrix $I$ that corresponds to the index set $\Acal(x_*) := \{i \in [n]: [x_*]_i = 0\}$.
\end{assumption} 

The previous assumption has important consequences in terms of a certain type of infeasible point (see Lemma~\ref{lem:opt-equivalence}(ii)), as we now define.


\begin{definition}
We say that $\xbar\in\R{n}$ is an \emph{infeasible stationary point (ISP)} for problem~\eqref{prob:general} if and only if $\xbar\in\Omega$, $\xbar = \Proj_{\Omega}(\xbar - J(\xbar)^T c(\xbar))$, and $c(\xbar) \neq 0$.
\end{definition}

We now show that any limit point of the sequence of iterates cannot be an ISP.
\begin{lemma}\label{lem:ISP}
If $x_*$ is a limit point of $\{x_k\}$, then $x_*$ cannot be an ISP.
\end{lemma}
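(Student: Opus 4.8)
The plan is to argue by contradiction, converting the geometric fixed-point condition in the definition of an ISP into a linear-algebraic identity that the LICQ at $x_*$ cannot accommodate. Suppose that $x_*$ is a limit point of $\{x_k\}$ and also an ISP, so that $x_*\in\Omega$, $c(x_*)\neq 0$, and $x_* = \Proj_{\Omega}(x_* - J(x_*)^T c(x_*))$. First I would invoke the projection theorem~\cite[Proposition 1.1.9]{bertsekas2009convex}, exactly as in the proof of Lemma~\ref{lem:opt-equivalence}(i), to rewrite this fixed-point condition as the normal-cone membership $-J(x_*)^T c(x_*) \in N_{\Omega}(x_*)$.

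Next I would exploit the explicit structure of the normal cone of the nonnegative orthant: for $x_*\in\Omega$, a vector $\zeta$ lies in $N_{\Omega}(x_*)$ if and only if $\zeta_i = 0$ for every inactive index $i\in\Ical(x_*)$ and $\zeta_i \leq 0$ for every active index $i\in\Acal(x_*)$. Applying this to $\zeta := -J(x_*)^T c(x_*)$ shows that $\zeta$ is supported on the active set, hence can be written as $\zeta = I_{\Acal(x_*)}^T \nu$ for the subvector $\nu := [\zeta]_{\Acal(x_*)}$. Rearranging yields the identity
\[
J(x_*)^T c(x_*) + I_{\Acal(x_*)}^T \nu = 0,
\qquad\text{i.e.,}\qquad
\begin{bmatrix} J(x_*)^T & I_{\Acal(x_*)}^T \end{bmatrix}
\begin{bmatrix} c(x_*) \\ \nu \end{bmatrix} = 0.
\]

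Finally, since $x_*$ is a limit point, Assumption~\ref{ass:limit-licq} guarantees that $[J(x_*)^T, I_{\Acal(x_*)}^T]^T$ has full row rank, equivalently that $[J(x_*)^T, I_{\Acal(x_*)}^T]$ has full column rank and therefore annihilates only the zero vector. The displayed identity then forces $c(x_*) = 0$, contradicting $c(x_*) \neq 0$, and this contradiction completes the proof. The only step that requires genuine care is the second one: I must verify the normal-cone characterization of the orthant so that $-J(x_*)^T c(x_*)$ is represented purely through the active-set rows $I_{\Acal(x_*)}^T$, since this representation is precisely what couples the stationarity condition to the LICQ full-rank argument. Beyond this bookkeeping the argument is routine; in particular, it relies only on Assumption~\ref{ass:limit-licq} and the definition of an ISP, and does not require Assumption~\ref{ass:X-bounded}, which is needed elsewhere in this section merely to guarantee that limit points exist.
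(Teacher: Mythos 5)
Your argument is correct and follows essentially the same route as the paper's proof: convert the projection fixed-point condition into stationarity for the feasibility problem, observe that $J(x_*)^T c(x_*)$ is then supported on the active set, and invoke Assumption~\ref{ass:limit-licq} to force $c(x_*)=0$. The only cosmetic difference is that you apply the full-column-rank of $[J(x_*)^T, I_{\Acal(x_*)}^T]$ directly to the concatenated identity, whereas the paper passes through the multiplier $z_*$ of problem~\eqref{prob:isp} and the full row rank of the inactive-column submatrix of $J(x_*)$ --- equivalent consequences of the same assumption.
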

\begin{proof}
Let $x_*\in\R{n}$ be a limit point of $\{x_k\}$.  Suppose that $x_*\in\Omega$ and $x_* = \Proj_{\Omega}(x_* - J(x_*)^T c(x_*))$.  The proof will be complete if we can show that $c(x_*) = 0$ since this would prove that $x_*$ is not an ISP.  Thus, we now prove that $c(x_*) = 0$.

It follows using the same proof as in Lemma~\ref{lem:opt-equivalence} with $x_k$ replaced by $x_*$ that $x_* = \Proj_{\Omega}(x_* - J(x_*)^T c(x_*))$ implies that $x_*$ is a first-order KKT point for the feasibility problem~\eqref{prob:isp}.  Therefore, there exists $z_*\in\R{n}_{\geq 0}$ satisfying $x_*\cdot z_* = 0$ (componentwise), and $J(x_*)^T c(x_*) = z_*$. It follows from these equations and $\Ical(x_*) = [n]\setminus\Acal(x_*)$ that $[J(x_*)^T c(x_*)]_{\Ical(x_*)} = 0$, where we also note that $\Ical(x_*) \neq \emptyset$ as a consequence of Assumption~\ref{ass:limit-licq}. Letting $J_{\Ical(x_*)}(x_*)$ denote the columns of $J(x_*)$ that correspond to the indices in $\Ical(x_*)$, it follows from above that
$0 = [J(x_*)^T c(x_*)]_{\Ical(x_*)}
= [J_{\Ical(x_*)}(x_*)]^T c(x_*)$. 
Since $J_{\Ical(x_*)}(x_*)$ must have full row rank (see~\cite[Lemma~2.1.3]{Rob07}), it follows that $c(x_*) = 0$, which completes the proof.
\end{proof}

The next result bounds $\|v_k(1)\|_2$ by the infeasibility of the equality constraints.

\begin{lemma}\label{lem:vk-bound-by-ck}
For all $k\in\N{}$, it holds that $\|v_k(1)\|_2 \leq \kappaJ\|c_k\|_2$.
\end{lemma}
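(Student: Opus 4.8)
The plan is to exploit the nonexpansiveness of the Euclidean projection onto the closed convex set $\Omega$ together with the feasibility of the current iterate. Recall from~\eqref{def:vkbeta} that $v_k(1) = \Proj_{\Omega}(x_k - J_k^T c_k) - x_k$, and that every iterate satisfies $x_k\in\Omega$ in Algorithm~\ref{alg:main}, so that $\Proj_{\Omega}(x_k) = x_k$. The first step is therefore to rewrite $v_k(1) = \Proj_{\Omega}(x_k - J_k^T c_k) - \Proj_{\Omega}(x_k)$, which expresses the quantity of interest as the difference of the projections of two points that differ by exactly $-J_k^T c_k$.

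The second step is to invoke nonexpansiveness of the projection, namely $\|\Proj_{\Omega}(a) - \Proj_{\Omega}(b)\|_2 \leq \|a - b\|_2$ for all $\{a,b\}\subset\R{n}$, which is a standard consequence of the projection theorem~\cite[Proposition 1.1.9]{bertsekas2009convex} already cited in Section~\ref{sec:preliminary}. Applying it with $a = x_k - J_k^T c_k$ and $b = x_k$ immediately yields $\|v_k(1)\|_2 \leq \|J_k^T c_k\|_2$.

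The final step is a routine norm bound: using submultiplicativity of the induced matrix norm together with the uniform bound $\|\nabla c(x)^T\|_2 \leq \kappaJ$ from~\eqref{eq:consequence-of-basic-assumption}, we obtain $\|J_k^T c_k\|_2 \leq \|J_k^T\|_2 \|c_k\|_2 \leq \kappaJ \|c_k\|_2$, which gives the claimed inequality.

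There is essentially no hard part here; the only point requiring care is recognizing that the feasibility condition $x_k\in\Omega$ permits the substitution $x_k = \Proj_{\Omega}(x_k)$, which converts the seemingly asymmetric definition of $v_k(1)$ into a difference of projections to which nonexpansiveness applies directly. If one wished to avoid appealing to nonexpansiveness as a black box, an alternative would be to argue from the variational characterization of the projection as in the proof of Lemma~\ref{lem:opt-equivalence}, but the nonexpansiveness route is the cleanest.
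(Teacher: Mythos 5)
Your proof is correct and follows exactly the same route as the paper's: rewrite $x_k = \Proj_{\Omega}(x_k)$ using feasibility of the iterate, apply nonexpansiveness of the projection onto $\Omega$, and then bound $\|J_k^T c_k\|_2 \leq \kappaJ\|c_k\|_2$ via~\eqref{eq:consequence-of-basic-assumption}. No gaps.
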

\begin{proof}
It follows from the definition of $v_k(1)$ in~\eqref{eq:beta-condition}, $x_k\in\Omega$ for all $k\in\N{}$ by how Algorithm~\ref{alg:main} is designed, non-expansivity of the projection operator, and~\eqref{eq:consequence-of-basic-assumption} that
\begin{align*}
\|v_k(1)\|_2 
&= \|\Proj_{\Omega}(x_k - J_k^Tc_k) - x_k\|_2
= \|\Proj_{\Omega}(x_k - J_k^Tc_k) - \Proj_{\Omega}(x_k)\|_2 \\
&\leq \|J_k^Tc_k\|_2
\leq \kappaJ\|c_k\|_2,
\end{align*}
which completes the proof.
\end{proof}

We can now prove that our infeasiblity measure converges to zero.

\begin{lemma}\label{lem:vk-to-zero-true-limit}
The iterate sequence $\{x_k\}$ satisfies $\lim_{k\to\infty} \|v_k(1)\|_2 = 0$.    
\end{lemma}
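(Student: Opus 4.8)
The plan is to branch on the dichotomy established in Theorem~\ref{thm:no-licq}, handling the two mutually exclusive cases separately: case~(i), in which there exists $\taumin > 0$ with $\tau_k \geq \taumin$ for all $k$, and case~(ii), in which $\lim_{k\to\infty}\tau_k = 0$. In both cases the goal is $\lim_{k\to\infty}\|v_k(1)\|_2 = 0$, but the two cases call for rather different arguments.

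In case~(i) I would reuse the per-iteration estimate derived inside the proof of Theorem~\ref{thm:no-licq}(i), namely $\Phibar_{\tau_k}(x_k) - \Phibar_{\tau_{k+1}}(x_{k+1}) \geq \kappabarPhi\,\chibar_k^2$, which holds for every successful iteration $k\in\Scal$ (the restriction $k_1 \leq k < k_2$ there was only for counting, not for validity of the bound). Summing this inequality and using that $\{\Phibar_{\tau_k}(x_k)\}$ is monotonically decreasing by Lemma~\ref{lem:taubar-props}(iii) and bounded below by zero yields $\sum_{k\in\Scal}\kappabarPhi\,\chibar_k^2 \leq \Phibar_{\tau_0}(x_0) < \infty$, so $\chibar_k \to 0$ along $\Scal$. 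Since Lemma~\ref{lem:alpha-bounded} guarantees only finitely many unsuccessful iterations in this case, it follows that $\chibar_k \to 0$ as $k\to\infty$, and because $\|v_k(1)\|_2 \leq \chibar_k$ by~\eqref{def:chibark}, the claim holds in case~(i).

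Case~(ii) is the crux. Here Theorem~\ref{thm:no-licq}(ii) only delivers a subsequence $\Kcal$ along which $\|v_k(1)\|_2 \to 0$, and the task is to upgrade this subsequential statement to a full limit. The key device is to show that the constraint violation $\|c_k\|_2$ itself converges. I would write $\Phibar_{\tau_k}(x_k) = \tau_k\big(f_k - \finf + r_k\big) + \|c_k\|_2$ and observe that the left-hand side converges (monotone and bounded below, by Lemma~\ref{lem:taubar-props}(iii)), while $\tau_k\big(f_k - \finf + r_k\big) \to 0$ since $\tau_k \to 0$ and $f_k - \finf + r_k$ is uniformly bounded by Assumption~\ref{ass:basic} and boundedness of $\Xcal$ from Assumption~\ref{ass:X-bounded}; hence $\|c_k\|_2$ converges to some $\bar\Phi_* \geq 0$.

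It then remains to pin down $\bar\Phi_* = 0$. Using boundedness of $\Xcal$, I would extract from $\Kcal$ a subsequence along which $x_k \to x_{**}$; continuity of the map $x \mapsto \Proj_{\Omega}(x - J(x)^T c(x)) - x$ together with $\|v_k(1)\|_2 \to 0$ along $\Kcal$ forces $x_{**} = \Proj_{\Omega}(x_{**} - J(x_{**})^T c(x_{**}))$ with $x_{**}\in\Omega$, so $x_{**}$ is a feasibility-stationary point. Lemma~\ref{lem:ISP} prohibits such a limit point from being an ISP, hence $c(x_{**}) = 0$; continuity then gives $\|c_k\|_2 \to \|c(x_{**})\|_2 = 0$ along this subsequence, and combined with the full convergence $\|c_k\|_2 \to \bar\Phi_*$ established above this yields $\bar\Phi_* = 0$. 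Therefore $\|c_k\|_2 \to 0$, and Lemma~\ref{lem:vk-bound-by-ck} finishes via $\|v_k(1)\|_2 \leq \kappaJ\|c_k\|_2 \to 0$. The main obstacle is exactly this upgrade in case~(ii): the weaker subsequential statement of Theorem~\ref{thm:no-licq}(ii) does not by itself rule out oscillation of $\|v_k(1)\|_2$, and the essential insight is that monotone convergence of the shifted merit function, because $\tau_k\to 0$, makes $\|c_k\|_2$ converge, so that a single limit-point argument through Lemma~\ref{lem:ISP} suffices to force the limit to zero.
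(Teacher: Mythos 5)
Your proposal is correct, and the overall architecture (splitting on the dichotomy of Theorem~\ref{thm:no-licq}, using Lemma~\ref{lem:ISP}, Lemma~\ref{lem:vk-bound-by-ck}, Assumption~\ref{ass:X-bounded}, and the monotonicity of $\{\Phibar_{\tau_k}(x_k)\}$) matches the paper's ingredients, but your handling of the case $\tau_k\to 0$ is a genuinely different and, in my view, cleaner route. The paper argues by contradiction: it posits a subsequence $\Kcal_2$ on which $\|v_k(1)\|_2$ stays bounded away from zero, interleaves it with the known vanishing subsequence $\Kcal_1$ via the map $k\mapsto \khat(k)$, and uses the monotone decrease of $\Phibar$ to sandwich $\|c(x_{\khat(k)})\|_2$ between $v_{\min}/(2\kappaJ)$ and $2\|c_k\|_2$ for large $k\in\Kcal_1$, concluding that every limit point of $\{x_k\}_{k\in\Kcal_1}$ is an ISP, contradicting Lemma~\ref{lem:ISP}. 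You instead observe directly that, since $\Phibar_{\tau_k}(x_k)$ converges and $\tau_k(f_k-\finf+r_k)\to 0$ (which does require noting that $r$ is bounded on the bounded set $\Xcal$, a consequence of convexity plus the bounded subdifferential in Assumption~\ref{ass:basic} --- worth stating explicitly), the full sequence $\|c_k\|_2$ converges; a single limit point extracted from the vanishing subsequence then forces that limit to be zero via Lemma~\ref{lem:ISP}, and Lemma~\ref{lem:vk-bound-by-ck} converts this into the desired conclusion. This avoids the interleaved-subsequence construction entirely and yields the stronger intermediate fact $\lim_{k\to\infty}\|c_k\|_2=0$ as a byproduct. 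Your treatment of the bounded-$\tau_k$ case via summability of $\kappabarPhi\chibar_k^2$ and finiteness of the unsuccessful iterations is essentially the paper's Case~2 argument, just phrased without the contradiction framing; the only point to be careful about is that the per-iteration decrease bound from Theorem~\ref{thm:no-licq}(i) indeed holds for every $k\in\Scal$ once $\tau_k\geq\taumin$ and $\alpha_k\geq\alphamin$ are in hand, which you correctly note.
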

\begin{proof}
From Theorem~\ref{thm:complexity}, it follows that there exists a subsequence $\Kcal_1 \subseteq \N{}$ 
such that $\lim_{k \in \Kcal_1} \|v_k(1)\|_2 = 0$. Now, for the purpose of reaching a contradiction, assume that there exists a subsequence of iterations $\Kcal_2\subseteq\N{}\setminus\Kcal_1$ and a scalar $v_{\min}\in\R{}_{>0}$ such that $\|v_k(1)\|_2 \geq v_{\min}$ for all $k\in \Kcal_2$. We now proceed by considering two cases. \\[0.4em]
\textbf{Case 1: $\{\tau_k\} \to 0$.}  The definitions of $\Kcal_1$ and $\Kcal_2$ allow us to define, for each $k\in\Kcal_1$, the quantity $\khat(k)$ as the smallest iteration in $\Kcal_2$ that is strictly larger than $k$. We can use this definition, Lemma~\ref{lem:vk-bound-by-ck}, $\{\tau_k\} \to 0$, \eqref{eq:consequence-of-basic-assumption},  Lemma~\ref{lem:taubar-props}(iii), and nonnegativity of $r$ to conclude that the following holds for each sufficiently large $k\in\Kcal_1$: 
\begin{align*}
\frac{v_{\min}}{2\kappaJ} &\leq \frac{\|c(x_{\khat(k)})\|_2}{2} \leq \tau_{\khat(k)}\big(f_{\khat(k)} -f_{\inf} + r(x_{\khat(k)})\big) + \|c(x_{\khat(k)})\|_2 = \Phibar_{\tau_{\khat(k)}}(x_{\khat(k)}) \\
&\leq \Phibar_{\tau_{k}}(x_{k}) = \tau_{k}\big(f_{k} -f_{\inf} + r(x_{k})\big) + \|c(x_{k})\|_2 \leq 2 \|c_k\|_2.
\end{align*}
It follows from this inequality and the definition of $\Kcal_1$ that 
$$
\lim_{k\in\Kcal_1} \|v_k(1)\|_2 = 0
\ \text{and} \ \ 
\liminf_{k\in\Kcal_1}  \|c_k\|_2 \geq \frac{v_{\min}}{2\kappaJ} > 0.
$$
Therefore, every limit point of $\{x_k\}_{k\in\Kcal_1}$ must be an ISP, and at least one such limit point must exist as a consequence of Assumption~\ref{ass:X-bounded}.  This contradicts Lemma~\ref{lem:ISP}.\\[0.4em]
\noindent\textbf{Case 2: $\{\tau_k\}$ is bounded away from zero.} In this case, it follows from Theorem~\ref{thm:no-licq}(i) that the proximal parameter sequence $\{\alpha_k\}$ is also bounded away from zero.  Given the manner in which both sequences are defined in Algorithm~\ref{alg:main}, we can conclude that there exists $\khat\in\N{}$ such that $\tau_k = \tau_{\khat} > 0$ and $\alpha_k = \alpha_{\khat} > 0$ for all $k \geq \khat$. We may now use the same logic as in the proof of Lemma~\ref{thm:no-licq}(i) and  \eqref{eq:consequence-of-basic-assumption} to obtain
\begin{equation*}
\begin{aligned}
\infty &>  \Phibar_{\tau_0}(x_0) \geq \sum_{k=0}^{\infty}(\Phibar_{\tau_k}(x_k) - \Phibar_{\tau_{k+1}}(x_{k+1})) \\
&\geq  \sum_{\khat \leq k \in \Scal}(\Phibar_{\tau_k}(x_k) - \Phibar_{\tau_{k+1}}(x_{k+1})) \\
&\geq  \sum_{\khat \leq k \in \Scal} \eta_\Phi
\tfrac{\sigma_c\kappa_1}{\kappa_c}\alpha_{\khat}\|v_k(1)\|_2^2 \min \left\{\tfrac{1}{1+ \kappaJ^2}, \kappa_v\alpha_{\khat} \right\},
\end{aligned}
\end{equation*}
which implies that $\lim_{k \in \Scal} \|v_k(1)\|_2 = 0$. Combining this result with the fact that $x_{k+1} = x_k$ whenever $k\notin\Scal$ and that the definition of $v_k(1)$ depends only on $x_k$, the projection onto $\Omega$ (which is continuous), and the continuous functions $c$ and $J$, it follows that $\lim_{k\to\infty} \|v_k(1)\|_2 = 0$.  This contradicts the definition of $\Kcal_2$.

Since we have shown that both \textbf{Case 1} and \textbf{Case 2} cannot occur, and these are the only cases that can possibly occur, we must conclude that our original assumption was incorrect, namely the existence of the set $\Kcal_2$.  This completes the proof. 
\end{proof}

Next, we formally establish that $\Lcal$ is a compact set. 
\begin{lemma}\label{lem:limit-point-set-compact}
The set $\Lcal$ in Assumption~\ref{ass:limit-licq} is compact.
\end{lemma}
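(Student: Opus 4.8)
The plan is to show that $\Lcal$ is both bounded and closed, since these two properties together establish compactness in $\R{n}$.

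Boundedness follows immediately from Assumption~\ref{ass:X-bounded}. Since the iterate sequence $\{x_k\}$ is contained in the set $\Xcal$, which is now assumed bounded, every limit point of $\{x_k\}$ must also lie in the closure of $\Xcal$. A bounded set has a bounded closure, so $\Lcal \subseteq \overline{\Xcal}$ is bounded. This step is routine and requires no more than the definition of a limit point together with Assumption~\ref{ass:X-bounded}.

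The substantive part is closedness. **First I would** take an arbitrary convergent sequence $\{x_*^{(j)}\} \subseteq \Lcal$ with $x_*^{(j)} \to x_*$ as $j\to\infty$, and show that $x_* \in \Lcal$, i.e.\ that $x_*$ is itself a limit point of $\{x_k\}$. The natural approach is a diagonal argument: for each $j$, since $x_*^{(j)}$ is a limit point of $\{x_k\}$, there is an index $k_j$ (which I can take to be strictly increasing in $j$, and with $k_j$ large) such that $\|x_{k_j} - x_*^{(j)}\|_2 < 1/j$. Then by the triangle inequality,
\begin{equation*}
\|x_{k_j} - x_*\|_2 \leq \|x_{k_j} - x_*^{(j)}\|_2 + \|x_*^{(j)} - x_*\|_2 < \tfrac1j + \|x_*^{(j)} - x_*\|_2 \to 0,
\end{equation*}
so that $\{x_{k_j}\}$ is a subsequence of $\{x_k\}$ converging to $x_*$, which exhibits $x_*$ as a limit point of $\{x_k\}$; hence $x_* \in \Lcal$ and $\Lcal$ is closed.

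**The main obstacle** is purely bookkeeping: I must ensure the diagonal indices $\{k_j\}$ can be chosen strictly increasing so that $\{x_{k_j}\}$ is genuinely a subsequence (not a finite or repeating selection). This is guaranteed because each $x_*^{(j)}$ being a limit point means infinitely many iterates lie within any neighborhood, so after choosing $k_1,\dots,k_{j-1}$ I can always pick $k_j > k_{j-1}$ with $\|x_{k_j} - x_*^{(j)}\|_2 < 1/j$. Once the strict monotonicity is secured, closedness follows from the triangle-inequality estimate above. Combining boundedness and closedness yields that $\Lcal$ is compact, completing the proof.
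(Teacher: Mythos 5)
Your proposal is correct and follows essentially the same argument as the paper: boundedness from Assumption~\ref{ass:X-bounded}, and closedness via a diagonal selection of strictly increasing indices $k_j$ with $\|x_{k_j}-x_*^{(j)}\|_2 < 1/j$ followed by the triangle inequality. No gaps.
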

\begin{proof}
By Assumption~\ref{ass:X-bounded}, the set $\Lcal$ is bounded. It remains to show that $\Lcal$ is closed. To this end, suppose that $\{x^\Lcal_j\}_{j\geq 1} \subseteq\Lcal$ and $x^\Lcal\in\R{n}$ satisfy $\lim_{j\to\infty} x^\Lcal_j = x^\Lcal$; we prove that $x^\Lcal\in\Lcal$. Let us define a sequence $\Kcal = \{k_1,k_2,\dots\} \subseteq\N{}$.  In particular, let $k_1$ be the smallest integer such that the iterate $x_{k_1}$ satisfies $\|x^\Lcal_1 - x_{k_1}\|_2 \leq 1$.  We then iteratively define $k_j$ for $j \geq 2$ as the smallest integer $k_j$ such that $k_j > k_{j-1}$ and the iterate $x_{k_j}$ satisfies $\|x^\Lcal_j - x_{k_j}\|_2 \leq 1/j$. In summary, $\Kcal =\{k_1,k_2,\dots\} \subseteq\N{}$ is a strictly monotonically increasing subsequence of $\N{}$ such that $
\|x^\Lcal_j - x_{k_j}\|_2 \leq 1/j$ for all $j$. It follows from this inequality and the triangle inequality that
$$
\|x^\Lcal - x_{k_j}\|_2
\leq \|x^\Lcal - x^\Lcal_j\|_2 + \|x^\Lcal_j - x_{k_j}\|_2
\leq  \|x^\Lcal - x^\Lcal_j\|_2 + \tfrac{1}{j} \ \text{for all $j\geq 1$.}
$$
Combining this inequality with $\lim_{j\to\infty} x^\Lcal_j = x^\Lcal$, it follows that $\lim_{j\to\infty} x_{k_j} = x^\Lcal$, which proves that $x^\Lcal\in\Lcal$ as claimed, thus completing the proof.
%
\end{proof}

The next key lemma uses the function $\delta(\cdot): \R{n}\to\R{}_{> 0}$ defined as
\begin{equation}\label{def:deltamin}
\deltamin(x) := \min_{i \in \Ical(x)} [x]_i,
\end{equation}
which gives a measure for how far the inactive variables at $x$ are from being active.


\begin{lemma}\label{lem:uniform-sigma}
The following hold for the set of limit points $\Lcal$:
\begin{itemize}
\item[(i)] There exist $\nlim \in\N{}$, $\{x^\Lcal_i\}_{i=1}^\nlim \subseteq\Lcal$, and $\{\epsilon^\Lcal_i\}_{i=1}^\nlim \subset\R{}_{>0}$ such that
  \begin{itemize}
  \item[(a)] $\Lcal \subset \cup_{i=1}^\nlim \Bcal(x^\Lcal_i,\epsilon^\Lcal_i)$, and
  \item[(b)] if, for some $j$, it holds that $x\in\Bcal(x^\Lcal_j,\epsilon^\Lcal_j)$, then 
  \begin{subequations}
  \begin{align}
  \|x-x^\Lcal_j\|_2 &\leq \tfrac{1}{3}\deltamin(x^\Lcal_j), \label{cond1-xlj}\\
  \Acal(x) &\subseteq\Acal(x^\Lcal_j),
  \ \text{and}\label{cond2-xlj} \\
  \sigmamin\big([J(x)^T, I_{\Acal(x^\Lcal_j)}^T]^T\big)
&\geq \thalf\sigmamin\big([J(x^\Lcal_j)^T, I_{\Acal(x^\Lcal_j)}^T]^T\big). \label{cond3-xlj}
  \end{align}
  \end{subequations}
  \end{itemize}
\item[(ii)] For the objects in part (i), there exists $\epsminL\in\R{}_{>0}$ such that if $\xbar\in\R{n}$ satisfies $\dist(\xbar,\Lcal) \leq \epsminL$, then $\xbar \in \cup_{i=1}^\nlim \Bcal(x^\Lcal_i,\epsilon^\Lcal_i)$ and there exists $j\in[\nlim]$ such that
\begin{equation*}
\sigmamin\big([J(\xbar)^T, I_{\Acal(x^\Lcal_j)}^T]^T\big)
\geq
\min_{i \in [\nlim]}
\thalf\sigmamin\big([J(x^\Lcal_i)^T,I_{\Acal(x^\Lcal_i)}^T]^T\big)
=: \sigmamin^\Lcal > 0,
\end{equation*}
where the inequality $\sigmamin^\Lcal > 0$ is a consequence of Assumption~\ref{ass:limit-licq}.
\end{itemize}
\end{lemma}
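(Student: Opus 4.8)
The plan is to construct the finite cover in part~(i) one limit point at a time, using continuity together with the LICQ at each limit point, and then to deduce part~(ii) from the compactness of $\Lcal$ via a distance-to-the-complement argument. First I would fix an arbitrary $x^\Lcal\in\Lcal$ and build a suitable radius. Note that $\Ical(x^\Lcal)\neq\emptyset$ (by the same reasoning used in the proof of Lemma~\ref{lem:ISP}, which is a consequence of Assumption~\ref{ass:limit-licq}), so $\deltamin(x^\Lcal)=\min_{i\in\Ical(x^\Lcal)}[x^\Lcal]_i>0$ is well defined and positive. I would first require $\epsilon^\Lcal\leq\tfrac{1}{3}\deltamin(x^\Lcal)$, which immediately delivers~\eqref{cond1-xlj} for every $x\in\Bcal(x^\Lcal,\epsilon^\Lcal)$. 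This same requirement yields~\eqref{cond2-xlj}: for $i\in\Ical(x^\Lcal)$ one has $[x^\Lcal]_i\geq\deltamin(x^\Lcal)$, hence $[x]_i\geq[x^\Lcal]_i-\|x-x^\Lcal\|_2>\tfrac{2}{3}\deltamin(x^\Lcal)>0$, so $i\in\Ical(x)$, i.e.\ $\Acal(x)\subseteq\Acal(x^\Lcal)$. For~\eqref{cond3-xlj} I would invoke that $x\mapsto[J(x)^T,I_{\Acal(x^\Lcal)}^T]^T$ is continuous (since $J=\nabla c^T$ is continuous and the identity rows are indexed by the \emph{fixed} set $\Acal(x^\Lcal)$) and that the smallest-singular-value map is continuous; since it is strictly positive at $x^\Lcal$ by Assumption~\ref{ass:limit-licq}, I can shrink $\epsilon^\Lcal$ further so the value stays at least half of its value at $x^\Lcal$ throughout the ball. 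Taking the minimum of these requirements gives one radius $\epsilon^\Lcal>0$ for which all three conditions hold.

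Ranging $x^\Lcal$ over $\Lcal$ produces an open cover $\{\Bcal(x^\Lcal,\epsilon^\Lcal)\}_{x^\Lcal\in\Lcal}$ of $\Lcal$. Since $\Lcal$ is compact by Lemma~\ref{lem:limit-point-set-compact}, I would extract a finite subcover indexed by $i\in[\nlim]$, which supplies the points $\{x^\Lcal_i\}_{i=1}^\nlim$ and radii $\{\epsilon^\Lcal_i\}_{i=1}^\nlim$ asserted in part~(i).

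For part~(ii), set $U:=\cup_{i=1}^\nlim \Bcal(x^\Lcal_i,\epsilon^\Lcal_i)$, which is open and contains $\Lcal$; as a finite union of bounded balls, $U$ is bounded, so $U^c$ is a nonempty closed set disjoint from $\Lcal$. Because $\Lcal$ is compact and $z\mapsto\dist(z,U^c)$ is continuous and strictly positive on $\Lcal$ (each point of $\Lcal$ lies in the open set $U$), the quantity $d_0:=\min_{z\in\Lcal}\dist(z,U^c)$ is attained and strictly positive; I would then set $\epsminL:=d_0/2>0$. If $\dist(\xbar,\Lcal)\leq\epsminL$ but $\xbar\notin U$, then $\xbar\in U^c$ forces $\dist(\xbar,\Lcal)\geq d_0>\epsminL$, a contradiction, so $\xbar\in U$ and hence $\xbar\in\Bcal(x^\Lcal_j,\epsilon^\Lcal_j)$ for some $j$. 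Applying~\eqref{cond3-xlj} at this $j$ and then bounding below by the minimum over the finite index set gives the claimed inequality with $\sigmamin^\Lcal:=\min_{i\in[\nlim]}\tfrac{1}{2}\sigmamin([J(x^\Lcal_i)^T,I_{\Acal(x^\Lcal_i)}^T]^T)$; positivity of $\sigmamin^\Lcal$ is immediate from Assumption~\ref{ass:limit-licq}, since each term in this finite minimum is positive.

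The main obstacle is getting~\eqref{cond3-xlj} right: the continuity argument succeeds only because the identity block is pinned to the \emph{center's} active set $\Acal(x^\Lcal_j)$ rather than to the moving set $\Acal(x)$; if that block varied with $x$, the matrix would change discontinuously and the singular-value comparison would break down. The inclusion~\eqref{cond2-xlj} is precisely what makes working with the fixed set $\Acal(x^\Lcal_j)$ legitimate. The remaining care is the routine point-set topology verification that $U^c$ is nonempty (which holds since $U$ is a finite union of bounded balls) so that $d_0$ is a genuine positive distance.
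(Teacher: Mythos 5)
Your proposal is correct and follows essentially the same route as the paper: per-point radii chosen via continuity of the singular values (with the identity block pinned to the center's active set) plus the LICQ, a finite subcover extracted by compactness of $\Lcal$ (Lemma~\ref{lem:limit-point-set-compact}), and a uniform margin $\epsminL$ for part~(ii). The only difference is that you spell out details the paper leaves implicit — the triangle-inequality verification of~\eqref{cond2-xlj} and the distance-to-the-complement construction of $\epsminL$ — both of which are sound.
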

\begin{proof}
For $x^\Lcal\in \Lcal$, let $\epsilon(x^\Lcal)\in\R{}_{>0}$ satisfy that if $x\in\Bcal(x^\Lcal,\epsilon(x^\Lcal))$ then $\Ical(x^\Lcal)\subseteq\Ical(x)$, $\|x-x^\Lcal\|_2 \leq \tfrac{1}{3}\deltamin(x^\Lcal)$, and $\sigmamin\big([J(x)^T, I_{\Acal(x^\Lcal)}^T]^T\big) \geq \tfrac{\sigmamin}{2}\big([J(x^\Lcal)^T, I_{\Acal(x^\Lcal)}^T]^T\big)$, where satisfying the third condition is possible because of the continuity of singular values of a matrix with respect to its entries and Assumption~\ref{ass:limit-licq}. It follows that $\cup_{x^\Lcal\in\Lcal} \Bcal(x^\Lcal,\epsilon(x^\Lcal))$ is an open cover of the compact set $\Lcal$ (see Lemma~\ref{lem:limit-point-set-compact}). Using this fact and the definition of a compact set, it follows that there exists a finite subcover, i.e., there exist $\nlim \in\N{}$, $\{x^\Lcal_i\}_{i=1}^\nlim \subseteq\Lcal$, and $\{\epsilon^\Lcal_i\}_{i=1}^\nlim \subset\R{}_{>0}$ such that $\Lcal \subset \cup_{i=1}^\nlim \Bcal(x^\Lcal_i,\epsilon^\Lcal_i)$ and  if, for some $j\in\{1,2,\dots,\nlim\}$, it holds that $x\in\Bcal(x^\Lcal_j,\epsilon^\Lcal_j)$ then 
$\Ical(x^\Lcal_j) \subseteq \Ical(x)$, $\|x-x^\Lcal_j\|_2 \leq \tfrac{1}{3}\deltamin(x^\Lcal_j)$, and $\sigmamin\big([J(x)^T, I_{\Acal(x^\Lcal_j)}^T]^T\big)
\geq \thalf\sigmamin\big([J(x^\Lcal_j)^T, I_{\Acal(x^\Lcal_j)}^T]^T\big)$. Since $\Ical(x^\Lcal_j) \subseteq \Ical(x)$ is equivalent to  $\Acal(x)\subseteq\Acal(x^\Lcal_j)$, we have completed the proof of part (i).

We now prove part (ii).  First, using the \emph{finite} subcover computed in part (i) and the fact that $\Lcal$ is compact, there exists $\epsminL\in\R{}_{>0}$ such that if $x\in\R{n}$ satisfies  $\dist(x,\Lcal)  \leq \epsminL$, then $x\in \cup_{i=1}^\nlim \Bcal(x^\Lcal_i,\epsilon^\Lcal_i)$. Let $\xbar$ be an arbitrary point that satisfies $\dist(\xbar,\Lcal) \leq \epsminL$.  Then, it follows that  there exists $j\in\{1,2,\dots,\nlim\}$ such that $\xbar\in\Bcal(x^\Lcal_j,\epsilon^\Lcal_j)$, which combined with part (i)(b) gives $\Acal(\xbar)\subseteq\Acal(x^\Lcal_j)$ and $\sigmamin\big([J(\xbar)^T, I_{\Acal(x^\Lcal_j)}^T]^T\big)
\geq \thalf\sigmamin\big([J(x^\Lcal_j)^T,I_{\Acal(x^\Lcal_j)}^T]^T\big) \geq \sigmamin^\Lcal > 0$, as claimed.
%
\end{proof}

The next result shows that iterates of the algorithm eventually satisfy the properties of the previous lemma.

\begin{lemma}\label{lem:results-at-xk}
There exists $\kbar\in\N{}$ such that, for each $k\geq \kbar$, there exists a corresponding $j\in[\nlim]$ that satisfies, with $\sigmaminL$ defined in Lemma~\ref{lem:uniform-sigma}(ii), the following:
\begin{subequations}
\begin{align}
  \|x_k-x^\Lcal_j\|_2 &\leq \tfrac{1}{3}\deltamin(x^\Lcal_j), \label{cond1-xlj-xk}\\
  \Acal(x_k) &\subseteq\Acal(x^\Lcal_j),
  \ \text{and}\label{cond2-xlj-xk} \\
  \sigmamin\big([J_k^T, I_{\Acal(x^\Lcal_j)}^T]^T\big)
&\geq \thalf\sigmamin\big([J(x^\Lcal_j)^T, I_{\Acal(x^\Lcal_j)}^T]^T\big) \geq \sigmaminL > 0. \label{cond3-xlj-xk}
\end{align}
\end{subequations}
\end{lemma}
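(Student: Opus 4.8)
The plan is to reduce the entire statement to a single topological fact: the iterate sequence $\{x_k\}$ eventually enters the finite open cover $\cup_{i=1}^\nlim \Bcal(x^\Lcal_i,\epsilon^\Lcal_i)$ of $\Lcal$ constructed in Lemma~\ref{lem:uniform-sigma}, after which all three desired conclusions are handed to us directly. Concretely, once $\dist(x_k,\Lcal)\le\epsminL$ we may invoke Lemma~\ref{lem:uniform-sigma}(ii) with $\xbar = x_k$ to place $x_k$ in the cover and to obtain the singular-value bound \eqref{cond3-xlj-xk}, and then Lemma~\ref{lem:uniform-sigma}(i)(b) supplies \eqref{cond1-xlj-xk} and \eqref{cond2-xlj-xk}. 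Thus the whole content of the lemma lies in proving that $\dist(x_k,\Lcal)\to 0$.

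First I would record the standing setup: by Assumption~\ref{ass:basic} the iterates satisfy $\{x_k\}\subseteq\Xcal$, and by Assumption~\ref{ass:X-bounded} the set $\Xcal$ is bounded; hence $\{x_k\}$ is a bounded sequence. Being bounded and infinite, it has at least one accumulation point, so $\Lcal$ is nonempty, and it is compact by Lemma~\ref{lem:limit-point-set-compact}. Consequently $\dist(\cdot,\Lcal)$ is a well-defined, $1$-Lipschitz (hence continuous) function, which is the structure the contradiction argument below needs.

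Next I would establish the claim $\dist(x_k,\Lcal)\to 0$ by contradiction. If it failed, there would exist $\epsilon>0$ and an infinite index set $\Kcal$ with $\dist(x_k,\Lcal)>\epsilon$ for all $k\in\Kcal$. Boundedness of $\{x_k\}_{k\in\Kcal}$ together with the Bolzano--Weierstrass theorem yields a further subsequence converging to some $\tilde x$; by the very definition of $\Lcal$ we have $\tilde x\in\Lcal$, so $\dist(\tilde x,\Lcal)=0$. But continuity of $\dist(\cdot,\Lcal)$ forces $\dist(\tilde x,\Lcal)=\lim \dist(x_k,\Lcal)\ge\epsilon>0$, a contradiction. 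I would emphasize that this argument uses \emph{only} boundedness of the sequence; it does not require Lemma~\ref{lem:vk-to-zero-true-limit}, which serves instead to guarantee that the points populating $\Lcal$ are genuine (non-infeasible-stationary) limit points elsewhere in the analysis.

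Finally, having proved the claim, I would choose $\kbar\in\N{}$ so that $\dist(x_k,\Lcal)\le\epsminL$ for every $k\ge\kbar$, and fix any such $k$. Applying Lemma~\ref{lem:uniform-sigma}(ii) to $\xbar=x_k$ puts $x_k$ in the cover, so there is an index $j\in[\nlim]$ with $x_k\in\Bcal(x^\Lcal_j,\epsilon^\Lcal_j)$; feeding $x=x_k$ into Lemma~\ref{lem:uniform-sigma}(i)(b) then yields \eqref{cond1-xlj-xk} and \eqref{cond2-xlj-xk}, while \eqref{cond3-xlj-xk} with its lower bound $\sigmaminL$ is precisely the conclusion attached to that same $j$ in part (ii). The one point to verify carefully is that all three conclusions hold for a \emph{common} $j$: this is automatic because the index produced in the proof of part (ii) is chosen exactly as the one whose ball contains $x_k$. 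The main obstacle is therefore entirely the distance-to-limit-set convergence; everything after it is bookkeeping against the already-established Lemma~\ref{lem:uniform-sigma}.
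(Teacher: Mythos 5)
Your proposal is correct and follows essentially the same route as the paper: pick $\kbar$ so that $\dist(x_k,\Lcal)\leq\epsminL$ for all $k\geq\kbar$, then invoke Lemma~\ref{lem:uniform-sigma}(ii) to locate $x_k$ in a ball $\Bcal(x^\Lcal_j,\epsilon^\Lcal_j)$ and read off all three conditions for that same $j$. The only difference is that you spell out, via boundedness of $\Xcal$ (Assumption~\ref{ass:X-bounded}) and a Bolzano--Weierstrass contradiction, why $\dist(x_k,\Lcal)\to 0$, a step the paper asserts without proof; this is a correct and welcome elaboration, not a divergence.
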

\begin{proof}
 Let $\epsminL > 0$ be defined as in Lemma~\ref{lem:uniform-sigma}(ii).  Since  $\Lcal$ is the set of all limit points, there exists an iteration $\kbar$ such that $\dist(x_k,\Lcal) \leq \epsminL$ for all $k\geq \kbar$ (this $\kbar$ is now the $\kbar$ whose existence is claimed in the statement of the current lemma). For the remainder of the proof, consider arbitrary $k\geq \kbar$.  It follows from the definition of $\kbar$ that $\dist(x_k,\Lcal) \leq \epsminL$, and then from  Lemma~\ref{lem:uniform-sigma}(ii) that there exists $j\in[\nlim]$ such that $x_k\in\Bcal(x^\Lcal_j,\epsilon^\Lcal_j)$. Conditions~\eqref{cond1-xlj-xk}--\eqref{cond3-xlj-xk} now follow from Lemma~\ref{lem:uniform-sigma}.  
\end{proof}

We now give a lower bound on $\|v_k(1)\|_2$ in terms of $\|c_k\|_2$, which is crucial to giving a lower bound on the merit parameter sequence. The result uses the constant
\begin{equation}\label{def:dletaminL}
\deltaminL:= \min_{j\in[\nlim]} \deltamin(x^\Lcal_j) > 0.
\end{equation}

\begin{lemma}\label{lem:vk-ck-relation-limit-licq}
For all sufficiently large $ k \in \mathbb{N}$, it holds that $\|v_k(1)\|_2 \geq \sigmamin^\Lcal\|c_k\|_2$, where the positive constant $\sigmaminL$ is defined in Lemma~\ref{lem:uniform-sigma}(ii).
\end{lemma}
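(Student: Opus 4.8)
The plan is to adapt the argument in the proof of Lemma~\ref{lem:vk-ck-relation}, replacing the Cauchy-point active set $\Acal_k^v$ by the limit-point active set $\Acal(x^\Lcal_j)$ and working at the step size $\beta = 1$. First I would restrict attention to $k$ large enough that two conditions hold simultaneously: (a) Lemma~\ref{lem:results-at-xk} supplies an index $j\in[\nlim]$ satisfying~\eqref{cond1-xlj-xk}--\eqref{cond3-xlj-xk}; and (b) $\|v_k(1)\|_2 < \tfrac{2}{3}\deltaminL$, which is achievable by Lemma~\ref{lem:vk-to-zero-true-limit} together with $\deltaminL > 0$ (see~\eqref{def:dletaminL}). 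Throughout I write $\Acal_k^1 := \Acal(x_k+v_k(1))$ and recall that $x_k + v_k(1) = \Proj_\Omega(x_k - J_k^Tc_k)$, so that $[x_k+v_k(1)]_i = \max\{[x_k - J_k^Tc_k]_i,\,0\}$ componentwise.

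The crucial step is to establish the set inclusion $\Acal_k^1 \subseteq \Acal(x^\Lcal_j)$, which I would verify via its contrapositive: fix $i\in\Ical(x^\Lcal_j)$ and show $[x_k+v_k(1)]_i > 0$. Indeed, $[x^\Lcal_j]_i \geq \deltamin(x^\Lcal_j)$ combined with~\eqref{cond1-xlj-xk} gives $[x_k]_i \geq \tfrac{2}{3}\deltamin(x^\Lcal_j) \geq \tfrac{2}{3}\deltaminL$, while condition (b) gives $|[v_k(1)]_i| \leq \|v_k(1)\|_2 < \tfrac{2}{3}\deltaminL$; adding these yields $[x_k+v_k(1)]_i > 0$, hence $i\notin\Acal_k^1$. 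This is the main obstacle of the proof, since it is exactly where the convergence $\|v_k(1)\|_2\to 0$ (Lemma~\ref{lem:vk-to-zero-true-limit}) and the uniform separation $\deltaminL>0$ must interact, and it is what forces the ``sufficiently large $k$'' qualification; by contrast, the corresponding inclusion in Lemma~\ref{lem:vk-ck-relation} was immediate because there the active set was defined directly from the step.

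With the inclusion in hand, I would mirror the coordinate-wise computation of Lemma~\ref{lem:vk-ck-relation}: define $w_k\in\R{n}$ by $[w_k]_i = -[J_k^Tc_k]_i - [v_k(1)]_i$ for $i\in\Acal_k^1$ and $[w_k]_i = 0$ otherwise, then verify the identity $v_k(1) = -J_k^Tc_k - w_k$ by splitting into the cases $i\in\Acal_k^1$ (where the projection clips to $0$) and $i\notin\Acal_k^1$ (where the projection does not clip, so $[v_k(1)]_i = -[J_k^Tc_k]_i$). Since $w_k$ vanishes outside $\Acal_k^1 \subseteq \Acal(x^\Lcal_j)$, the vector $-v_k(1) = J_k^Tc_k + w_k$ can be written as $[\,J_k^T,\ I_{\Acal(x^\Lcal_j)}^T\,]$ times the stacked vector formed from $c_k$ and $[w_k]_{\Acal(x^\Lcal_j)}$. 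Taking norms and applying the singular-value bound~\eqref{cond3-xlj-xk}, namely $\sigmamin([J_k^T,I_{\Acal(x^\Lcal_j)}^T]^T)\geq\sigmaminL$, together with the fact that the norm of the stacked vector is at least $\|c_k\|_2$, then delivers $\|v_k(1)\|_2 \geq \sigmaminL\|c_k\|_2$, completing the proof.
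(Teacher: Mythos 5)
Your proposal is correct and follows essentially the same route as the paper: both restrict to large $k$ so that Lemma~\ref{lem:results-at-xk} applies and $\|v_k(1)\|_2$ is small relative to $\deltaminL$, use this to show the projection does not clip any coordinate in $\Ical(x^\Lcal_j)$, and then write $-v_k(1) = J_k^Tc_k + w_k$ with $w_k$ supported in $\Acal(x^\Lcal_j)$ to invoke the singular-value bound~\eqref{cond3-xlj-xk}. The only cosmetic differences are your framing of the clipping step as the inclusion $\Acal(x_k+v_k(1))\subseteq\Acal(x^\Lcal_j)$ and your use of the threshold $\tfrac{2}{3}\deltaminL$ in place of the paper's $\tfrac{1}{3}\deltaminL$.
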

\begin{proof}
 With $\deltaminL$ in~\eqref{def:dletaminL}, Lemma~\ref{lem:vk-to-zero-true-limit} ensures the existence $\kbar_1$ such that
\begin{equation}\label{eq:large-k-vk(1)}
\|v_k(1)\|_2 =  \bigl\|\Proj_{\Omega}(x_k - J_k^T c_k) - x_k\bigr\|_2
\le \tfrac{1}{3}\deltaminL \ \ \text{for all $k\geq\kbar_1$.}
\end{equation}
Let $\{\epsminL,\sigmaminL\}\subset\R{}_{>0}$ be as stated in Lemma~\ref{lem:uniform-sigma}, and let $\kbar_2$ play the role of $\kbar$ from Lemma~\ref{lem:results-at-xk}.  
For the remainder of the proof, consider arbitrary $k\geq \max\{\kbar_1,\kbar_2\}$.  It follows from the definition of $\kbar_2$ that $x_k$ satisfies~\eqref{cond1-xlj-xk}--\eqref{cond3-xlj-xk} for some $j\in[\nlim]$.
Using~\eqref{eq:large-k-vk(1)}, \eqref{cond1-xlj-xk}, and definitions of $\deltamin(x^\Lcal_j)$ and $\deltaminL$, each $i \in \Ical(x^\Lcal_j)$ satisfies
\begin{equation*}
\begin{aligned}
\relax[\Proj_{\Omega}(x_k - J_k^T c_k)]_i 
&\geq [x_k]_i - \tfrac{1}{3}\deltaminL 
 \geq [x^\Lcal_j]_i - \tfrac{1}{3}\deltamin(x^\Lcal_j) - \tfrac{1}{3}\deltaminL \\
&\geq \delta_{\min}(x^\Lcal_j) - \tfrac{1}{3}\delta_{\min}(x^\Lcal_j)  - \tfrac{1}{3}\deltaminL 
= \tfrac{2}{3} \delta_{\min}(x^\Lcal_j) - \tfrac{1}{3}\deltaminL \\ 
&\geq \tfrac{2}{3} \deltaminL - \tfrac{1}{3}\deltaminL = \tfrac{1}{3}\deltaminL.
\end{aligned}
\end{equation*}
Hence, for all $i \in \Ical(x^\Lcal_j)$ it holds that $[x_k - J_k^T c_k]_i > 0$. Now, define $w_k \in \R{n}$ as
\begin{equation}\label{eq:multiplier-new}
[w_k]_i =
\begin{cases}
0 & \text{if $i \in \Ical(x^\Lcal_j)$,} \\[3pt]
-[J_k^T c_k]_i - [v_k(1)]_i & \text{if $i \in \Acal(x^\Lcal_j)$.}
\end{cases}
\end{equation}
The definition of $w_k$,  the fact that $[x_k - J_k^T c_k]_i > 0$ for all $i \in \Ical(x^\Lcal_j)$, and~\eqref{cond3-xlj-xk}
give 
\begin{equation*}
\begin{aligned}
\left\|v_k(1)\right\|_2 
&= \left\|\Proj_{\Omega} (x_k - J_k^Tc_k) - x_k\right\|_2
= \left\|-J_k^Tc_k-w_k\right\|_2 \\
&= \left\|\begin{bmatrix}
    J_k^T, I_{\Acal(x^\Lcal_j)}^T 
\end{bmatrix} 
\begin{bmatrix}
    c_k \\
    [w_k]_{\Acal(x^\Lcal_j)}
\end{bmatrix}\right\|_2 \geq \sigmamin^\Lcal \|c_k\|_2,
\end{aligned} 
\end{equation*}
which completes the proof.
\end{proof}

Our next result gives a new bound on the model decrease. 
\begin{lemma}\label{lem:cauchy-decrease-another}
For $\kappa_1 \in (0,1]$ in Lemma~\ref{lem:cauchy-decrease-projection}, all sufficiently large $k\in\N{}$ satisfy
\begin{equation}\label{diff-m-no-assumptions-another}
\|c_k\|_2 - \|c_k + J_k v_k^c\|_2 
\geq \kappa_1(\sigmamin^\Lcal)^2\|c_k\|_2 \min \left\{\frac{1}{1+ \kappaJ^2}, \kappa_v \alpha_k \right\}.
\end{equation}
\end{lemma}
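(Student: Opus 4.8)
The plan is to follow the template of the proof of Lemma~\ref{lem:numerator} almost verbatim, substituting the limit-point constant $\sigmamin^\Lcal$ for the sequential-CQ constant $\sigmamin$ and replacing the lower bound of Lemma~\ref{lem:vk-ck-relation} by its limit-point analogue in Lemma~\ref{lem:vk-ck-relation-limit-licq}. Since that latter bound, $\|v_k(1)\|_2 \geq \sigmamin^\Lcal\|c_k\|_2$, is only guaranteed for all sufficiently large $k$, the conclusion~\eqref{diff-m-no-assumptions-another} will inherit exactly the same qualifier. The degenerate case $\|c_k\|_2 = 0$ is immediate: Lemma~\ref{lem:vk(1)-bound} then forces $\delta_k = 0$ and hence $v_k^c = 0$, so both sides of~\eqref{diff-m-no-assumptions-another} vanish.

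For $\|c_k\|_2 \neq 0$, I would \emph{not} invoke~\eqref{diff-m-no-assumptions} directly, because that inequality has already divided through by $\kappac$ and therefore carries the factor $\|c_k\|_2^2/\kappac$, which cannot be converted into the desired linear factor $\|c_k\|_2$ (indeed $\|c_k\|_2 \leq \kappac$ points the wrong way). Instead I would reproduce the underlying difference-of-squares step: from $\|c_k + J_k v_k^c\|_2 \leq \|c_k\|_2$, a consequence of~\eqref{eq:cauchy-decrease-projection-corollary-1}, one obtains
\begin{equation*}
\|c_k\|_2^2 - \|c_k + J_k v_k^c\|_2^2 \leq 2\|c_k\|_2\big(\|c_k\|_2 - \|c_k + J_k v_k^c\|_2\big),
\end{equation*}
and the left-hand side equals $2(m_k(0) - m_k(v_k^c))$, which by~\eqref{eq:cauchy-decrease-projection-corollary-2} is at least $2\kappa_1\|v_k(1)\|_2^2\min\{\tfrac{1}{1+\|J_k^TJ_k\|_2}, \kappa_v\alpha_k\}$.

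Chaining these and, for all sufficiently large $k$, inserting $\|v_k(1)\|_2^2 \geq (\sigmamin^\Lcal)^2\|c_k\|_2^2$ from Lemma~\ref{lem:vk-ck-relation-limit-licq}, I would divide both sides by $2\|c_k\|_2$; precisely one power of $\|c_k\|_2$ survives, which is the crux of the bookkeeping. A final cosmetic step replaces $\tfrac{1}{1+\|J_k^TJ_k\|_2}$ by the smaller quantity $\tfrac{1}{1+\kappaJ^2}$, legitimate because $\|J_k^TJ_k\|_2 = \|J_k\|_2^2 \leq \kappaJ^2$ by~\eqref{eq:consequence-of-basic-assumption}, which only decreases the right-hand side and yields~\eqref{diff-m-no-assumptions-another}. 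The computation is routine; the only genuine subtlety is keeping the power of $\|c_k\|_2$ correct, and all the substantive work has already been packaged into Lemma~\ref{lem:vk-ck-relation-limit-licq}.
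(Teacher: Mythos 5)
Your proposal is correct and follows essentially the same route as the paper: the difference-of-squares identity, the Cauchy decrease bound \eqref{eq:cauchy-decrease-projection-corollary-2}, the substitution $\|v_k(1)\|_2 \geq \sigmamin^\Lcal\|c_k\|_2$ from Lemma~\ref{lem:vk-ck-relation-limit-licq}, and division by $2\|c_k\|_2$, with the degenerate case handled separately. Your observation that one must not reuse \eqref{diff-m-no-assumptions} (which has already traded a power of $\|c_k\|_2$ for $\kappac$) is exactly the point, and is why the paper also reruns the difference-of-squares step from scratch.
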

\begin{proof}
If $\delta_k = 0$, then either $\|c_k\|_2 = 0$ and the inequality holds trivially, or $\|c_k\|_2 \neq 0$ and the algorithm terminates finitely, which is a contradiction to our overall setting in this subsection that the algorithm does not terminate finitely.  Therefore, we may proceed assuming $\delta_k \neq 0$.  It follows from Lemma~\ref{lem:cauchy-decrease-projection} that  $\|c_k+J_kv_k^c\|_2 \leq \|c_k\|_2$.  Using this inequality and a  difference-of-squares computation, we have that
\begin{equation}~\label{eq:feasibility-reduction-inequality-new-another}
\begin{aligned}
\|c_k\|_2^2 - \|c_k + J_k v_k^c\|_2^2 &= (\|c_k\|_2 + \|c_k + J_k v_k^c\|_2)(\|c_k\|_2 - \|c_k + J_k v_k^c\|_2) \\
&\leq 2\|c_k\|_2 (\|c_k\|_2 - \|c_k + J_k v_k^c\|_2).
\end{aligned}
\end{equation}
Combining~\eqref{eq:feasibility-reduction-inequality-new-another}, \eqref{eq:cauchy-decrease-projection-lemma}, Lemma~\ref{lem:vk-ck-relation-limit-licq}, and~\eqref{eq:consequence-of-basic-assumption}, all sufficiently large $k\in\N{}$ satisfy
\begin{align*}
2\|c_k\|_2(\|c_k\|_2 - \|c_k + J_k v_k^c\|_2) 
&\geq \|c_k\|_2^2 - \|c_k + J_k v_k^c\|_2^2 
= 2(m_k(0) - m_k(v_k^c)) \\ 
&\geq 2\kappa_1 \|v_k(1)\|_2^2 \min \left\{\frac{1}{1+ \|J_k^T J_k\|_2}, \kappa_v \alpha_k\right\} \\
&\geq 2\kappa_1(\sigmamin^\Lcal)^2 \|c_k\|_2^2 \min \left\{\frac{1}{1+ \kappaJ^2}, \kappa_v \alpha_k\right\}.
\end{align*}
If $\|c_k\|_2 = 0$, then again the desired inequality holds trivially.  Otherwise, dividing the above inequality by $2\|c_k\|_2$  gives~\eqref{diff-m-no-assumptions-another}, and thus completes the proof.
\end{proof}

We now bound the merit and proximal parameter sequences away from zero.

\begin{lemma}\label{lem:tau-bounded-another}
Let $\kbar>0$ be sufficiently large that the results in Lemma~\ref{lem:vk-ck-relation-limit-licq} and Lemma~\ref{lem:cauchy-decrease-another} hold.  Then, each $k\geq \kbar$ yields 
\begin{align}
\tauktrial &\geq \taubar_{\min,trial} := \frac{(1-\sigma_c)\kappa_1(\sigmamin^\Lcal)^2 \min \left\{\frac{1}{(1+ \kappaJ^2)\alpha_0} , \kappa_v\right\}}{2(\kappag + \kappa_{\partial r}) \kappav \kappaJ + 2\kappav^2 \kappaJ^2 \kappac} > 0.
\label{bound-tautrial-another-large}
\end{align}
The merit parameter sequence itself satisfies, for all $k\in\N{}$, the inequality 
\begin{align}
\tau_k &\geq \taubar_{\min}:=\min\{\tau_{\kbar-1}, (1-\epsilon_{\tau})\taubar_{\min,trial})\} > 0.\label{bound-tau-another-large}
\end{align}
Finally, the proximal parameter sequence satisfies, for all $k\in\N{}$, the inequality
\begin{equation}\label{alpha-bounded-limit-case}
\alpha_k \geq \alphabarmin 
:= \min\{\alpha_0,\tfrac{\xi\taubar_{\min}}{2(\taubar_{\min}\lipg+\lipJ)}\}
> 0. 
\end{equation}
\end{lemma}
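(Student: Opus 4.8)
The plan is to mirror the three-part structure of Lemma~\ref{lem:tau-bounded} and Lemma~\ref{lem:alpha-bounded}, substituting the limit-point constant $\sigmaminL$ for $\sigmamin$, while carefully accounting for the fact that the trial-parameter bound is only available for $k\geq\kbar$.

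First, I would establish~\eqref{bound-tautrial-another-large} by fixing any $k\geq\kbar$ and splitting on the sign of $A_k$. If $A_k\leq 0$, then $\tauktrial=\infty$ and the bound holds trivially, so I may assume $A_k>0$, in which case $\tauktrial = (1-\sigma_c)(\|c_k\|_2 - \|c_k + J_k s_k\|_2)/(g_k^T s_k + \tfrac{1}{2\alpha_k}\|s_k\|_2^2 + r(x_k+s_k) - r_k)$. For the numerator I would use $J_k u_k = 0$ (see~\eqref{kkt:nullspace}) to write $\|c_k + J_k s_k\|_2 = \|c_k + J_k v_k\|_2$, then the third condition in~\eqref{eq:vk-condition} (which gives $m_k(v_k)\leq m_k(v_k^c)$, hence $\|c_k + J_k v_k\|_2 \leq \|c_k + J_k v_k^c\|_2$) together with Lemma~\ref{lem:cauchy-decrease-another} to bound the numerator below by $\kappa_1(\sigmaminL)^2\|c_k\|_2 \min\{1/(1+\kappaJ^2),\kappa_v\alpha_k\}$. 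For the denominator I would invoke Lemma~\ref{lem:denominator}. Dividing, the factor $\|c_k\|_2$ cancels, and using $\alpha_k\leq\alpha_0$ to replace $\min\{1/(1+\kappaJ^2),\kappa_v\alpha_k\}/\alpha_k$ by its lower bound $\min\{1/((1+\kappaJ^2)\alpha_0),\kappa_v\}$ yields exactly $\taubar_{\min,trial}$, which is positive since $\sigmaminL>0$.

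Second, to obtain~\eqref{bound-tau-another-large} I would argue by induction over $k\geq\kbar-1$, using that $\{\tau_k\}$ is nonincreasing. For $k\leq\kbar-1$, monotonicity gives $\tau_k\geq\tau_{\kbar-1}\geq\taubar_{\min}$. For the inductive step, assume $\tau_{k-1}\geq\taubar_{\min}$ with $k\geq\kbar$. From the update rule~\eqref{eq:tau-update}, if $\tau_{k-1}\leq\tauktrial$ then $\tau_k=\tau_{k-1}\geq\taubar_{\min}$; otherwise $\tau_k=\min\{(1-\epsilon_\tau)\tau_{k-1},\tauktrial\}$, and since $k\geq\kbar$ gives $\tauktrial\geq\taubar_{\min,trial}\geq\taubar_{\min}$, while a strict decrease forces $\tau_{k-1}>\tauktrial\geq\taubar_{\min,trial}$ and hence $(1-\epsilon_\tau)\tau_{k-1}>(1-\epsilon_\tau)\taubar_{\min,trial}\geq\taubar_{\min}$; either way $\tau_k\geq\taubar_{\min}$, closing the induction. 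Finally, \eqref{alpha-bounded-limit-case} follows immediately by applying Lemma~\ref{lem:alpha-bounded} with $\taumin$ replaced by $\taubar_{\min}$, which is now legitimate because $\tau_k\geq\taubar_{\min}$ for all $k$, yielding $\alpha_k\geq\min\{\alpha_0,\xi\taubar_{\min}/(2(\taubar_{\min}\lipg+\lipJ))\}=\alphabarmin$.

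The first and third parts are essentially routine reproductions of earlier arguments, so I expect the delicate point to be the second part: unlike Lemma~\ref{lem:tau-bounded}, the trial-parameter bound holds only for $k\geq\kbar$, so I cannot simply quote the prior reasoning. The induction must absorb the finitely many early iterations into the starting value $\tau_{\kbar-1}$ and rely on the key observation that any strict decrease at an iteration $k\geq\kbar$ begins from $\tau_{k-1}>\taubar_{\min,trial}$, which is precisely what prevents $\tau_k$ from ever falling below $(1-\epsilon_\tau)\taubar_{\min,trial}$.
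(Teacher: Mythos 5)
Your proposal is correct and follows essentially the same route as the paper: the trial-parameter bound is obtained exactly as you describe (splitting on the sign of $A_k$, bounding the numerator via $\|c_k+J_kv_k\|_2\leq\|c_k+J_kv_k^c\|_2$ and Lemma~\ref{lem:cauchy-decrease-another}, the denominator via Lemma~\ref{lem:denominator}), and the last two claims follow from the update rule~\eqref{eq:tau-update} and Lemma~\ref{lem:alpha-bounded}. Your explicit induction for~\eqref{bound-tau-another-large} is simply a careful spelling-out of the step the paper compresses into one sentence, and it correctly handles the only subtlety (the iterations before $\kbar$ being absorbed into $\tau_{\kbar-1}$ by monotonicity).
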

\begin{proof}
We first prove~\eqref{bound-tautrial-another-large}. If $A_k \leq 0$ in the definition of $\tauktrial$, then $\tauktrial = \infty$ so that~\eqref{bound-tautrial-another-large} trivially holds.  If $A_k > 0$, then it follows from the definition of $\tauktrial$, $s_k = v_k + u_k$, $J_k u_k = 0$ (see~\eqref{kkt:nullspace}), Lemma~\ref{lem:denominator},  Lemma~\ref{lem:cauchy-decrease-another}, the fact that $\alpha_k \leq \alpha_0$ for all $k$ by construction of Algorithm~\ref{alg:main}, and~\eqref{eq:consequence-of-basic-assumption} that each $k\geq \kbar$ yields
\begin{align*}
\tau_{k,\text{trial}} 
&= \frac{(1-\sigma_c)(\|c_k\|_2 - \|c_k + J_k v_k\|_2)}{g_k^T s_k + \tfrac{1}{2\alpha_k}\|s_k\|_2^2 + r(x_k+s_k) - r_k} \\
&\geq \frac{(1-\sigma_c)\kappa_1(\sigmamin^\Lcal)^2\|c_k\|_2 \min \left\{\frac{1}{1+ \kappaJ^2}, \kappa_v \alpha_k \right\}}{2(\kappag + \kappa_{\partial r}) \kappav \kappaJ \alpha_k \|c_k\|_2 + 2\kappav^2 \kappaJ^2 \kappac \alpha_k \|c_k\|_2} \\
&\geq \frac{(1-\sigma_c)\kappa_1(\sigmamin^\Lcal)^2 \min \left\{\frac{1}{(1+ \kappaJ^2)\alpha_0} , \kappa_v\right\}}{2(\kappag + \kappa_{\partial r}) \kappav \kappaJ + 2\kappav^2 \kappaJ^2 \kappac},
\end{align*}
which proves that~\eqref{bound-tautrial-another-large} holds for all $k\geq \kbar$, as claimed.  The merit parameter update rule~\eqref{eq:tau-update} and~\eqref{bound-tautrial-another-large} give~\eqref{bound-tau-another-large}.
Finally, \eqref{alpha-bounded-limit-case} follows from~\eqref{bound-tau-another-large} and Lemma~\ref{lem:alpha-bounded}.
\end{proof}



The next result establishes that the norm of the search direction converges to zero along the sequence of successful iterations.
\begin{lemma}\label{lem:sk-to-zero}
The search direction sequence $\{s_k\}_{k \in \Scal}$ satisfies $\lim_{k \in \Scal} \|s_k\|_2 = 0$.
\end{lemma}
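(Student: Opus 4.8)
The plan is to derive a uniform per-step decrease of the shifted merit function along successful iterations and then invoke a standard summability argument, leveraging the fact that both the merit and proximal parameter sequences are already controlled by Lemma~\ref{lem:tau-bounded-another}. First I would fix a successful iteration $k\in\Scal$, so that $x_{k+1} = x_k + s_k$, and translate the acceptance test on Line~\ref{line:check-suff-decrease} into a lower bound on the merit-function decrease. Applying Lemma~\ref{lem:taubar-props}(i) to pass from $\Phi$ to $\Phibar$, and Lemma~\ref{lem:taubar-props}(ii) together with the monotonicity $\tau_{k+1}\leq\tau_k$ of the merit parameter to bound $\Phibar_{\tau_{k+1}}(x_{k+1})\leq\Phibar_{\tau_k}(x_{k+1})$, I obtain
$$
\Phibar_{\tau_k}(x_k) - \Phibar_{\tau_{k+1}}(x_{k+1})
\geq \eta_\Phi\left(\tfrac{\tau_k}{4\alpha_k}\|s_k\|_2^2 + \sigma_c\big(\|c_k\|_2 - \|c_k + J_k s_k\|_2\big)\right).
$$
Because $J_k u_k = 0$ gives $\|c_k + J_k s_k\|_2 = \|c_k + J_k v_k\|_2 \leq \|c_k\|_2$, the feasibility term is nonnegative and may be discarded, leaving the decrease bounded below by $\eta_\Phi\tfrac{\tau_k}{4\alpha_k}\|s_k\|_2^2$.

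Next I would make this bound uniform in $k$. By~\eqref{bound-tau-another-large} in Lemma~\ref{lem:tau-bounded-another} the merit parameter satisfies $\tau_k \geq \taubar_{\min} > 0$, while $\alpha_k \leq \alpha_0$ holds for all $k$ since the proximal parameter is never increased by Algorithm~\ref{alg:main}. Consequently, for every $k\in\Scal$,
$$
\Phibar_{\tau_k}(x_k) - \Phibar_{\tau_{k+1}}(x_{k+1}) \geq \eta_\Phi\tfrac{\taubar_{\min}}{4\alpha_0}\|s_k\|_2^2.
$$

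Finally I would sum over all iterations. By Lemma~\ref{lem:taubar-props}(iii) the sequence $\{\Phibar_{\tau_k}(x_k)\}$ is monotonically decreasing, and it is bounded below by zero since $\Phibar_\tau\geq 0$; hence every consecutive difference is nonnegative and the telescoping sum over all $k$ is bounded by $\Phibar_{\tau_0}(x_0) < \infty$. Restricting this sum to $\Scal$ and using the uniform bound above yields $\sum_{k\in\Scal}\|s_k\|_2^2 \leq \tfrac{4\alpha_0}{\eta_\Phi\taubar_{\min}}\Phibar_{\tau_0}(x_0) < \infty$, which forces $\lim_{k\in\Scal}\|s_k\|_2 = 0$. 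I do not expect a genuine obstacle here, since the essential difficulty---showing that $\{\tau_k\}$ is bounded away from zero---was already resolved in Lemma~\ref{lem:tau-bounded-another}; the only points needing care are verifying that the discarded feasibility term is nonnegative and that passing from the full telescoping sum to the subsequence $\Scal$ is legitimate, both of which hold because every dropped quantity is nonnegative.
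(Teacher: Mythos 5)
Your proposal is correct and follows essentially the same route as the paper: translate the acceptance test into a per-step decrease of the shifted merit function, make it uniform via the lower bound on $\tau_k$ from Lemma~\ref{lem:tau-bounded-another} and $\alpha_k\leq\alpha_0$, and telescope using Lemma~\ref{lem:taubar-props}. The only cosmetic difference is that the paper routes through the split bound $\tfrac{\tau_k}{8\alpha_k}\|s_k\|_2^2$ from \eqref{eq:merit-function-reduction-old} while you keep the factor $\tfrac{\tau_k}{4\alpha_k}$ directly, which changes nothing of substance.
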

\begin{proof}
We first note that the derivation of \eqref{eq:merit-function-reduction-old} still holds under the assumptions of this section, and therefore we know that 
\begin{equation}~\label{eq:merit-function-reduction-stationarity}
\Phibar_{\tau_k}(x_k) - \Phibar_{\tau_{k+1}}(x_{k+1}) \geq \sum_{k \in \Scal} \eta_\Phi \tfrac{\tau_k}{8\alpha_k} \|s_k\|_2^2.
\end{equation}
Using nonnegativity of $\Phibar_\tau$ 
in~\eqref{eq:shifted-merit-function}, 
Lemma~\ref{lem:taubar-props}(ii)-(iii), and~\eqref{eq:merit-function-reduction-stationarity}, we have that 
\begin{equation*}
\begin{aligned}
\infty &> 
\sum_{k \in \Scal}(\Phibar_{\tau_k}(x_k) - \Phibar_{\tau_{k+1}}(x_{k+1})) \geq \sum_{k \in \Scal} \eta_\Phi \tfrac{\tau_k}{8\alpha_k} \|s_k\|_2^2.
\end{aligned}
\end{equation*}
Lemma~\ref{lem:tau-bounded-another} gives $\tau_k \geq \taubar_{\min} > 0$ for all $k\in\N{}$, where $\taubar_{\min}$ is defined in~\eqref{bound-tau-another-large}, so that
$\sum_{k \in \Scal} \eta_{\Phi} \tfrac{\taubar_{\min}}{8\alpha_0} \|s_k\|_2^2 < \infty$,
which implies $\lim_{k \in \Scal} \|s_k\|_2 = 0$, and completes the proof.
\end{proof}

We next prove that the sequence of Lagrange multiplier estimates generated by subproblem~\eqref{subprob:stationarity} during successful iterations are bounded.
\begin{lemma}\label{lem:multiplier-bounded}
There exists $\kappayz\in\R{}_{>0}$ so that $\max_{k\in\Scal}\max\{\|y_k\|_\infty,\|z_k\|_\infty\} \leq \kappayz$.
\end{lemma}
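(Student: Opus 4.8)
The plan is to read the bound directly off the stationarity condition~\eqref{kkt:stationary}. Writing $s_k = v_k + u_k$, that condition becomes $J_k^T y_k + z_k = -b_k$ with $b_k := g_k + \tfrac{1}{\alpha_k}s_k + g_{r,k}$, and the first task is to bound the right-hand side uniformly over $k\in\Scal$. Using $\|g_k\|_2\le\kappag$ and $\|g_{r,k}\|_2\le\kappagr$ from~\eqref{eq:consequence-of-basic-assumption} (valid since $x_k$ and $x_k+s_k$ lie in $\Xcal$ for $k\in\Scal$), the lower bound $\alpha_k\ge\alphabarmin>0$ from~\eqref{alpha-bounded-limit-case}, and the fact that $\{\|s_k\|_2\}_{k\in\Scal}$ is bounded because $\lim_{k\in\Scal}\|s_k\|_2=0$ by Lemma~\ref{lem:sk-to-zero}, I would obtain a constant $\kappa_b>0$ with $\|b_k\|_2\le\kappa_b$ for all $k\in\Scal$.

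The crux is to control which components of $z_k$ can be nonzero. The complementarity condition~\eqref{kkt:complementarity} forces $[z_k]_i=0$ whenever $[x_k+s_k]_i>0$, so $z_k$ is supported on $\Acal(x_k+s_k)$. I would then show that for all sufficiently large $k\in\Scal$ there is an index $j\in[\nlim]$ (the one supplied by Lemma~\ref{lem:results-at-xk}) with $\Acal(x_k+s_k)\subseteq\Acal(x^\Lcal_j)$. To see this, fix $i\in\Ical(x^\Lcal_j)$; then $[x^\Lcal_j]_i\ge\deltamin(x^\Lcal_j)$, and~\eqref{cond1-xlj-xk} gives $[x_k]_i\ge\tfrac{2}{3}\deltamin(x^\Lcal_j)$, so that $[x_k+s_k]_i\ge\tfrac{2}{3}\deltamin(x^\Lcal_j)-\|s_k\|_2$; since $\|s_k\|_2\to0$ along $\Scal$, eventually $\|s_k\|_2\le\tfrac{1}{3}\deltaminL\le\tfrac13\deltamin(x^\Lcal_j)$, whence $[x_k+s_k]_i>0$. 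Thus no index of $\Ical(x^\Lcal_j)$ lies in $\Acal(x_k+s_k)$, giving the claimed inclusion.

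With $A:=\Acal(x^\Lcal_j)$ and $z_k$ supported on $A$, I would rewrite $J_k^Ty_k+z_k=-b_k$ as $[\,J_k^T,\ I_A^T\,]\,w_k=-b_k$ with $w_k:=(y_k,\,[z_k]_A)$. The matrix $[\,J_k^T,\ I_A^T\,]$ is exactly the transpose of $[J_k^T,I_{\Acal(x^\Lcal_j)}^T]^T$, whose smallest singular value is at least $\sigmaminL>0$ by~\eqref{cond3-xlj-xk}; since transposition preserves the nonzero singular values, $[\,J_k^T,\ I_A^T\,]$ has full column rank and satisfies $\|[\,J_k^T,\ I_A^T\,]w\|_2\ge\sigmaminL\|w\|_2$ for every $w$. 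Applying this to $w_k$ yields $\sigmaminL\|w_k\|_2\le\|b_k\|_2\le\kappa_b$, hence $\max\{\|y_k\|_2,\|z_k\|_2\}\le\|w_k\|_2\le\kappa_b/\sigmaminL$ for all large $k\in\Scal$. Because $\|\cdot\|_\infty\le\|\cdot\|_2$ and only finitely many indices of $\Scal$ precede this threshold (each contributing finite $\|y_k\|_\infty$ and $\|z_k\|_\infty$), taking the maximum over that finite set together with $\kappa_b/\sigmaminL$ produces the desired uniform constant $\kappayz$.

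The main obstacle I anticipate is the support inclusion $\Acal(x_k+s_k)\subseteq\Acal(x^\Lcal_j)$: it is what allows me to replace the uncontrolled active set at the trial point by the fixed, LICQ-endowed set $\Acal(x^\Lcal_j)$, and it relies on coupling $\|s_k\|_2\to0$ (Lemma~\ref{lem:sk-to-zero}) with the geometric separation estimate~\eqref{cond1-xlj-xk} of Lemma~\ref{lem:results-at-xk}. Once that inclusion is in hand, the singular-value argument is routine, the only subtlety being to track the transpose carefully so that the relevant matrix is seen to have full \emph{column} rank, thereby bounding $\|w_k\|_2$ from below.
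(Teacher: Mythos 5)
Your proposal is correct and follows essentially the same route as the paper's proof: the same support inclusion $\Acal(x_k+s_k)\subseteq\Acal(x^\Lcal_j)$ obtained by combining Lemma~\ref{lem:sk-to-zero} with the separation estimate~\eqref{cond1-xlj-xk}, the same rewriting of~\eqref{kkt:stationary} as $[J_k^T, I_{\Acal(x^\Lcal_j)}^T]\,(y_k,[z_k]_{\Acal(x^\Lcal_j)}) = -\bigl(g_k + \tfrac{1}{\alpha_k}s_k + g_{r,k}\bigr)$, and the same singular-value lower bound~\eqref{cond3-xlj-xk} to conclude. Your explicit handling of the finitely many indices $k\in\Scal$ below the threshold is a minor point the paper leaves implicit, but otherwise the arguments coincide.
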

\begin{proof}
Let $\kbar_1$ serve the role of $\kbar$ in Lemma~\ref{lem:results-at-xk} so that the results of Lemma~\ref{lem:results-at-xk} hold for each $k \geq \kbar_1$.  Let $\kbar_2$ be sufficiently large so that $\|s_k\|_2 \leq \tfrac{1}{3}\deltaminL$ for all $\kbar_2 \leq k\in\Scal$, which is possible because of how $\deltaminL$ is defined and Lemma~\ref{lem:sk-to-zero}.

For the remainder of the proof, consider an arbitrary $k$ with $\max\{\kbar_1,\kbar_2\} \leq k\in\Scal$.  Let $j\in[\nlim]$ be the value guaranteed by Lemma~\ref{lem:results-at-xk} to exist so~\eqref{cond1-xlj-xk}--\eqref{cond3-xlj-xk} hold. 

Next, consider $i\in\Ical(x^\Lcal_j)$.  It follows from~\eqref{cond1-xlj-xk}, the triangle inequality, the definition of $\kbar_2$, and the definition of $\deltamin^\Lcal$ (see~\eqref{def:dletaminL}) that 
$$
\|x_k+s_k - x^\Lcal_j\|_2 
\leq \|x_k-x^\Lcal_j\|_2 + \|s_k\|_2
\leq \tfrac{1}{3}\deltamin(x^\Lcal_j) + \tfrac{1}{3}\deltamin^\Lcal
\leq \tfrac{2}{3}\deltamin(x^\Lcal_j).
$$ 
This inequality, the definition of $\deltamin(x^\Lcal_j)$ (see~\eqref{def:deltamin}), and $i\in\Ical(x^\Lcal_j)$ imply that 
$$
[x_k+s_k]_i 
\geq [x^\Lcal_j]_i - \tfrac{2}{3}\deltamin(x^\Lcal_j)
\geq \deltamin(x^\Lcal_j) - \tfrac{2}{3}\deltamin(x^\Lcal_j)
= \tfrac{1}{3}\deltamin(x^\Lcal_j) > 0,
$$
so that $i\in\Ical(x_k+s_k)$.  Thus,  $\Ical(x^\Lcal_j)\subseteq\Ical(x_k+s_k)$, or equivalently  $\Acal(x_k+s_k)\subseteq\Acal(x^\Lcal_j)$. 

Now, let us introduce the notation $\Acal^s_k = \Acal(x_k+s_k)$. It follows from $s_k = v_k + u_k$, \eqref{kkt:stationary}, $[z_k]_i = 0$ for all $i\notin\Acal^s_k$ (see~\eqref{kkt:complementarity}), and $\Acal^s_k \subseteq \Acal(x^\Lcal_j)$ (see above) that 
\begin{align*}
 g_k + \tfrac{1}{\alpha_k}s_k + g_{r,k}
 = [J_k^T, I_{\Acal_k^s}^T]\begin{bmatrix}
    y_k \\
    (z_k)_{\Acal_k^s}
\end{bmatrix} 
= [J_k^T, I_{\Acal(x^\Lcal_j)}^T]\begin{bmatrix}
    y_k \\
    (z_k)_{\Acal(x^\Lcal_j)}
\end{bmatrix}.
\end{align*}
Combining this result with~\eqref{cond3-xlj-xk} and $\Acal^s_k \subseteq \Acal(x^\Lcal_j)$ it follows that
$$
\left\|g_k + \tfrac{1}{\alpha_k}s_k + g_{r,k}\right\|_2
\geq \sigmaminL 
\left\|
\begin{bmatrix}
    y_k \\
    (z_k)_{\Acal(x^\Lcal_j)}
\end{bmatrix}
\right\|_2
= \sigmaminL 
\left\|
\begin{bmatrix}
    y_k \\
    z_k
\end{bmatrix}
\right\|_2.
$$
Combining this inequality with the triangle inequality, \eqref{eq:consequence-of-basic-assumption}, $\|s_k\|_2 \leq \tfrac{1}{3}\deltaminL$, and $\alpha_k \geq \alphabarmin$ (see~\eqref{alpha-bounded-limit-case}) it follows that 
$$
\left\|
\begin{bmatrix}
    y_k \\
    z_k
\end{bmatrix}
\right\|_2
\leq \tfrac{1}{\sigmaminL}(\kappag  + \tfrac{\deltaminL}{3\alphabar_{\min}} +
\kappagr).
$$
Since the right-hand side of this inequality is a constant and independent of $k$, we know that the sequence of Lagrange multipliers over the successful iterations is bounded.
\end{proof}




\begin{theorem}\label{thm:KKT}
Let Assumption~\ref{ass:basic} and Assumption~\ref{ass:limit-licq} hold. Any limit point $x_*$ of the sequence $\{x_k\}_{k \in \Scal}$ is a first-order KKT point for problem~\eqref{prob:general}.
\end{theorem}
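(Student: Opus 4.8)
The plan is to fix a limit point $x_*$ of $\{x_k\}_{k\in\Scal}$, select a subsequence $\Kcal\subseteq\Scal$ with $x_k\to x_*$ as $k\to\infty$ over $\Kcal$, and then verify each of the first-order KKT conditions for problem~\eqref{prob:general} by passing to the limit in the trial-step characterization~\eqref{eq:stationarity-old}, namely $-\tfrac{1}{\alpha_k}s_k = g_k+g_{r,k}+J_k^Ty_k+z_k$ together with $\|\min\{x_k+s_k,-z_k\}\|_2=0$.

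First I would establish primal feasibility. Since $x_k\in\Omega$ for all $k$ and $\Omega$ is closed, the limit satisfies $x_*\in\Omega$. For the equality constraint, I would combine Lemma~\ref{lem:vk-to-zero-true-limit}, which gives $\lim_{k\to\infty}\|v_k(1)\|_2=0$, with the lower bound $\|v_k(1)\|_2\geq\sigmaminL\|c_k\|_2$ from Lemma~\ref{lem:vk-ck-relation-limit-licq}, to deduce $\lim_{k\to\infty}\|c_k\|_2=0$; continuity of $c$ then yields $c(x_*)=0$.

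Next I would pass to the limit in the stationarity and complementarity relations. Along $\Scal$ the multiplier estimates $\{y_k\}$ and $\{z_k\}$ are bounded by Lemma~\ref{lem:multiplier-bounded}, and the subgradients $g_{r,k}\in\partial r(x_k+s_k)=\partial r(x_{k+1})$ are bounded by $\kappagr$ by Assumption~\ref{ass:basic} (using $x_{k+1}\in\Xcal$ for $k\in\Scal$). Hence I can extract a further subsequence $\Kcal'\subseteq\Kcal$ along which $y_k\to y_*$, $z_k\to z_*$, and $g_{r,k}\to g_{r,*}$. By Lemma~\ref{lem:sk-to-zero} we have $\|s_k\|_2\to0$ over $\Scal$, so $x_k+s_k\to x_*$; the outer semicontinuity (graph-closedness) of the convex subdifferential $\partial r$ then gives $g_{r,*}\in\partial r(x_*)$. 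Taking the limit in $-\tfrac{1}{\alpha_k}s_k = g_k+g_{r,k}+J_k^Ty_k+z_k$, and using $\alpha_k\geq\alphabarmin>0$ from~\eqref{alpha-bounded-limit-case} (so that $\tfrac{1}{\alpha_k}s_k\to0$) together with continuity of $g$ and $J$, I obtain $g(x_*)+g_{r,*}+J(x_*)^Ty_*+z_*=0$. Taking the limit in $\|\min\{x_k+s_k,-z_k\}\|_2=0$ and using continuity of the componentwise minimum gives $\|\min\{x_*,-z_*\}\|_2=0$.

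Finally I would assemble these facts. The relations $x_*\in\Omega$, $c(x_*)=0$, the stationarity equation with $g_{r,*}\in\partial r(x_*)$, and $\|\min\{x_*,-z_*\}\|_2=0$ (which encodes $z_*\in N_{\Omega}(x_*)$ via the sign and complementarity conditions) are precisely the first-order KKT conditions for~\eqref{prob:general}, completing the proof. The main obstacle is the limit passage for the nonsmooth term: I must carefully argue $x_k+s_k\to x_*$ (combining $x_k\to x_*$ with $\|s_k\|_2\to0$) so that outer semicontinuity of $\partial r$ correctly identifies $g_{r,*}$ as a subgradient \emph{at the limit point} $x_*$, rather than at the shifted points $x_k+s_k$. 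The remaining steps are continuity-based limit passages, legitimized by the uniform lower bound $\alpha_k\geq\alphabarmin$ and the uniform multiplier bounds, both of which are available only along the successful iterations indexed by $\Scal$.
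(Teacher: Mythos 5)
Your proposal is correct and follows essentially the same route as the paper's proof: it uses the same key ingredients (Lemma~\ref{lem:vk-to-zero-true-limit} with Lemma~\ref{lem:vk-ck-relation-limit-licq} for feasibility, Lemma~\ref{lem:multiplier-bounded} for multiplier boundedness, Lemma~\ref{lem:sk-to-zero} and the lower bound $\alpha_k\geq\alphabarmin$ for the stationarity limit, and the complementarity relation from~\eqref{kkt:u}). The only differences are cosmetic --- you pass to the limit in $\|\min\{x_k+s_k,-z_k\}\|_2=0$ directly rather than via Lemma~\ref{lem:snorm-bound}, and you make explicit the outer semicontinuity of $\partial r$ needed to conclude $g_{r,*}\in\partial r(x_*)$, a step the paper leaves implicit.
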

\begin{proof}
Let $\xstar$ be a limit point of $\{x_k\}_{k \in \Scal}$, i.e., there exists infinite $\Kcal_1 \subseteq \Scal$ satisfying $\{x_k\}_{k\in\Kcal_1}\to\xstar$. From Lemma~\ref{lem:vk-to-zero-true-limit} and Lemma~\ref{lem:vk-ck-relation-limit-licq}, we have that \begin{equation}\label{eq:ck-to-zero}
0 = \lim_{k \to \infty} \|v_k(1)\|_2 \geq \lim_{k \to \infty} \sigmaminL\|c_k\|_2 \geq 0,
\end{equation} 
which implies that $0 = \lim_{k \to \infty} \|c_k\|_2 = \lim_{k\in\Kcal_1} \|c_k\|_2$.  Combining this with continuity of $c$ and $\{x_k\}_{k \in \Scal} \to \xstar$ it follows that $c(\xstar) = 0$.

Next, Lemma~\ref{lem:multiplier-bounded} ensures the existence of a vector pair  $(y_*,z_*) \in \R{m}\times\R{n}$ and infinite subsequence $\Kcal_2\subseteq \Kcal_1$ such that $\{(y_k,z_k)\}_{k\in\Kcal_2} \to (y_*,z_*)$. Also, it follows from Lemma~\ref{lem:sk-to-zero} and Lemma~\ref{lem:snorm-bound} that
$$
0 = \lim_{k\in\Kcal_2}\|s_k\|_2 
\geq \lim_{k\in\Kcal_2} \|\min\{x_k,-z_k\}\|_2 \geq 0,
$$
which implies that $\lim_{k\in\Kcal_2} \|\min\{x_k,-z_k\}\|_2 = 0$. Combining this with the continuity of the min operator and $\{(y_k,z_k)\}_{k \in \Kcal_2} \to (y_*,z_*)$ it follows that $\min\{\xstar,-z_*\} = 0$.

It follows from Lemma~\ref{lem:sk-to-zero} and~\eqref{alpha-bounded-limit-case} that $\lim_{k\in\Kcal_2} (1/\alpha_k)\|s_k\|_2 = 0$.  This fact, \eqref{kkt:stationary}, $\{(x_k,y_k,z_k)\}_{k\in\Kcal_2} \to (\xstar,y_*,z_*)$, and continuity of $g$ and $J$ give
$$
g_{r,*} := -g(x_*) - J(x_*)^T y_* - z_* = \lim_{k\in\Kcal_3} ( -g_k - J_k^T y_k - z_k ) = \lim_{k\in\Kcal_3} g_{r,k},
$$
so that $g(x_*) + g_{r,*} + J(x_*)^T y_* + z_* = 0$.  It follows from this equality, $c(\xstar) = 0$, and $\min\{\xstar,-z_*\} = 0$ that $x_*$ is a first-order KKT point for problem~\eqref{prob:general}, as claimed.
\end{proof}

\subsection{Active set Identification}\label{subsec:active.set}
Our result in this section shows, under suitable assumptions, that our method can successfully identify the optimal active set.

\begin{theorem}\label{thm:active-set-id}
Let $\xstar$ be a first-order KKT point for problem~\eqref{prob:general} with Lagrange multiplier vectors $y_*\in\R{m}$ and $z_*\in\R{n}_{\leq 0}$ for the equality constraints and bound constraints, respectively. Suppose that strict complementarity holds, i.e., that $\max\{\xstar,-z_*\} > 0$. Let  $\Scal_1 \subseteq \Scal$ be such that $\{x_k\}_{k \in \Scal_1} \to \xstar$,  $\{s_k\}_{k \in \Scal_1} \to 0$, and $\{z_k\}_{k \in \Scal_1} \to z_*$. Then, $\Acal(x_{k+1}) = \Acal(\xstar)$ for all sufficiently large $k \in \Scal_1$.
\end{theorem}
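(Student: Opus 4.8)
The plan is to reduce everything to the componentwise complementarity satisfied by the subproblem solution, and then to separate the target equality into the two inclusions $\Acal(\xstar)\subseteq\Acal(x_{k+1})$ and $\Acal(x_{k+1})\subseteq\Acal(\xstar)$. The central algebraic fact is that for each $k\in\Scal_1\subseteq\Scal$ we have $x_{k+1}=x_k+s_k=x_k+v_k+u_k$, so the complementarity condition~\eqref{kkt:complementarity} reads, componentwise,
\begin{equation*}
\min\{[x_{k+1}]_i,\,-[z_k]_i\}=0 \quad\text{for all } i\in[n].
\end{equation*}
In particular $[x_{k+1}]_i\geq 0$ and $[z_k]_i\leq 0$ for every $i$, with at least one of the two equal to zero.

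First I would record the consequences of strict complementarity at $\xstar$. Since $\min\{\xstar,-z_*\}=0$ and $\max\{\xstar,-z_*\}>0$ hold componentwise, for each index $i$ exactly one of the following occurs: either $i\in\Ical(\xstar)$ with $[\xstar]_i>0$ and $[z_*]_i=0$, or $i\in\Acal(\xstar)$ with $[\xstar]_i=0$ and $[z_*]_i<0$. Next, from the hypotheses $\{x_k\}_{k\in\Scal_1}\to\xstar$ and $\{s_k\}_{k\in\Scal_1}\to 0$ it follows that $\{x_{k+1}\}_{k\in\Scal_1}\to\xstar$, which together with $\{z_k\}_{k\in\Scal_1}\to z_*$ supplies all the convergence needed below. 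Because $[n]$ is finite, a single threshold works simultaneously for all coordinates.

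For $\Acal(x_{k+1})\subseteq\Acal(\xstar)$ I would argue via the complementary index set $\Ical(\xstar)$: if $i\in\Ical(\xstar)$, then $[\xstar]_i>0$, and since $[x_{k+1}]_i\to[\xstar]_i$ along $\Scal_1$, we have $[x_{k+1}]_i>0$ for all sufficiently large $k\in\Scal_1$, i.e.\ $i\notin\Acal(x_{k+1})$; thus $\Ical(\xstar)\subseteq\Ical(x_{k+1})$, equivalently $\Acal(x_{k+1})\subseteq\Acal(\xstar)$. For the reverse inclusion $\Acal(\xstar)\subseteq\Acal(x_{k+1})$ I would use strict complementarity: if $i\in\Acal(\xstar)$, then $[z_*]_i<0$, and since $[z_k]_i\to[z_*]_i$ we have $-[z_k]_i>0$ for all sufficiently large $k\in\Scal_1$; the componentwise identity $\min\{[x_{k+1}]_i,-[z_k]_i\}=0$ together with $[x_{k+1}]_i\geq 0$ then forces $[x_{k+1}]_i=0$, i.e.\ $i\in\Acal(x_{k+1})$. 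Combining the two inclusions over the finitely many coordinates yields $\Acal(x_{k+1})=\Acal(\xstar)$ for all sufficiently large $k\in\Scal_1$.

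Since all three required limits are provided directly by the theorem's hypotheses, the argument is essentially finite-coordinate bookkeeping, and I do not anticipate a substantive obstacle. The only place demanding genuine care is the use of \emph{strict} complementarity to pin down the active coordinates from the sign of the multiplier estimates $z_k$: without it an index $i$ with $[\xstar]_i=0$ and $[z_*]_i=0$ could lie in an ambiguous boundary regime where neither the iterate values nor the multiplier estimates force membership in $\Acal(x_{k+1})$, so strict complementarity is precisely what makes the second inclusion go through.
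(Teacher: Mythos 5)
Your proposal is correct and follows essentially the same route as the paper's proof: both arguments rest on the subproblem complementarity $\min\{x_k+s_k,-z_k\}=0$ from~\eqref{kkt:complementarity}, use strict complementarity to keep $-[z_k]_i$ bounded away from zero for $i\in\Acal(\xstar)$ (forcing $[x_{k+1}]_i=0$), and use $\{x_{k+1}\}_{k\in\Scal_1}\to\xstar$ to keep $[x_{k+1}]_i>0$ for $i\in\Ical(\xstar)$. The only cosmetic difference is that you organize the conclusion as two set inclusions, whereas the paper states the two componentwise facts directly.
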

\begin{proof}
We have from the optimality conditions in~\eqref{kkt:u} that
\begin{equation}\label{sc-again}
\|\min\{x_k+s_k,-z_k\}\|_2 = 0 \ \ \text{for all $k\in\N{}$.}
\end{equation}
It follows from strict complementarity that $\epsilon := \min\{[-z_*]_j: j \in \Acal(\xstar)\}  > 0$.  Combining this with $\{z_k\}_{k\in\Scal_1} \to z_*$ gives the existence of  $\kbar \in\N{}$ such that $\|z_k - z_*\|_{\infty} < \epsilon/2$ for all $\kbar \leq k \in \Scal_1$. Thus, all $\kbar \leq k \in\Scal_1$ and $j \in\Acal(\xstar)$ satisfy $[-z_k]_j > \tfrac{\epsilon}{2}$. Combining this with~\eqref{sc-again} shows that $[x_{k+1}]_i = [x_k + s_k]_i = 0$ for all $\kbar \leq k\in\Scal_1$ and $i \in \Acal(\xstar)$.  Finally, it follows from $\{x_k\}_{k \in \Scal_1} \to \xstar$ and $\{s_k\}_{k \in \Scal_1} \to 0$ that $[x_{k+1}]_i = [x_k + s_k]_i > 0$ for all $i\notin\Acal(x_*)$ and $k\in\Scal_1$ sufficiently large, which completes the proof.
\end{proof}

\subsection{Manifold Identification}

In this section, we establish a manifold identification property for Algorithm~\ref{alg:main} under certain assumptions. For the definition of a $C^2$-smooth manifold $\Mcal\subset\R{n}$ at a given point in $\R{n}$, see~\cite[Definition~2.3]{lewis2013partial}.  Our result assumes that the regularizer $r$ is partly smooth relative to a manifold at a first-order KKT point; see~\cite[Definition~3.2]{lewis2013partial}.



To motivate our assumption that the regularizer is partly smooth, consider $r(x) = \|x\|_{1}$ and  $x_*\in \R{n} \setminus \{0\}$.
Define the set $\Mcal = \{x \in \R{n}: \text{sgn}(x_i) = \text{sgn}([x_*]_i) \text{ for } i \in \Ical(x_*), \ \text{and} \  x_i=0 \text{ for } i \in \Acal(x_*) \}$, which is a $(|\Ical(x_*)|)$-dimensional $C^2$-smooth manifold around the point $x_*$. Then, $r$ is partly smooth at $x_*$ relative to $\Mcal$. 

We are now ready to present our manifold identification property of Algorithm~\ref{alg:main}. The proof borrows ideas from~\cite[Lemma 1]{lee2023accelerating} and relies on~\cite[Theorem 4.10]{lewis2013partial}.
\begin{theorem}\label{thm:manifold}
Let $x_*$ be a first-order KKT point to problem~\eqref{prob:general} with Lagrange multiplier vectors $y_*$ and $z_*$, and suppose that $r$ is convex and partly smooth at $x_*$ relative to a $C^2$-smooth manifold $\Mcal$. Assume that the proximal parameter sequence $\{\alpha_k\}_{k \in \N{}}$ is bounded away from zero, that there exists a subsequence $\Scal_1 \subseteq \Scal$ such that $\{(x_k,s_k,y_k,z_k)\}_{k \in \Scal_1} \to (x_*,0,y_*,z_*)$, and that the non-degeneracy condition \begin{equation}\label{eq:nondegeneracy}
0 \in \{g(x_*) + J(x_*)^Ty_* + z_*\} + \textnormal{relint}(\partial r(x_*))
\end{equation}
holds, where $\text{relint}$ denotes the relative interior of a convex set.  Then, it follows  that $x_{k+1} \in \Mcal$ for all sufficiently large $k \in \Scal_1$. 
\end{theorem}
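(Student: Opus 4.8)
The plan is to produce, along the subsequence $\Scal_1$, a sequence of subgradients of $r$ at the successful iterates $x_{k+1}$ that converges into the relative interior of $\partial r(x_*)$, and then to invoke the identification result \cite[Theorem 4.10]{lewis2013partial}, following the strategy of \cite[Lemma 1]{lee2023accelerating}.

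First I would extract the relevant subgradient from the subproblem optimality conditions. For $k \in \Scal_1 \subseteq \Scal$ we have $x_{k+1} = x_k + s_k = x_k + v_k + u_k$, so the subgradient $g_{r,k}$ appearing in~\eqref{kkt:u} satisfies $g_{r,k} \in \partial r(x_{k+1})$, and the stationarity relation~\eqref{kkt:stationary} rearranges to
\[
g_{r,k} = -g_k - \tfrac{1}{\alpha_k}s_k - J_k^T y_k - z_k.
\]

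Next I would pass to the limit along $\Scal_1$. Since $\{(x_k,s_k,y_k,z_k)\}_{k\in\Scal_1} \to (x_*,0,y_*,z_*)$, continuity of $g$ and $J$ gives $g_k \to g(x_*)$ and $J_k^T y_k \to J(x_*)^T y_*$, while the assumption that $\{\alpha_k\}$ is bounded away from zero together with $s_k\to 0$ yields $\tfrac{1}{\alpha_k}s_k \to 0$. Hence $g_{r,k} \to g_{r,*} := -g(x_*) - J(x_*)^T y_* - z_*$. The non-degeneracy condition~\eqref{eq:nondegeneracy} states precisely that $-g(x_*) - J(x_*)^Ty_* - z_* \in \textnormal{relint}(\partial r(x_*))$, so $g_{r,*} \in \textnormal{relint}(\partial r(x_*))$.

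Finally I would check the remaining hypotheses of \cite[Theorem 4.10]{lewis2013partial} and conclude. Since $x_k \to x_*$ and $s_k\to 0$ we have $x_{k+1}\to x_*$; because $r$ is finite-valued and convex it is continuous, so $r(x_{k+1}) \to r(x_*)$, and convexity also supplies the prox-regularity and subdifferential continuity the theorem requires. With the subgradients $g_{r,k}\in\partial r(x_{k+1})$ converging to $g_{r,*}\in\textnormal{relint}(\partial r(x_*))$, the identification theorem applies and yields $x_{k+1}\in\Mcal$ for all sufficiently large $k\in\Scal_1$. The argument is largely bookkeeping once $g_{r,k}$ is isolated; the crux—and the step I expect to be the true obstacle—is verifying that the limiting subgradient lands in the \emph{relative interior} rather than merely in $\partial r(x_*)$, which is exactly what the non-degeneracy hypothesis~\eqref{eq:nondegeneracy} provides and what the identification theorem cannot do without.
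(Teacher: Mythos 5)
Your proposal is correct and follows essentially the same route as the paper: extract $g_{r,k}\in\partial r(x_k+s_k)$ from the subproblem optimality conditions~\eqref{kkt:stationary}, show it converges to $\ybar:=-g(x_*)-J(x_*)^Ty_*-z_*\in\textnormal{relint}(\partial r(x_*))$ using continuity, $s_k\to 0$, and $\{\alpha_k\}$ bounded away from zero, verify prox-regularity and continuity of $r$ from convexity, and invoke~\cite[Theorem 4.10]{lewis2013partial}. Your derivation of $\dist(\ybar,\partial r(x_k+s_k))\to 0$ directly from $\|\ybar-g_{r,k}\|_2\to 0$ is in fact a touch cleaner than the paper's two-step triangle-inequality bookkeeping, but it is the same argument.
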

\begin{proof}
Let us define $\ybar = -( g(x_*) + J(x_*)^T y_* + z_*)$, and note from~\eqref{eq:nondegeneracy} that $\ybar\in\textnormal{relint}(\partial r(x_*))$. Next,  since $r$ is convex, it is prox-regular \cite[Definition~3.6]{lewis2013partial} at $x_*$ with $\ybar$.  It also follows from $r$ being convex (thus continuous), $\{x_k\}_{k\in\Scal_1} \to x_*$, and $\{s_k\}_{k\in\Scal_1} \to 0$ that $\{x_k+s_k\}_{k\in\Scal_1} \to x_*$ and $\{r(x_k+s_k)\}_{k\in\Scal_1} \to r(x_*)$. Combining these observations with the assumption in the statement of the theorem that $r$ is partly smooth at $x_*$ relative to a $C^2$-smooth manifold $\Mcal$, means that every assumption in~\cite[Theorem 4.10]{lewis2013partial} holds (with $r$ and $x_*$ here playing the role of $f$ and $\xbar$ in~\cite[Theorem 4.10]{lewis2013partial}). To use~\cite[Theorem 4.10]{lewis2013partial}) to establish our manifold identification result, it remains to prove that $\{\text{dist}\left(\ybar,\partial r(x_k+s_k)\right)\}_{k\in\Scal_1}\to 0$, as we now show.



It follows from the triangle inequality, \eqref{eq:consequence-of-basic-assumption}, and~\eqref{eq.Lipschitz} that
\begin{equation}\label{eq:util.inequality}
\begin{aligned}
& \ \|J(x_k+s_k)^T y_* - J(x_k)^T y_k\|_2  \\
\leq& \ \|J(x_k+s_k)^T y_* - J(x_k)^T y_* + J(x_k)^T y_* -J(x_k)^T y_k\|_2 \\
\leq& \ L_J \|s_k\|_2\|y_*\|_2 
+ \kappa_J \|y_k - y_*\|_2 \ \ \text{for all $k\in\N{}$.}
\end{aligned}
\end{equation}
Using~\eqref{kkt:stationary}, $g_{r,k} \in \partial r(x_k+s_k)$,~\eqref{eq:consequence-of-basic-assumption}, and~\eqref{eq:util.inequality}, we have that 
\begin{equation*}
\begin{aligned}
& \ \text{dist}\left(-g(x_k+s_k) - J(x_k+s_k)^T y_* - z_*,\,\partial r(x_k+s_k)\right) \\
\leq& \ \left\|-g(x_k+s_k) - J(x_k+s_k)^T y_* - z_* -g_{r,k}\right\|_2 \\
=& \ \Big\| g(x_k+s_k)-g(x_k) + \big(J(x_k+s_k)^T y_* - J(x_k)^T y_k\big) + (z_* - z_k) - \tfrac{1}{\alpha_k}s_k \Big\|_2 \\
\leq& \ \|g(x_k+s_k)-g(x_k)\|_2 + \| J(x_k+s_k)^T y_* - J(x_k)^T y_k \|_2 + \|z_* - z_k\|_2 + \tfrac{1}{\alpha_k} \|s_k\|_2 \\
\leq& \ L_g \|s_k\|_2 + L_J \|s_k\|_2\|y_*\|_2 
+ \kappa_J \|y_k - y_*\|_2 + \|z_k - z_*\|_2 + \tfrac{1}{\alpha_k}\|s_k\|_2 \ \ \text{for all $k\in\N{}$.}
\end{aligned}
\end{equation*}
This inequality, $\{(x_k,s_k,y_k,z_k)\}_{k\in\Scal_1} \to (x_*,0,y_*,z_*)$, and $\{\alpha_k\}$ bounded from $0$ give\!
\begin{equation}\label{eq:dist-equivalent}
\{\text{dist}(-g(x_k+s_k) - J(x_k+s_k)^T y_* - z_*, \partial r(x_k+s_k))\}_{k\in\Scal_1} \to 0.
\end{equation}
Next, for all $k\in\N{}$, it follows from \cite[Theorem~6.2]{CurtRobi25}
that
\begin{align*}
& | \text{dist}(\ybar,\partial r(x_k+s_k)) - \text{dist}(-g(x_k + s_k) - J(x_k+s_k)^T y_* - z_*,\partial r(x_k+s_k))| \\ 
&\leq \|\ybar + g(x_k + s_k) + J(x_k+s_k)^Ty_* + z_*\|_2, 
\end{align*}
which immediately implies that
\begin{align*}
\text{dist}(\ybar,\partial r(x_k+s_k)) 
&\leq \text{dist}(-g(x_k + s_k) - J(x_k+s_k)^T y_* - z_*,\partial r(x_k+s_k)) \\
&\phantom{=} + \|\ybar + g(x_k + s_k) + J(x_k+s_k)^Ty_* + z_*\|_2.
\end{align*}
Combining this inequality  with~\eqref{eq:dist-equivalent}, $\{(x_k,s_k,y_k,z_k)\}_{k\in\Scal_1} \to (x_*,0,y_*,z_*)$, and continuity of $g$ and $J$ shows that $\{\text{dist}\left(\ybar,\partial r(x_k+s_k)\right)\}_{k\in\Scal_1}\to 0$, which was our goal. We can now apply~\cite[Theorem 4.10]{lewis2013partial} to conclude that $x_k+s_k \in\Mcal$ for all sufficiently large $k\in\Scal_1$.  Since $x_{k+1} = x_k+s_k$ for all $k\in\Scal_1$, the proof is completed.
\end{proof}

\section{Numerical Results}\label{sec:numerical}
We present results from numerical experiments conducted using our Python implementation of Algorithm~\ref{alg:main}. The test problems employ the $\ell_1$ regularizer, a widely adopted choice to induce sparse solutions. Our numerical evaluation has two primary objectives: to demonstrate the numerical performance of our method using standard optimization metrics, and to assess its capability to correctly identify the zero-nonzero structure of the solution. Our test problems include special instances of $\ell_1$-regularized optimization problems from the  CUTEst~\cite{gould2015cutest} test environment, and instances of sparse canonical correlation analysis.

\subsection{Implementation details}\label{subsec:implementation}



Given $v_k^c$ in~\eqref{eq:beta-condition} as the Cauchy point for subproblem~\eqref{subprob:feasibility}, to find a $v_k$ satisfying the conditions in~\eqref{eq:vk-condition}, we first compute 
\begin{equation}\label{subprob:reformulate-trust-region}
v_k^\infty := \arg\min_{v\in\R{n}} \ m_k(v) \ \ \text{s.t.} \ \|v\|_{\infty} \leq \kappa_v^{\infty} \alpha_k \delta_k,\  x_k + v \in\Omega 
\end{equation}
with  $\kappa_v^\infty\in\R{}_{>0}$, which differs from~\eqref{subprob:feasibility} only in its use of the infinity-norm.  
Our motivation for using subproblem~\eqref{subprob:reformulate-trust-region} is that the feasible region only consists of simple bound constraints, which can be handled efficiently by solvers. As long as $\kappa_v^\infty \leq \tfrac{1}{\sqrt{n}}\kappav$ (which we choose to hold), the solution $v_k^\infty$ to~\eqref{subprob:reformulate-trust-region} satisfies $\|v_k^\infty\|_2 \leq \sqrt{n}\|v_k^\infty\|_\infty \leq \sqrt{n}\kappa_v^\infty \alpha_k\delta_k \leq \kappav\alpha_k\delta_k$, meaning that $v_k^\infty$ satisfies the first two conditions in~\eqref{eq:vk-condition}. To ensure that the third condition is also satisfied, we set
$$
v_k \gets
\begin{cases}
v_k^c & \text{if $m_k(v_k^c) < m_k(v_k^{\infty})$}, \\
v_k^{\infty} & \text{otherwise.}
\end{cases}
$$
To solve subproblem~\eqref{subprob:reformulate-trust-region}, we use the barrier method in Gurobi version 11.0.3~\cite{gurobi}.

Next, to solve subproblem~\eqref{subprob:stationarity} (as needed in Line~\ref{line:uk} of Algorithm~\ref{alg:main}), we exploit the structure of the $\ell_1$-norm. By introducing variables  $(p,q) \in \R{n}_{\geq 0} \times \R{n}_{\geq 0}$ and using $e$ to denote a ones vector of appropriate dimension, we solve the equivalent problem
\begin{equation}~\label{subprob:stationarity-qp}
\begin{aligned}
\min_{(u,p,q) \in \R{n}\times\R{n}\times\R{n}} & \ g_k^T u + \tfrac{1}{2\alpha_k} \|u\|_2^2 + \tfrac{1}{\alpha_k}v_k^Tu + \lambda e^T(p+q) \\
\text{s.t.} & \ J_k u = 0, \ \ x_k + v_k + u \in \Omega, \ \ p\geq 0, \ \ q \geq 0.
\end{aligned}
\end{equation}
Problem~\eqref{subprob:stationarity-qp} is a convex QP that we solve using the dual active-set QP solver in Gurobi. In Algorithm~\ref{alg:main}, the proximal parameter $\alpha_k$ remains unchanged, i.e., $\alpha_{k+1} \gets \alpha_k$ (Line~\ref{line:alpha-same}), whenever the sufficient decreasing condition at Line~\ref{line:check-suff-decrease} is satisfied; in our implementation, we instead update it as $\alpha_{k+1} \gets \max\{\xi^{-1}\alpha_k,10 \}$, which allows the proximal parameter to possibly take larger values.  We found this update strategy to work better in our testing, all of the analysis of Section~\ref{sec:limit-licq} still holds, and the analysis of Section~\ref{sec:sequential-licq} still holds if this modified update is only allowed a finite (possibly large) number of times.

The parameters used and initial proximal parameter value are presented in Table~\ref{tab:alg-params}.  The starting point $x_0$ and initial proximal-parameter value $\alpha_0$ used for the test problems  will be specified in Section~\ref{subsec:cutest}--\ref{subsec:scca}. 

\begin{table}[ht]
\caption{Parameters used by Algorithm~\ref{alg:main}. Recall that $\kappa_v^\infty$ appears in~\eqref{subprob:reformulate-trust-region}.}
\label{tab:alg-params}
\centering
    \begin{tabular}{ccccccccc}
    \hline
    $\tau_{-1}$ & $\kappa_v$ & 
    $\kappa_v^{\infty}$ & $\sigma_c$ & $\epsilon_{\tau}$ & $\xi$ & $\gamma$ & $\eta_{\Phi}$ & $\eta_m$ \\
    \hline
    1 & $10^3$ & $10^{-2}$ & 0.1 & 0.1 & 0.5 & 0.5 & $10^{-4}$ & $10^{-4}$  \\
    \hline
    \end{tabular}
\end{table}

 Algorithm~\ref{alg:main} is terminated when one of the following conditions is satisfied.
\begin{itemize}
\item \textbf{Approximate KKT point.}   Algorithm~\ref{alg:main} is terminated during the $k$th iteration with $x_k$ considered an approximate KKT point if $\|c_k\|_2 \leq 10^{-6}$,  $\|g_k+g_{r,k} + J_k^T y_k + z_k\|_2 \leq 10^{-4}$, and $\|\min \{x_k, -z_k\}\|_2 \leq 10^{-4}$.
\item \textbf{Time limit.} Algorithm~\ref{alg:main} is terminated if the running time exceeds $1$ hour.
\end{itemize}

As is common in the literature, we scale the problem functions.  In particular, the objective and its gradient are scaled by the scaling factor
\begin{equation}\label{eq:scaling}
\text{scale\_factor} =
\begin{cases}
\dfrac{100}{\|\nabla f(x_0)\|_\infty} & \text{if $\|\nabla f(x_0)\|_\infty > 100$,} \\
\qquad 1 & \text{otherwise}.
\end{cases}
\end{equation}
A similar scaling strategy is applied to each constraint $c_i$ for $1\in[m]$.

For comparison, we consider the solver  Bazinga,\footnote{The code package of Bazinga is downloaded from~\url{https://github.com/aldma/Bazinga.jl}} which is a safeguarded augmented Lagrangian method and, to the best of our knowledge, the only open source code that can solve problem~\eqref{prob:general}; see~\cite{de2023constrained} for more details. The Bazinga algorithm is terminated when one of the following conditions is satisfied.
\begin{itemize}
\item \textbf{Approximate KKT point.} Bazinga is terminated if a certain primal feasibility and dual stationarity measure are less than $10^{-6}$.
\item \textbf{Not a number.} Bazinga is terminated if a NaN occurs. 
\item \textbf{Time limit.} Algorithm~\ref{alg:main} is terminated if the running time exceeds 1 hour.
\end{itemize}

\subsection{CUTEst test problems}\label{subsec:cutest} 

We first conduct experiments on a subset of the CUTEst test problems.  Given the objective function $f$, equality constraint $c_E(x) = 0$, inequality constraints $c_l \leq c_I(x) \leq c_u$ for some constant vectors $c_l$ and $c_u$, and bound constraints $b_l \leq x \leq b_u$ for some constant vectors $b_l$ and $b_u$ all supplied by CUTEst for a given test problem, we solve the $\ell_1$-regularized optimization problem
\begin{equation}~\label{prob:cutest-modified}
\begin{aligned}
\min_{(x,s,a) \in \mathbb{R}^{n+m_I+m}} 
\ f(x) + \lambda \|a\|_1 \
\text{s.t.} \ 
\begin{bmatrix}
c_E(x) \\
c_I(x) - s
\end{bmatrix}
+ a = 0, \ \begin{bmatrix}
b_l \\
c_l
\end{bmatrix} \leq \begin{bmatrix}
x \\
s
\end{bmatrix} \leq \begin{bmatrix}
b_u \\
c_u
\end{bmatrix},
\end{aligned}
\end{equation}
where $m_I$ is the number of inequality constraints and $\lambda \in \mathbb{R}_{>0}$ is a regularization parameter. The slack vector $s$ is introduced to reformulate inequality constraints as equality constraints plus bound constraints.  The vector $a$ is introduced in this manner so that we can control its sparsity for illustrative purposes in our experiments.


The subset of CUTEst problems were chosen based on the following selection criteria: (i) the objective function is not constant; (ii) the number of variables and constraints satisfy $1 \leq m \leq n \leq 100$; (iii) the total number of inequality constraints satisfies $m_I \geq 1$. For the choice of $\lambda$, we consider the following optimization problem
\begin{equation} \label{prob:cutest}
\begin{aligned}
\min_{x \in \mathbb{R}^n,\, s \in \mathbb{R}^{m_I}} \quad & f(x) \quad \text{s.t.} \quad
\begin{bmatrix}
c_E(x) \\
c_I(x) - s
\end{bmatrix}
= 0, \quad \begin{bmatrix}
b_l \\
c_l
\end{bmatrix} \leq \begin{bmatrix}
x \\
s
\end{bmatrix} \leq \begin{bmatrix}
b_u \\
c_u
\end{bmatrix},
\end{aligned}
\end{equation}
and let $(\xbar, \bar{s})$ be a first-order KKT point of this problem with Lagrange multiplier $y_{\text{eq}}$ associated with the equality constraints. Then, if $\lambda \geq \|y_{\text{eq}}\|_{\infty}$, the point $(\xbar, \bar{s}, 0)$ is a first-order KKT point for the optimization problem~\eqref{prob:cutest-modified}. With this observation, we set $\lambda = \|y_{eq}\|_{\infty} + 10$ where $y_{eq}$ is computed by solving problem~\eqref{prob:cutest} using IPOPT~\cite{wachter2006implementation}. Problems that are not successfully solved by IPOPT are removed from the test problems.  The final subset consisted of 81 CUTEst test problems.



For our tests, we set $\alpha_0 = 10$ and $x_0$ as the initial point supplied by CUTEst.

We compare the performance of Algorithm~\ref{alg:main} and Bazinga using several metrics; the results of our tests can be found in Table~\ref{tab:comparison-table-modified-cutest-Bazinga}.  The meaning of the columns found in  Table~\ref{tab:comparison-table-modified-cutest-Bazinga} are described in the following bullet points.
\begin{itemize}  
\item \textbf{Feasible.}
The number of test problems for which the corresponding method terminates at a point with constraint violation less than $10^{-6}$. For this metric, we see that the two methods behave similarly, with Algorithm~\ref{alg:main} achieving approximate feasibility on four more test problem.
\item \textbf{Feasible, Better Objective.}
To understand the meaning of this column, let $\fus$ denote the final objective value returned by Algorithm~\ref{alg:main} and $\fBazinga$ denote the final objective value returned by Bazinga.  We then define the relative difference in the returned objective function values as
\begin{equation}~\label{eq:relative-diff-obj}
f_{\text{diff}} 
:=\frac{\fBazinga - \fus}{\max(1,|\min(\fBazinga,\fus)|)}.
\end{equation} 
We say that  Algorithm~\ref{alg:main} (resp., Bazinga) has a better relative objective value if $f_{\text{diff}} \geq 10^{-6}$ (resp., $f_{\text{diff}} \leq -10^{-6}$). Using this terminology, column ``Feasible, Better Objective'' gives the number of test problems for which both algorithms terminated at a point with constraint violation less than $10^{-6}$ \textit{and} the corresponding method has a better relative objective value. For this metric,  Algorithm~\ref{alg:main} outperforms Bazinga on $8$ additional problems.
\item \textbf{Performs Better}.
The number of test problems for which the corresponding method either (i) meets the constraint violation tolerance and the other method does not, or (ii) both methods reach the constraint violation tolerance and the corresponding method has a better relative objective value (see~\eqref{eq:relative-diff-obj}). For this metric, Algorithm~\ref{alg:main} outperforms Bazinga by one problem.
\item $a$ \textbf{is Zero.}
The number of test problems for which the  corresponding method returns $a=0$. Algorithm~\ref{alg:main} outperforms Bazinga on this metric, with Algorithm~\ref{alg:main} (resp., Bazinga) returning $a = 0$ on $76$ (resp., $55$) of the problems.

\item $a$ \textbf{is Small.}
The number of test problems for which the corresponding method returns $\|a\|_\infty \leq 10^{-8}$, thus indicating that $a$ is small (possibly equal to zero). When comparing this column with column ``$a$ is Zero'', we see that 
the only difference is that Bazinga returns a small (nonzero) value for $a$ on one additional test problem; the results for Algorithm~\ref{alg:main} are unchanged. 
\item \textbf{KKT Found.}
The number of test problems for which the corresponding method terminates with an approximate KKT point. Algorithm~\ref{alg:main} outperforms Bazinga with Algorithm~\ref{alg:main} (resp., Bazinga) returning an approximate first-order KKT point on $70$ (resp., $58$) of the problems tested.
\end{itemize}  

\begin{table}[ht!]
\label{tab:comparison-table-modified-cutest-Bazinga}
\centering
\caption{Algorithm~\ref{alg:main} versus Bazinga on various performance metrics related to solving problem~\eqref{prob:cutest-modified}.}
\resizebox{\textwidth}{!}{
\begin{tabular}{|c|c|c|c|c|c|c|c|}
\hline\rule{0pt}{0.8\normalbaselineskip}
Method & Feasible & Feasible, & Performs & $a$ is & 
$a$ is & KKT \\
       &          & Better Objective & Better & Zero  & Small & Found \\
\hline\hline
Algorithm~\ref{alg:main} & 	71 & 13 & 14 & 76 & 76 & 70 \\
\hline
Bazinga & 67 & 5 & 13 & 55 & 56 & 58 \\
\hline
\end{tabular}
}
\end{table}

We conclude this section by comparing the computational times of Algorithm~\ref{alg:main} and Bazinga. Figure~\ref{fig:performance-profile} is a Dolan-Moré performance profile~\cite{dolan2002benchmarking} for timings,  capped at $t=1000$. The results show that Algorithm~\ref{alg:main} (red line) outperforms Bazinga (purple line); see~\cite{dolan2002benchmarking} for details on interpreting this figure.

\begin{figure}[ht!]
\centering
\label{fig:performance-profile}
\includegraphics[width=0.7\linewidth]{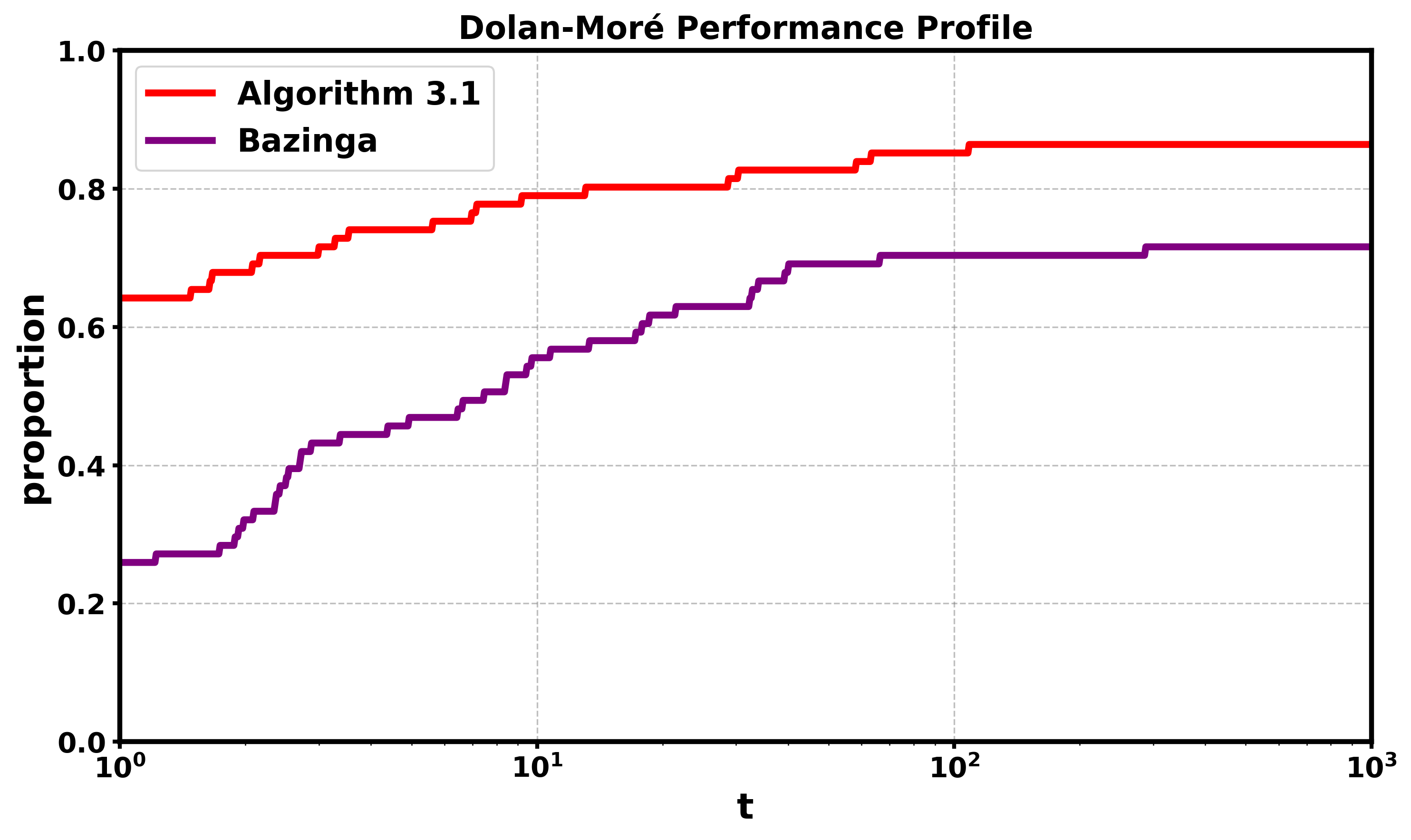}
\caption{More-Dolen performance profile comparing  Algorithm~\ref{alg:main} and Bazinga in terms of wall-clock time on the subset of CUTEst test problems discussed in Section~\ref{subsec:cutest}.}
\end{figure}

\subsection{Sparse canonical correlation analysis (SCCA)}\label{subsec:scca}

We now evaluate the performance of Algorithm~\ref{alg:main} on the SCCA problem~\cite{witten2009penalized} formulated as 
\begin{equation}~\label{prob:scca}
\begin{aligned}
\min_{w_x \in \mathbb{R}^{n_x}, w_y \in \mathbb{R}^{n_y}} & \quad - w_x^T \Sigma_{xy} w_y + \lambda (\|w_x\|_1 + \|w_y\|_1) \\
\text{s.t.} \quad \quad & \quad  w_x^T \Sigma_{xx} w_x \leq 1, \quad w_y^T \Sigma_{yy} w_y \leq 1,
\end{aligned}
\end{equation}
where $\Sigma_{xx} = XX^T$ and $\Sigma_{yy} = YY^T$ represent the covariance matrices for data matrices $X \in \mathbb{R}^{n_x \times N}$ and $Y \in \mathbb{R}^{n_y \times N}$, respectively, and $\Sigma_{xy} = XY^T$ represents the cross-covariance matrix between $X$ and $Y$. Problem~\eqref{prob:scca} aims to identify sparse weight vectors $w_x$ and $w_y$ that maximize the correlation between the transformed views of $X$ and $Y$ while the variance constraints prevent trivial solutions where the weight vectors are arbitrarily scaled to inflate the correlation.

Following the approach of~\cite{chu2013sparse}, we generate synthetic data matrices $X$ and $Y$ as
\begin{equation*}
X = \left( 
  \begin{bmatrix}
  \phantom{-}e \\ -e \\
  \phantom{-} 0
  \end{bmatrix}
  + \xi_x \right) u^T \quad \text{and} \quad Y = \left(
  \begin{bmatrix}
  \phantom{-}0 \\
  \phantom{-}e \\
  -e
  \end{bmatrix}
  + \xi_y \right) u^T,
\end{equation*}
where $e \in \mathbb{R}^{n_x/8}$ represents an all-ones vector, $\xi_x \in \mathbb{R}^{n_x}$ and $\xi_y \in \mathbb{R}^{n_y}$ are noise vectors with entries sampled from $\mathcal{N}(0, 0.01)$, and $u \in \R{N}$ is a random vector with entries $u_i \sim \mathcal{N}(0, 1)$. This construction creates a known ground truth structure: the first $n_x/4$ rows of $X$ are correlated with the last $n_y/4$ rows of $Y$. Consequently, the ideal sparse solutions for $w_x$ and $w_y$ should have non-zero elements confined to the first $n_x/4$ and last $n_y/4$ indices, respectively.

To evaluate the quality of a solution returned by a solver, we compute various metrics: the  correlation coefficient $\rho_{xy}$, sparsity ratio $sr_x$ for vector $w_x$, sparsity ratio $sr_y$ for vector $w_y$, overall sparsity ratio $sr$,
variance bound-constraint violations $voc_x$ and $voc_y$, and sparsity level $sl$, which are defined as
\begin{alignat*}{2}
\rho_{xy} &= \frac{ w_x^T \Sigma_{xy} w_y }{\sqrt{ (w_x^T \Sigma_{xx} w_x) (w_y^T \Sigma_{yy} w_y)}}, &
sr_x &= \frac{n_x - \|w_x\|_0}{n_x}, \\
sr_y &= \frac{n_y - \|w_y\|_0}{n_y}, &
sr &= \frac{(n_x + n_y) - (\|w_x\|_0 + \|w_y\|_0)}{n_x + n_y}, \\
voc_x &= \max\left( w_x^T \Sigma_{xx} w_x - 1, 0\right), &
voc_y &= \max\left( w_y^T \Sigma_{yy} w_y - 1, 0\right), \ \text{and} \\
&& sl &= \|[w_x]_{[n_x/4+1:n_x]}\|_0 + \|[w_y]_{[1:3n_y/4-1]}\|_0. 
\end{alignat*}



We consider SCCA test problems of three different sizes with $n_x = n_y = N \in \{200, 400, 800\}$ and regularization parameters $\lambda \in \{10^{-2}, 10^{-3}, 10^{-4}\}$. For each problem instance, the starting point $x_0$ is obtained by solving the generic canonical correlation analysis problem (no regularization term) using the \texttt{CCA} class from the \texttt{scikit-learn} package. We set the initial proximal parameter as  $\alpha_0 = 10^{-3}$. The algorithm terminates when one of the conditions detailed in Section~\ref{subsec:implementation} is satisfied.

\begin{table}[ht]
\caption{Performance metrics for Algorithm~\ref{alg:main} when solving problem~\eqref{prob:scca}.  Time is measured in seconds.}
\label{tab:scca.results}
\centering
\resizebox{\textwidth}{!}{%
\begin{tabular}{c|c|cccccccc}
\hline
$n_x =n_y$ & $\lambda$ & $\rho_{xy}$ & $sr_x$ & $sr_y$ & $sr$ & $sl$ & $voc_x$ & $voc_y$ & time \\
\hline
\multirow{3}{*}{200} & $10^{-2}$ & 1.0000 & 99.50\% & 99.50\% & 99.50\% & $0$ & 0 & 0 & 76.89 \\
& $10^{-3}$ & 1.0000 & 99.50\% & 99.50\% & 99.50\% &$0$ & 0 & 0 & 87.36 \\
& $10^{-4}$ & 1.0000 & 89.50\% & 90.00\% & 89.75\% & $0$ & 0 & 1.03e-11 & 117.14 \\
\hline
\multirow{3}{*}{400} & $10^{-2}$ & 1.0000 & 99.75\% & 99.75\% & 99.75\% & $0$ & 1.40e-9 & 0 & 128.40 \\
& $10^{-3}$ & 1.0000 & 99.50\% & 99.00\% & 99.25\% & $0$ & 9.83e-11 & 0 & 348.44 \\
& $10^{-4}$ & 1.0000 & 83.50\% & 82.75\% & 83.13\% & $0$ & 9.46e-11 & 1.67e-10 & 226.48 \\
\hline
\multirow{3}{*}{800} & $10^{-2}$ & 1.0000 & 99.88\% & 99.88\% & 99.88\% & $0$ & 5.86e-9 & 3.34e-9 & 279.18\\
& $10^{-3}$ & 1.0000 & 99.63\% & 99.88\% & 99.75\% & $0$ & 6.33e-10 & 1.81e-9 & 899.06 \\
& $10^{-4}$ & 1.0000 & 96.63\% & 95.63\% & 96.13\% & $0$ & 0 & 1.47e-10 & 463.84 \\
\hline
\end{tabular}%
}
\end{table}

\begin{table}[ht]
\caption{Performance metrics for Bazinga when solving problem~\eqref{prob:scca}. Time is measured in seconds.}
\label{tab:scca.bazinga.results}
\centering
\resizebox{\textwidth}{!}{
\begin{tabular}{c|c|cccccccc}
\hline
$n_x =n_y$ & $\lambda$ & $\rho_{xy}$ & $sr_x$ & $sr_y$ & $sr$ & $sl$ & $voc_x$ & $voc_y$ & \text{time} \\
\hline
\multirow{3}{*}{200} & $10^{-2}$ & 1.0000 & 99.50\% & 99.50\% & 99.50\% & $0$ & 4.02e-9 & 3.34e-8 & 86.10 \\
& $10^{-3}$ & 1.0000 & 99.50\% & 99.50\% & 99.50\% & $0$ & 1.96e-8 & 0 & 251.97 \\
& $10^{-4}$ & 1.0000 & 92.00\% & 87.50\% & 89.75\% & $0$ & 0 & 0 & 164.08 \\
\hline
\multirow{3}{*}{400} & $10^{-2}$ & 1.0000 & 99.75\% & 99.75\% & 99.75\% & $0$ & 6.62e-9 & 1.32e-8 & 556.60 \\
& $10^{-3}$ & 1.0000 & 97.50\% & 97.75\% & 97.63\% & $0$ & 0 & 0 & 744.31 \\
& $10^{-4}$ & 1.0000 & 77.75\% & 85.00\% & 81.38\% & $0$ & 0 & 0 & 713.13 \\
\hline
\multirow{3}{*}{800} & $10^{-2}$ & 1.0000 & 98.75\% & 98.38\% & 98.56\% & $0$ & 0 & 2.35e-9 & 2958.89 \\
& $10^{-3}$ & 1.0000 & 88.63\% & 97.25\% & 92.94\% & $0$ & 0 & 2.00e-8 & 2789.95 \\
& $10^{-4}$ & 1.0000 & 81.38\% & 78.75\% & 80.06\% & $0$ & 6.55e-8 & 0 & 2612.26 \\
\hline
\end{tabular}
}
\end{table}

The results in Table~\ref{tab:scca.results} demonstrate the effectiveness of Algorithm~\ref{alg:main} on SCCA problems. First, the correlation coefficient achieves the maximum possible value on every test case. Second, every solution exhibits the correct sparse structure since $sl = 0$. Third, the algorithm produces solutions with varying sparsity levels that are controlled by the regularization parameter $\lambda$, with higher sparsity ratios achieved by larger $\lambda$ values. Finally, constraint violations are smaller than $10^{-9}$. Table~\ref{tab:scca.bazinga.results} reports the performance of Bazinga on the same problems. Notably, Algorithm~\ref{alg:main} attains sparsity ratios that are at least as high as those of Bazinga (sometimes strictly higher), while requiring less computational time.

\section{Conclusion}\label{sec:conclusion}
We presented the first proximal-gradient–type method for regularized optimization problems with general nonlinear inequality constraints. Similar to the traditional proximal-gradient method, we proved that our approach has a convergence result (under an LICQ assumption), a worst-case iteration complexity result (under a stronger assumption), as well as a manifold identification property and active-set identification property (under standard assumptions).

\iftechreport
\appendix

\section{Example of partly smooth function}\label{appendix:partly_smooth}
In this appendix, we provide a concrete example illustrating the concept of partly smooth function. 
Specifically, we show that the $\ell_1$ regularizer is partly smooth relative to a manifold determined by the sign pattern of the reference point.

\begin{proposition}\label{prop:partly_smooth}
Let $r(x) = \|x\|_1$ and let $x_* \in \R{n}$ with active set $\Acal = \{i : [x_*]_i = 0 \}$ and inactive set $\Ical = \{i : [x_*]_i \neq 0 \}$. Define the set $\Mcal$ as
$$
\mathcal{M} = \left\{ x \in \R{n}: \operatorname{sgn}(x_i) = \operatorname{sgn}([x_*]_i) \text{ for } i \in \Ical, \ x_i = 0 \text{ for } i \in \Acal \right\}.
$$
Then, the function $r(x) = \|x\|_{1}$ is partly smooth at $x_*$ relative to the set $\Mcal$.
\end{proposition}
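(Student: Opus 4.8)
The plan is to verify directly the four defining conditions of partial smoothness from \cite[Definition~3.2]{lewis2013partial}, exploiting the polyhedral structure of $r(x)=\|x\|_1$ together with the observation that $\Mcal$ is, locally, an open subset of a coordinate subspace. First I would record that near $x_*$ the set $\Mcal$ coincides with an open neighborhood of $x_*$ (relative to the linear subspace $V := \{x \in \R{n} : x_i = 0 \text{ for all } i \in \Acal\}$) inside $V$; indeed, the conditions $\operatorname{sgn}(x_i) = \operatorname{sgn}([x_*]_i)$ for $i \in \Ical$ are open conditions, while the conditions $x_i = 0$ for $i \in \Acal$ cut out $V$. Hence $\Mcal$ is a $C^2$ (in fact affine) manifold around $x_*$ of dimension $|\Ical|$, with tangent space $T_{\Mcal}(x_*) = V = \operatorname{span}\{e_i : i \in \Ical\}$ and normal space $N_{\Mcal}(x_*) = V^\perp = \{v \in \R{n} : v_i = 0 \text{ for all } i \in \Ical\}$.

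Next I would dispatch the restricted-smoothness and regularity conditions, which are the easy ones. On $\Mcal$ every point has its $\Acal$-coordinates equal to zero and its $\Ical$-coordinates of the same sign as $x_*$, so there $r(x) = \sum_{i \in \Ical} \operatorname{sgn}([x_*]_i)\,x_i$, which is linear and hence $C^2$; this gives restricted smoothness of $r|_{\Mcal}$ near $x_*$. Regularity is immediate because $r = \|\cdot\|_1$ is convex, hence Clarke regular, with $\partial r(x) \neq \emptyset$ at every point.

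The crux is the sharpness condition, namely that the subspace parallel to $\operatorname{aff}(\partial r(x_*))$ equals $N_{\Mcal}(x_*)$. Here I would use the explicit subdifferential $\partial r(x_*) = \{v : v_i = \operatorname{sgn}([x_*]_i) \text{ for } i \in \Ical, \ v_i \in [-1,1] \text{ for } i \in \Acal\}$. Since the $\Ical$-coordinates are fixed while the $\Acal$-coordinates range over the full-dimensional box $[-1,1]^{|\Acal|}$, its affine hull is $\{v : v_i = \operatorname{sgn}([x_*]_i) \text{ for } i \in \Ical\}$, whose parallel subspace is exactly $\{v : v_i = 0 \text{ for } i \in \Ical\}$ — precisely the normal space $N_{\Mcal}(x_*)$ computed above. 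Finally, for the continuity of $\partial r$ restricted to $\Mcal$, I would note that for $x \in \Mcal$ near $x_*$ the sign pattern of the nonzero coordinates is unchanged and the zero coordinates persist, so $\partial r(x) = \partial r(x_*)$ is \emph{locally constant} on $\Mcal$, giving continuity trivially.

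I do not expect a genuine analytic obstacle: because $r$ is polyhedral and $\Mcal$ is affine, every step reduces to coordinate bookkeeping, and the only point requiring care is the sharpness computation, i.e.\ correctly taking the affine hull of $\partial r(x_*)$ and matching its parallel subspace to $N_{\Mcal}(x_*)$. I would also remark that the degenerate case $x_* = 0$ is covered by the same argument, since then $\Ical = \emptyset$, $\Mcal = \{0\}$, $N_{\Mcal}(0) = \R{n}$, and $\partial r(0) = [-1,1]^n$, whose affine hull is all of $\R{n}$, so sharpness again holds.
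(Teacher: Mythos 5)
Your proof is correct and follows essentially the same route as the paper's: it verifies the same four conditions of \cite[Definition~3.2]{lewis2013partial} using the same explicit formulas for $r|_{\Mcal}$, $N_{\Mcal}(x_*)$, and $\partial r(x_*)$, and the same observation that $\partial r$ is locally constant on $\Mcal$ near $x_*$. The only cosmetic difference is that you exhibit $\Mcal$ directly as an open subset of the coordinate subspace $V$ rather than as the zero set of the linear submersion $F(x)=(x_i)_{i\in\Acal}$ as the paper does, and your added remark on the degenerate case $x_*=0$ is a harmless bonus.
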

\begin{proof}
We first need to show that the set $\Mcal$ is a $|\Ical|$-dimensional $C^2$ manifold around the point $x_*$. 
Let $F:\R{n} \to \R{|\Acal|}$ be defined as $F(x) = (x_i)_{i \in \Acal}$, which is a $C^2$ function since $F$ is linear. 
Then $\Mcal$ can be expressed locally as
\[
\Mcal = \{x \in \R{n} : F(x) = 0, \ \operatorname{sgn}(x_i) = \operatorname{sgn}([x_*]_i) \text{ for } i \in \Ical\}.
\]
The Jacobian matrix $\nabla F(x_*)^T$ has full row rank $|\Acal|$ and thus, it is surjective. Finally, it is clear that for all $y$ sufficiently close to $x_*$, $y \in \Mcal$ if and only if $F(y)=0$, which completes the first part of the proof.

We next need to verify four conditions provided in~\cite[Definition~3.2]{lewis2013partial}.

\noindent\textbf{(1) Restricted smoothness.}
For all $x \in \Mcal$ near $x_*$, the sign pattern on $\Ical$ is fixed, and $x_i = 0$ for all $i \in \Acal$. Hence,
$$
r(x) = \|x\|_1 = \sum_{i \in \Ical} |x_i| = \sum_{i \in \Ical} \operatorname{sgn}([x_*]_i)\, x_i.
$$
which implies that $r$ is affine (and thus $C^2$) on $\Mcal$ near $x_*$.

\noindent\textbf{(2) Regularity.} Since the $\ell_1$ norm is a convex function on $\mathbb{R}^n$, it is subdifferentially regular and admits a subgradient at every point in a neighborhood of $x \in M$.

\noindent\textbf{(3) Normal sharpness.}
The normal space of $\Mcal$ at $x_*$ is
$$
N_\Mcal(x_*) = \{ v \in \R{n} : v_i = 0 \text{ for } i \in \Ical \}.
$$
The subdifferential of $r$ at $x_*$ is
$$
\partial r(x_*) = \left\{ v \in \R{n} :
v_i = \operatorname{sgn}([x_*]_i) \text{ for } i \in \Ical,\;
|v_i| \le 1 \text{ for } i \in \Acal \right\}.
$$
The affine span of the subdifferential is
\[
\operatorname{aff}(\partial r(x_*)) = v_0 + \{ v \in \R{n} : v_i = 0 \text{ for } i \in \Ical \},
\]
where
$$
v_0 = 
\begin{cases}
\operatorname{sgn}([x_*]_i), & i \in \Ical; \\
0, & i \in \Acal.
\end{cases}
$$
This implies that the affine span of the subdifferential $\partial r(x_*)$ is the translate of the normal space $N_{\Mcal}(x_*)$ to the set $\Mcal$ at $x_*$ and thus, the normal sharpness condition is satisfied.

\noindent\textbf{(4) Subgradient continuity.}
We need to show that the subdifferential mapping $\partial r(x)$ is continuous at $x_*$ relative to $\Mcal$, i.e., for any sequence $\{x_k\} \subset \Mcal$ with $x_k \to x_*$, the subgradients $v_k \in \partial r(x_k)$ converge to some $v \in \partial r(x_*)$. On the manifold $\Mcal$, the sign pattern of $x_i$ for $i \in \Ical$ is fixed and $x_i = 0$ for $i \in \Acal$. Therefore, for any $x \in \Mcal$ near $x_*$, the subdifferential is  
\[
\partial r(x) = \left\{ v \in \R{n} : v_i = \operatorname{sgn}([x_*]_i) \text{ for } i \in \Ical, \ |v_i| \le 1 \text{ for } i \in \Acal \right\} = \partial r(x_*).
\]  
Hence, for any sequence $x_k \to x_*$ in $\Mcal$, we can choose $v_k = v \in \partial r(x_*)$ for all $k$. Clearly, $v_k \to v \in \partial r(x_*)$, which proves continuity of the subdifferential relative to $\Mcal$.

Now, all three conditions in the definition of partial smoothness hold and thus, the proof is complete.
\end{proof}

\fi





\end{document}